\theoremstyle{plain}
\newtheorem{theorem}{Theorem}[section]
\newtheorem*{theorem*}{Theorem}
\newtheorem{lemma}[theorem]{Lemma} 
\newtheorem{proposition}[theorem]{Proposition} 
\newtheorem{hypothesis}[theorem]{Hypothesis}
\theoremstyle{remark}
\newtheorem{remark}[theorem]{Remark}
\definecolor{Myblue}{cmyk}{1, 1, 0, 0}
\newcommand{\eps}{{\varepsilon}}
\newcommand{\al}{\alpha}
\newcommand{\be}{\beta}
\renewcommand{\d}{\mathrm{d}}
\newcommand{\cupl}{\bigcup\limits}
\newcommand{\suml}{\sum\limits}
\newcommand{\intl}{\int\limits}
\newcommand{\dist}{\mathrm{dist}}
\newcommand{\dom}{\mathrm{dom}}
\newcommand{\N}{\mathbb{N}}
\newcommand{\Z}{\mathbb{Z}}
\newcommand{\R}{\mathbb{R}}
\newcommand{\Zm}{\Z\setminus\N}
\newcommand{\C}{\mathcal{C}}
\renewcommand{\L}{\mathsf{L}^2} 
\newcommand{\W}{\mathsf{W}}
\renewcommand{\C}{\mathsf{C}}
\newcommand{\HS}{V}
\newcommand{\HSk}{\mathbf{V}}
\renewcommand{\H}{\mathcal{H}}  
\newcommand{\h}{\mathfrak{h}} 
\newcommand{\Hk}{\mathbf{H}} 
\newcommand{\hk}{\mathbf{h}} 
\renewcommand{\u}{\mathbf{u}}
\newcommand{\vv}{\mathbf{v}}
\newcommand{\w}{\mathbf{w}}
\newcommand{\ess}{\mathrm{ess}}
\newcommand{\disc}{\mathrm{disc}}
\renewcommand{\aa}{_{\alpha,\infty}}
\newcommand{\aab}{_{\alpha,\beta}}
\newcommand{\I}{\mathcal{I}}
\newcommand{\Id}{\mathrm{I}}
\newcommand{\FF}{\mathcal{F}}
\newcommand{\interval}{{(\ell_-,\ell_+)}}
\newcommand{\ds}{\displaystyle}
\newcommand{\opset}{{\mathcal{O}}}
\begin{document}

\title[Singular Schr\"odinger operators with prescribed spectral properties]{Singular Schr\"odinger operators with prescribed spectral properties}

\author{Jussi Behrndt\,$^1$}
\address{$^1$ Institute of Applied Mathematics, Graz University of Technology, Austria}
\email{behrndt@tugraz.at}

\author{Andrii Khrabustovskyi\,$^{2,3}$}
\address{$^2$ Department of Physics, Faculty of Science, University of Hradec Kr\'alov\'e, Czech Republic}
\address{$^3$ Department of Theoretical Physics,
Nuclear Physics Institute of the Czech Academy of Sciences, \v{R}e\v{z}, Czech Republic} 
\email{andrii.khrabustovskyi@uhk.cz}

\keywords{Schr\"{o}dinger operator, $\delta$-interaction, essential spectrum, discrete spectrum}

\clearpage\maketitle

\numberwithin{equation}{section}

\begin{abstract}
	The paper deals with singular Schr\"odinger operators of the form
	\begin{gather*}
    -{\mathrm{d}^2\over \mathrm{d} x^2 }  + \sum_{k\in\mathbb{Z} }\gamma_k \delta(\cdot-z_k),\quad 
    \gamma_k\in\mathbb{R},
	\end{gather*}
	in $\mathsf{L}^2(\ell_-,\ell_+)$, where $(\ell_-,\ell_+)$ is a bounded interval, and 
	$ \delta(\cdot-z_k)$ is the Dirac delta-function supported at $z_k\in (\ell_-,\ell_+)$.  
	It will be shown that the interaction strengths $\gamma_k$ and the points $z_k$ can be chosen in such a way that the essential spectrum and a bounded part of the discrete spectrum of this self-adjoint operator coincide with prescribed sets on a real line. 	
\end{abstract}

\section{Introduction\label{sec:intro}}

Self-adjoint Laplace and Schr\"odinger operators on bounded domains typically have purely discrete spectrum since in many situations the operator or corresponding form domain is compactly embedded in the underlying $L^2$-space.
In general, however, this is not true, and a well known example is the Neumann Laplacian on a bounded non-Lipschitz domain discussed by
R.~Hempel, L.~Seco, and B.~Simon in \cite{HSS91}. More precisely, for an arbitrary closed set $S\subset [0,\infty)$ 
a bounded domain $\Omega$ was constructed in \cite{HSS91} such that the essential spectrum of the 
Neumann Laplacian $-\Delta_\Omega^N$ on $\Omega$ coincides with the set $S$. In particular, in the case 
$0\in S$ one can use a domain $\Omega$ consisting of a series of ``rooms  and passages'', see Figure~\ref{fig0}. 
These results were further elaborated by R.~Hempel, T.~Kriecherbauer, and P.~Plankensteiner in \cite{HKP97}, where also 
a prescribed bounded part of the discrete spectrum was realized by constructing domain with a certain ``comb'' structure.
Furthermore, in \cite{S92} B.~Simon found a bounded domain of ``jelly roll'' form such that the spectrum of $-\Delta_\Omega^N$ is purely absolutely continuous and covers $[0,\infty)$. Note that in the above situations the peculiar spectral properties of $-\Delta^N_\Omega$  are all 
caused by irregularity of $\partial\Omega$.
Another approach to construct Laplace or Schr\"{o}dinger operators on bounded domains with non-standard spectral properties is to choose ``unusual``
boundary conditions; e.g. $0$ is always an eigenvalue of infinite multiplicity of the Krein-von Neumann realization of  $-\Delta$.
In the abstract setting S.~Albeverio, J.~Brasche, M.~Malamud, H.~Neidhardt, and J.~Weidmann  \cite{B04,BNW93.2,BNW93.1,BN96,BN95,BM99,Br89,ABN98,ABMN05} 
consider a symmetric operator $S$ in a Hilbert space with infinite deficiency indices such that $\sigma(S)$ has a gap, and discuss the possible spectral 
properties of self-adjoint extensions of $S$ in the gap; cf.\cite{ABN98} for applications to the Laplacian. In this context we also refer the reader to the 
recent expository paper \cite{BK21}.

\subsection{Setting of the problem and the main result}

In the present paper we consider (one-dimensional) Schr\"odinger operators with $\delta$-interactions defined by the formal expression
\begin{gather}
\label{delta-formal}
\H_\gamma=-{\d^2\over \d x^2 }  + \suml_{k\in K}\gamma_k \delta(\cdot-z_k);
\end{gather}
here $ \delta(\cdot-z_k)$ is the Dirac delta-function supported at $z_k$, $\gamma_k\in\R$, and $K$ is a countable set.
Such operators can be regarded as so-called \emph{solvable models} in quantum mechanics 
describing the motion of a particle in a
potential supported by a discrete (finite or infinite) set of
points; cf. the monograph \cite{AGHH05} for more details. 
Now assume that all points $z_k$ are contained in a (bounded or unbounded) interval $\interval$.
If the set $K$ is finite the formal expression \eqref{delta-formal} can be realized as a self-adjoint operator in $\L\interval$ 
with the action  
\begin{gather*}
-(u\restriction_{\interval\setminus \cupl_{k\in K}\{z_k\}})''   
\end{gather*}
defined for functions  $u\in \mathsf{H}^2(\interval\setminus \cupl_{k\in K}\{z_k\})$ satisfying 
\begin{gather}\label{delta-conditions}
u(z_k-0) = u(z_k + 0),\quad u'(z_k+0)-u'(z_k-0)=\gamma_k u( z_k\pm 0) 
\end{gather}
at the points  $z_k$
and suitable conditions at the endpoint of $\interval$ (e.g., $u(\ell_-)=u(\ell_+)=0$).
If $K$ is a countable infinite set then the definition of $\H_\gamma$ is more subtle, in particular, if $|z_k-z_{k-1}|\to 0$ as $|k|\to \infty$; 
cf. \cite{AKM10,KM10}. 
If $\interval$ is bounded and $K$ is finite then the spectrum of $\H_\gamma$ is purely discrete, but if 
$K$ is infinite then the essential spectrum of $\H_\gamma$ may be non-empty, even if $\interval$ bounded.

The goal of the present paper is to show that for an arbitrary closed semibounded (from below) set $S_\ess$ one can
construct an operator $\H_\gamma$ of the form \eqref{delta-formal} on a bounded interval  such that $\sigma_\ess(\H_\gamma)=S_\ess$ and, in addition, 
a bounded part of the discrete spectrum can be controlled.
More precisely, assume that we have a {set} $S_{\ess}\subset\R$, a {sequence} of real 
numbers  $S_{\disc}=(s_k )_{k\in\N}$ and a 
bounded interval $(T_1,T_2)\subset\R$ such that
\begin{align}
\label{Sprop1}
& S_{\ess}\text{ is closed and bounded from below},
\\
\label{Sprop2}
& S_{\ess}\cap [T_1,T_2]=\overline{\opset}\text{, where }\opset\subset (T_1,T_2)\text{ is an open set},
\\
\label{Sprop3}
& s_k\in (T_1,T_2)\setminus \overline{\opset},\ \forall k\in\N,
\\
\label{Sprop4}
& s_k\not= s_l\text{ as }k\not=l,
\\
\label{Sprop5}
& \text{all accumulation points of }S_\disc\text{ are contained in }S_\ess.
\end{align}
From \eqref{Sprop1}--\eqref{Sprop5} we conclude that
\begin{gather}\label{Sprop:add1}
\text{each }s_k\text{ has a punctured neighborhood containing no other points of }
S_{\ess}\cup S_{\disc}.
\end{gather}
Also note that in view of \eqref{Sprop2} $S_{\ess}$ has no isolated points in
$[T_1,T_2]$.
\smallskip 

The following theorem is the main result of this paper, see also Theorem~\ref{th:main:pre} for a slightly more rigorous formulation with the formal operator
$\H_\gamma$ replaced by the precisely defined operator $\H\aab$ from Section~\ref{sec:co}.

\begin{theorem}\label{th:main}
There exists a bounded interval $\interval\subset\mathbb{R}$, a sequence of points $(z_k)_{k\in\Z}$ with $z_k\in \interval$ and
a sequence of real numbers $(\gamma_k)_{k\in\Z}$ such that the operator $\H_\gamma$ in $\L\interval$ defined by the formal expression \eqref{delta-formal} satisfies
\begin{gather}\label{main}
	\sigma_\ess(\H_\gamma)=S_\ess,\quad	\sigma_\disc(\H_\gamma)\cap(T_1,T_2)= S_\disc,
\end{gather}
moreover 
\begin{gather}\label{main+}
\text{all eigenvalues at }\sigma_\disc(\H_\gamma)\cap(T_1,T_2)\text{ are simple.}
\end{gather}
\end{theorem}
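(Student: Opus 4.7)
The plan is to realize $\H_\gamma$ as an (approximate) orthogonal sum of ``building blocks'' living on subintervals of $\interval$, each block designed to contribute one prescribed eigenvalue to the global spectrum, and then to assemble these blocks so that finitely many copies implement the points of $S_\disc$ while infinitely many copies, clustering at a countable dense subset $D\subset S_\ess$, produce $S_\ess$ as the essential spectrum. The decoupling between blocks will be effected by inserting strongly repulsive $\delta$-interactions (with couplings $\gamma_k^*\to+\infty$) at the shared endpoints; these play the role of approximate Dirichlet barriers. A Dirichlet--Neumann bracketing argument, combined with norm-resolvent convergence as the barrier strengths are taken large, will allow us to identify the low-energy spectrum of $\H_\gamma$ on $\interval$ with the spectrum of the decoupled orthogonal sum.

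The basic building block is a Schr\"odinger operator with finitely many $\delta$-interactions on a short subinterval $I$ of length $h$, with (approximate) Dirichlet endpoints. For $h$ small, the unperturbed Dirichlet eigenvalues $(\pi n/h)^2$ are all above $T_2+1$. Adding one attractive $\delta$-interaction with coupling $\gamma\in\mathbb{R}$ chosen appropriately pushes a single eigenvalue down to any prescribed target $\lambda\in\mathbb{R}$ by a standard secular-equation argument (monotone, bijective dependence of the ground state on $\gamma$), while the remaining eigenvalues stay uniformly above $T_2+1$ if $h$ is small enough. This gives, for every $\lambda\in\mathbb{R}$ and every threshold, a block $B(\lambda,h)$ realizing $\lambda$ as a simple eigenvalue and with all other spectrum above the threshold. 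To assemble $\H_\gamma$, enumerate $D=\{\lambda_n\}_{n\in\mathbb{N}}$ dense in $S_\ess$, put countably many copies of $B(\lambda_n,h_{n,m})$ with $h_{n,m}\to 0$ into $\interval$ together with one copy of $B(s_k,h_k)$ for each $k$, separate consecutive blocks by a point carrying a large barrier coupling $\gamma_k^*$, and choose the $h$'s and the $\gamma_k^*$'s so that the total length is finite and the subsequent perturbation estimates close.

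For the analysis, the decoupled operator $H_\infty=\bigoplus_k H_k$ (Dirichlet at the barrier points) has essential spectrum equal to the set of accumulation points of its eigenvalues, which by construction is $\overline{D}=S_\ess$, and its discrete spectrum in $(T_1,T_2)$ is exactly $S_\disc$ with all eigenvalues simple (Sturm--Liouville theory inside each block). The hard step is to transfer this exact description from $H_\infty$ to the actual operator $\H_\gamma$: approximate decoupling only gives spectral closeness in a fixed compact window, and one must simultaneously (i) guarantee that no spurious essential-spectrum points arise in $\mathbb{R}\setminus S_\ess$ from the accumulation of small perturbative shifts, (ii) rule out extra discrete eigenvalues in $(T_1,T_2)\setminus S_\disc$ caused by high modes of short blocks or by resonant tunnelling through the barriers, and (iii) recover \emph{exact} (not merely approximate) eigenvalues equal to $s_k$. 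I would address (i)--(ii) by exploiting that below any fixed threshold the resolvent difference between $\H_\gamma$ and $H_\infty$ tends to zero in norm as all $\gamma_k^*\to+\infty$, invoking uniform gap bounds provided by the punctured neighborhoods from \eqref{Sprop:add1}; and (iii) by an implicit-function/continuity adjustment of the local coupling inside each designated block, tuning it so that the perturbed eigenvalue of the coupled system lands precisely at $s_k$ (respectively at $\lambda_n$). Iterating this correction block by block, using that each correction affects the others only by an exponentially small amount determined by the barrier height, yields the desired identities \eqref{main} and the simplicity claim \eqref{main+}.
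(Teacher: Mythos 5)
Your overall architecture --- short subintervals each carrying one interior $\delta$-interaction that tunes the ground state to a prescribed target, large $\delta$-barriers at the shared endpoints acting as approximate Dirichlet decoupling, a decoupled reference operator whose eigenvalue accumulation points realize $S_\ess$ and whose low-lying simple eigenvalues realize $S_\disc$, followed by a correction to land the coupled eigenvalues exactly on $s_k$ --- is precisely the strategy of the paper (rooms, doors and passages translated into intervals $\I_k$, interactions $\al_k$ at the midpoints and $\be_k$ at the endpoints). However, the two hardest steps as you describe them contain genuine gaps.

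For the essential spectrum you argue that the resolvent difference between $\H_\gamma$ and the decoupled operator tends to zero in norm as all barrier strengths $\gamma_k^*\to+\infty$. But the final operator carries a \emph{fixed} sequence of finite barrier strengths, so its resolvent difference from the decoupled operator is a fixed nonzero operator; norm-smallness would only confine $\sigma_\ess(\H_\gamma)$ to a neighborhood of $S_\ess$, which for an arbitrary closed set $S_\ess$ is not equality. The mechanism that actually yields $\sigma_\ess(\H_\gamma)=S_\ess$ is \emph{compactness} of that fixed resolvent difference together with Weyl's theorem; the paper obtains it by factoring the difference as $\Gamma_{\al,\infty}^*\Gamma_{\al,\be}$ and showing $\Gamma_{\al,\be}$ is a norm limit of finite-rank operators, which requires a quantitative decay condition tying $1/\be_k$ to the block lengths (condition \eqref{rho-cond}); your sketch identifies neither compactness as the mechanism nor the needed rate. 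For the exact-tuning step (iii), your block-by-block iteration relies on the cross-influence between blocks being ``exponentially small in the barrier height''; a $\delta$-barrier of strength $\be$ suppresses the coupling only like a power of $1/\be$ (the paper's estimate for the relevant resolvent difference is $O(\be^{-1/2})$), and, more importantly, an infinite simultaneous correction needs a convergence argument you do not supply. The paper avoids this by tuning only finitely many couplings at a time in a partly coupled operator via a multi-dimensional intermediate value theorem (using monotonicity of each eigenvalue in each $\al_k$ from the min-max principle), and then passing to the limit through a diagonal argument combined with norm resolvent convergence of the partly coupled operators to the fully coupled one.
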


We mention that besides \cite{HSS91,HKP97} our research is also inspired by a celebrated paper of Y.~Colin de Verdi\'{e}re \cite{CdV87}, where 
a Riemannian metric $g$ on a given compact manifold $M$ is constructed such that the first $m$ eigenvalues of 
the Laplace-Beltrami operator on $(M,g)$ coincide with prescribed numbers; similar results were also obtained for the Neumann Laplacian 
and regular Schr\"odinger operators.

\subsection{Sketch of the proof strategy}

To construct the operator $\H_\gamma$
satisfying \eqref{main}--\eqref{main+} we utilize ideas of the  aforementioned paper \cite{HSS91}, where 
a bounded domain $\Omega$ was constructed such that the essential spectrum of the Neumann Laplacian $-\Delta_\Omega^N$ 
coincides with a predefined closed set $S\subset[0,\infty)$. If $0\in S$ the domain $\Omega$ consists of a sequence of \emph{rooms} $R_k$ connected by \emph{passages} $P_k$, each room has a \emph{wall} dividing it on two subsets connected via a \emph{door}, see Figure~\ref{fig0}. 
The diameters of $R_k$ and $P_k$ tend to zero as $k\to\infty$ in such a way that their union is a bounded domain. 

\begin{figure}[h]
\centerline{
	\begin{picture}(300,80)
	\includegraphics[width=100mm]{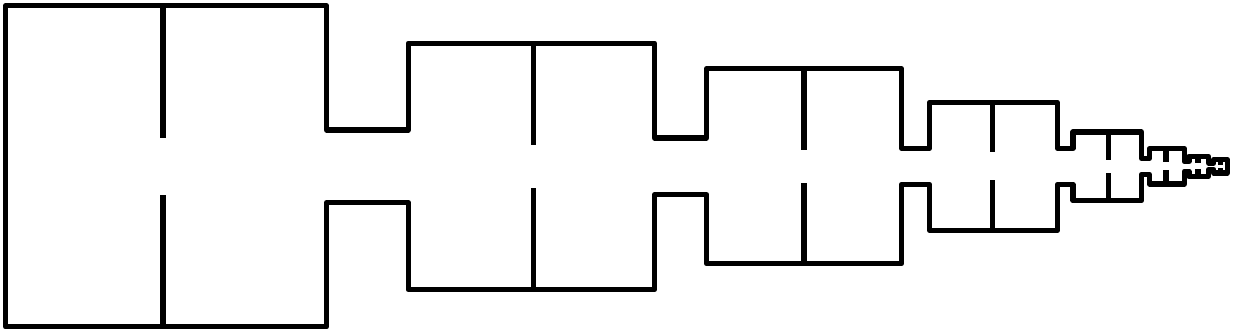}
	\put(-318,0){room}
	\put(-290,4){\vector(3,1){25}}
	\put(-314,33){door}
	\put(-290,36){\vector(1,0){43}}
	\put(-313,48){wall}
	\put(-290,51){\vector(1,0){43}}
	\put(-207,75){passage}
	\put(-200,73){\vector(0,-1){35}}
	\end{picture}}
	\caption{\label{fig0}}
\end{figure}

The strategy of the proof in \cite{HSS91} is as follows:
Choose a sequence $(s_k)_{k\in\N}$ such that 
$S=\{\text{accumulation points of }(s_k)_{k\in\N}\}$ and
consider the ``decoupled'' operator 
\begin{equation*}
\H_{\rm dec}=
\bigoplus_{k\in\N} 
\bigl(
(-\Delta_{R_k}^N) 
\oplus
(-\Delta_{P_k}^{DN})\bigr)
\end{equation*}
in the space 
$\L(\Omega)=\bigoplus_{k\in\N} \bigl(\L(R_k)\oplus\L( P_k)\bigr).$ 
Here $\Delta^N_{R_k}$ is the Neumann Laplacian on $R_k$ and 
 $\Delta^{DN}_{P_k}$ is the Laplacian on $P_k$ subject to the Dirichlet 
conditions on the parts of $\partial P_k$ touching the neighboring rooms and Neumann conditions on the remaining part of $\partial P_k$.
Denote by $(\lambda_j(-\Delta^N_{R_k}))_{k\in\N}$ and  $(\lambda_j(-\Delta^{DN}_{P_k}))_{k\in\N}$ the sequence of eigenvalues of $-\Delta_{R_k}^N$ and 
$-\Delta^{DN}_{P_k}$, respectively, numbered in ascending order with multiplicities taken into account. 
Then one has
\begin{multline}
\label{ess-sp}
\sigma_\ess(\H_{\rm dec})
=
\left\{\text{accumulation points of }(\lambda_j(-\Delta_{R_k}^N))_{j,k\in\N}\right\}\\
\cup
\left\{\text{accumulation points of }(\lambda_j(-\Delta_{P_k}^{DN}))_{j,k\in\N}\right\}.
\end{multline}
We have
\begin{gather*}
\lambda_1(-\Delta_{R_k}^N)=0,\quad
\lambda_1(-\Delta_{P_k}^{DN})=\pi^2/ (\ell(P_k))^{2},
\end{gather*}
where $\ell(P_k)$ is the length of the passage $P_k$. Next, the door in each $R_k$ is adjusted in such a way that 
\begin{gather*}
\lambda_2(-\Delta_{R_k}^N)=s_k
\end{gather*}
and it is not hard to show that 
\begin{gather}\label{lambdaR3}
\lambda_3(-\Delta_{R_k}^N)\geq C/(\mathrm{diam}(R_k))^2, 
\end{gather}
where the constant $C>0$ is the same for all rooms. 
Since $\ell(P_k)\to 0$ and $\mathrm{diam}(R_k)\to 0$  as $k\to\infty$ and $0\in S$, one concludes from \eqref{ess-sp}--\eqref{lambdaR3} that
\begin{gather*}
	\sigma_\ess(\H_{\rm dec})=\{0\}\cup\{\text{accumulation points of }(s_k)_{k\in\N}\}=\{0\}\cup S=S.
\end{gather*}
Finally, if the thickness of the passages $P_k$ tends to zero
sufficiently fast as $k\to\infty$, then the difference of the resolvents of $-\Delta_\Omega^N$ and $\H_{\rm dec}$ is a compact operator, and thus 
$\sigma_\ess(-\Delta_\Omega^N)=\sigma_\ess(\H_{\rm dec})$ by Weyl's theorem.
\smallskip

When constructing the operator $\H_\gamma$ in Theorem~\ref{th:main} we mimic the above idea. 
First of all the sequence $(z_k)_{k\in\Z}$ is split in two interlacing
subsequences $(x_k)_{k\in\Z}$ and $(y_k)_{k\in\Z}$ (see Figure~\ref{fig1}) such that 
$$\suml_{k\in\Z} d_k<\infty,\text{ where }d_k=x_{k}-x_{k-1}$$
is sufficiently small (see \eqref{d:assump1}) and  $y_k$ is the center of the interval $\I_k\coloneqq (x_{k-1},x_k)$. 
We also set
\begin{gather}
\label{Lpm}
\ell_-=-\suml_{k\in\Zm} d_{k}\quad\text{and}\quad \ell_+=\suml_{k\in\N} d_k.
\end{gather}

\begin{figure}[h]
	\centerline{
\scalebox{0.9}{	
	\begin{picture}(470,40)
	\put(0,15){\line(1,0){450}}	
	\put(0,15){\circle*{9}}
	\put(15,15){\circle*{6}}
	\put(30,15){\circle*{6}}
	\put(55,15){\circle*{6}}
	\put(100,15){\circle*{6}}
	\put(150,15){\circle*{6}}
	\put(200,15){\circle*{6}}
	\put(250,15){\circle*{6}}
	\put(300,15){\circle*{6}}
	\put(350,15){\circle*{6}}
	\put(390,15){\circle*{6}}
	\put(420,15){\circle*{6}}
	\put(440,15){\circle*{6}}
	\put(450,15){\circle*{9}}	
	\put(125,15){\circle*{2}}
	\put(175,15){\circle*{2}}
	\put(225,15){\circle*{2}}
	\put(275,15){\circle*{2}}	
	\color{black}	
	\put(-2,22){$\ell_-$}
	\put(90,22){$x_{-2}$}
	\put(140,22){$x_{-1}$}
	\put(195,22){$x_{0}$}
	\put(245,22){$x_{1}$}
	\put(295,22){$x_{2}$}
	\put(445,22){$\ell_+$}	
	\put(117,22){$y_{-1}$}
	\put(170,22){$y_{0}$}
	\put(220,22){$y_{1}$}
	\put(270,22){$y_{2}$}	
	\put(100,5){$\underset{d_{-1}}{\underbrace{\hspace{50pt}}}$}
	\put(150,5){$\underset{d_0}{\underbrace{\hspace{50pt}}}$}
	\put(200,5){$\underset{d_1}{\underbrace{\hspace{50pt}}}$}
	\put(250,5){$\underset{d_2}{\underbrace{\hspace{50pt}}}$}	
	\end{picture}}}
	\caption{\label{fig1}}
\end{figure}

For our purposes it is convenient to change the notation for the interaction strengths $\gamma_k$ as follows: at the points $y_k$ they will be denoted  by $ \al_k$,  at the points $x_k$ they will be denoted by $\be_k$, and instead of $\H_\gamma$ we will use the notation $\H_{\al,\be}$ for the Schr\"{o}dinger
operator. Now, roughly speaking, 
the intervals $\I_k $  play  the role of the rooms, the interactions at the points $x_k$  play  the role of the passages, and  the interactions at the points $y_k$   play  the role of the doors.
The desired operator is constructed in three steps.
\medskip

\noindent \textbf{1) Decoupled operator.} We start from the case $\beta_k=\infty$ for all $ k\in\Z$, which corresponds to Dirichlet decoupling at the points $x_k$. In other words, we treat the operator	
$$\H_{ \al,\infty}=\bigoplus_{k\in\Z} \Hk_{ \al_k,\I_k}\quad\text{in}\quad \L\interval=\bigoplus_{k\in\Z}\L(\I_k),$$ 
where $\Hk_{ \al_k,\I_k}$ is an operator in $\L(\I_k)$ (formally) defined by the differential expression 
\begin{gather*}
-{\d^2\over \d x^2 }+\al_k\delta(\cdot-y_k) 
\end{gather*}
and Dirichlet boundary conditions at the endpoints of $\I_k$. Recall that a sequence $S_\disc=(s_k)_{k\in \N}$ is already given and, in addition, 
we assign to $S_\ess$ a sequence $(s_k)_{k\in \Zm}$ such that
\begin{gather}\label{Sess:acc}
S_{\ess}=\left\{\text {accumulation points of }(s_k)_{k\in \Zm}\right\}.
\end{gather}

If $d_k$ are sufficiently small (see the second condition in \eqref{d:assump1}),
one can choose the constants
$ \al_k$ in such that $\lambda_1( \Hk_{ \al_k,\I_k})=s_k$ and, 
moreover,
$\lambda_2( \Hk_{ \al_k,\I_k})=\left({2\pi_k/ d_k}\right)^2$ for all $\al_k\in\R$.
Therefore, since $d_k\to 0$ as $|k|\to\infty$ and \eqref{Sprop5} holds,
we conclude
\begin{gather}
\label{acc-acc1}
\begin{split}
\sigma_\ess(\H_{\al,\infty})&
=\{\text{accumulation points of }(s_k)_{k\in\Z}\} 
\\ 
& 
=\{\text{accumulation points of }(s_k)_{k\in\Zm}\}=S_\ess.
\end{split}
\end{gather}
Similarly, if $\max_{k\in\Z}d_k$ is sufficiently small (see the first conditions in \eqref{d:assump1}), we obtain
\begin{gather*}
\sigma_\disc(\H_{\al,\infty})\cap(T_1,T_2)=
S_\disc.
\end{gather*}
Moreover, due to \eqref{Sprop4}, 
\begin{gather}\label{acc-acc3}
\text{all eigenvalues in }\sigma_\disc(\H_{\al,\infty})\cap(T_1,T_2)\text{ are simple.} 
\end{gather}
Thus, the decoupled operator $\H\aa$ satisfies \eqref{main} and \eqref{main+} in Theorem~\ref{th:main}. However, this is not the desired 
singular Schr\"{o}dinger operator as we have Dirichlet conditions at the points $x_k$, $k\in\Z$.
\medskip

\noindent 
\textbf{2) Partly coupled operator.} 
Let $\be=(\beta_k)_{k\in\N}$ be a sequence of real numbers.  
For $n\in\N $ we denote by $\H_{ \al,\be}^n$ 
the operator, which is obtained from $\H_{ \al,\infty}$ by ``inserting'' $\delta$-interactions  of strengths  $\beta_k $ at \emph{finitely many} points $x_k$, $k\in\Z\cap [-n+1,n-1]$.  
Then one has
\begin{gather}
\label{ess:n}
\forall n\in\N:\quad 
\sigma_\ess(\H\aab^n)=\sigma_\ess(\H_{ \al,\infty}),
\end{gather}
one can also  guarantee  that
the discrete spectrum of 	$\H_{ \al,\be}^n$ within $(T_1,T_2)$ changes slightly
provided $\beta_k$ are sufficiently large. 
More precisely, for an arbitrary sequence of positive numbers $(\delta_k)_{k\in\N}$
such that the neighbourhoods $[s_k-\delta_k,s_k+\delta_k]$ are pairwise disjoint and belong to $(T_1,T_2)\setminus\overline{\opset}$  one has
\begin{gather}
\label{disc:n12}
\begin{array}{l}
\sigma_{\disc}(\H\aab^n)\cap (T_1,T_2)\, \subset\, \cupl_{k\in\N}  [s_k-\delta_k,s_k+\delta_k]
\\[1mm]	
\text{and each }[s_k-\delta_k,s_k+\delta_k]\text{ contains precisely one simple eigenvalue}
\end{array}
\end{gather}
provided the entries of the sequence $\beta$ are large enough (independent of $n$).
It is important that the properties \eqref{acc-acc1} and \eqref{disc:n12} remain valid if the coefficents $\al_k$, $k\in\N$, chosen on the first step 
are slightly perturbed.

Now, we \emph{fix} a sequence $\be$ for which \eqref{disc:n12} and some additional conditions for $\beta_k$ as $|k|\to\infty$ hold; see the next step. Then one can show that for each $n\in\N$ there exist $\al_k$, $k\in\N$, that in fact
\begin{gather}
\label{disc:n3}
\begin{array}{l}
\text{for $k\in\Z\cap[1,n]$ the eigenvalue  of $\H\aab^n$ in $ [s_k-\delta_k,s_k+\delta_k]$ \textit{coincides} with $s_k$.} 
\end{array}
\end{gather}
The proof of this fact is based on a  multi-dimensional version
of the intermediate value theorem proved in \cite{HKP97}. We denote the sequence $\al$  for which \eqref{disc:n3} holds by $\al^n$.
\medskip

\noindent 
\textbf{3) Fully coupled operator.} 
Let $\al^n=(\al_k^n)_{k\in\Z}$, $n\in\N$, be the sequences from above (see the end of the previous step).
Using a standard diagonal process one concludes that there exists a sequence $\al=(\al_k)_{k\in \Z}$ such that for each $k\in\Z$
one has $\al_k^n\to\al _k$ as $n\to\infty$ (probably, up to a subsequence which is independent $k$).
As a result we get the operator $\H\aab$; it is obtained from $\H_{ \al,\infty}$ by ``inserting'' $\delta$-interactions   of the strengths  $\beta_k $ at 
\emph{all} points $x_k$, $k\in\Z$.
We prove that, if $\lim_{k\to\infty}\be_k=\infty$ and this convergence is fast enough, then also
$\sigma_\ess(\H\aab )=\sigma_\ess(\H_{ \al,\infty})$. Hence, due to \eqref{acc-acc1}, we get
\begin{gather*}
\sigma_\ess(\H\aab)=S_\ess.
\end{gather*}
Moreover, we show that
\begin{gather}
\label{nrc:n}
\H_{\al^n,\beta}^n\text{ converges to }\H_{\al ,\beta} 
\text{ in the norm resolvent sense  as }n\to\infty.
\end{gather}
Using \eqref{ess:n}--\eqref{nrc:n} we arrive at
\begin{gather*}
\sigma_\disc(\H_{\al ,\beta} )\cap(T_1,T_2)=S_\disc.
\end{gather*}
Thus the operator $\H\aab$ satisfies \eqref{main} and \eqref{main+}; we have proved Theorem~\ref{th:main}.

\subsection{Schr\"odinger operators with $\delta'$-interactions}

We mention that a result similar to Theorem~\ref{th:main} can also be proved for singular Sch\"odinger operators with 
$\delta'$-interactions of the form
\begin{gather}\label{delta'}
\H_\gamma'=
-{\d^2\over \d z^2}+\suml_{k\in K}\gamma_k\langle\cdot\,,\,\delta_{z_k}'\rangle\delta_{z_k}',
\end{gather}
where $K\subset\Z$, $\delta_{z_k}'$ is the distributional derivative of the  delta-function supported at $z_k\in\R$,   $\langle\phi,\delta_{z_k}'\rangle$ denotes its action 
on the test function $\phi$, and $\gamma_k\in \R\cup\{\infty\}$. In contrast to \eqref{delta-conditions} here the functions $u$ in the operator domain satisfy 
\begin{gather*}
u'(z_k -0) = u'(z_k  + 0),\quad u(z_k +0)-u(z_k -0)=\gamma_k u'(z_k\pm 0 ).
\end{gather*}

For the rigorous mathematical treatment of $\delta'$-interactions we refer to the standard monograph \cite{AGHH05}. Among the subsequent contributions  we  mention the  papers \cite{KM10,KM14}  dealing 
with the more subtle case $|z_k-z_{k-1}|\to 0$ as $|k|\to \infty$.  

One has the following counterpart 
of Theorem~\ref{th:main}. 

\begin{theorem}\label{th:main+}
Assume that the set $S_\ess\subset\R$, the sequence of real numbers $S_\disc=(s_k)_{k\in\N}$ and the interval $(T_1,T_2)\subset\R$ satisfy the conditions \eqref{Sprop1}--\eqref{Sprop5}. Moreover, let $0\in S$.
Then 
there exists a bounded interval $\interval\subset\mathbb{R}$, a sequence of points $(z_k)_{k\in\Z}$ with $z_k\in \interval$, and
a sequence of real numbers $(\gamma_k)_{k\in\Z}$ such that the operator $\H_\gamma'$ in $\L\interval$ defined by the formal expression \eqref{delta'} satisfies  the properties \eqref{main}--\eqref{main+}.
\end{theorem}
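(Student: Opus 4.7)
The plan is to mirror the three-step strategy of the proof of Theorem~\ref{th:main}, with $\delta$-interactions systematically replaced by $\delta'$-interactions. The decisive structural change is that taking $\beta_k=\infty$ in \eqref{delta'} now imposes \emph{Neumann} decoupling at $x_k$: the matching condition $u(x_k+0)-u(x_k-0)=\beta_k u'(x_k\pm 0)$ forces $u'(x_k\pm 0)=0$ as $\beta_k\to\infty$, in contrast to the Dirichlet decoupling of the $\delta$-case. This is precisely why the additional hypothesis $0\in S_\ess$ is needed: on every decoupled room the constant function is an eigenfunction with eigenvalue $0$, so this value is automatically forced into the essential spectrum of the decoupled operator.

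First, one analyses the decoupled operator $\H'_{\al,\infty}=\bigoplus_{k\in\Z}\Hk'_{\al_k,\I_k}$, where $\Hk'_{\al_k,\I_k}$ is the Neumann realisation on $\I_k=(x_{k-1},x_k)$ carrying a single $\delta'$-interaction of strength $\al_k$ at the midpoint $y_k$. A decomposition by symmetry about $y_k$ computes the spectrum almost explicitly: symmetric eigenfunctions are insensitive to the interaction and contribute eigenvalues $\{(2\pi j/d_k)^2:j\in\N\cup\{0\}\}$, whereas antisymmetric ones reduce to a Neumann--Robin problem on the half-room $(y_k,x_k)$ with boundary condition $2u(y_k)=\al_k u'(y_k)$. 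It follows that each $\Hk'_{\al_k,\I_k}$ has an eigenvalue $0$ (the constant eigenfunction, for every $\al_k\in\R$) together with a further tunable low-lying eigenvalue which, by a suitable choice of $\al_k$, equals the prescribed number $s_k$; all other eigenvalues are bounded below by $C/d_k^2$ with $C$ independent of $\al_k$. Choosing an auxiliary sequence $(s_k)_{k\in\Zm}$ with \eqref{Sess:acc} and using $0\in S_\ess$ together with $d_k\to 0$ one obtains
\begin{gather*}
\sigma_\ess(\H'_{\al,\infty})=\{0\}\cup S_\ess=S_\ess,\qquad \sigma_\disc(\H'_{\al,\infty})\cap(T_1,T_2)=S_\disc,
\end{gather*}
and the simplicity of the discrete eigenvalues required by \eqref{main+} is inherited from \eqref{Sprop4}.

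Next, one forms the partly coupled operator ${\H'}^n_{\al,\be}$, obtained from $\H'_{\al,\infty}$ by switching on $\delta'$-interactions of finite strength $\be_k$ at the finitely many points $x_k$ with $|k|<n$. Since this is a finite-rank perturbation in the resolvent, $\sigma_\ess$ is preserved; and if $|\be_k|$ are large enough, a continuity argument as in Step~2 of the proof of Theorem~\ref{th:main} confines every eigenvalue of ${\H'}^n_{\al,\be}$ in $(T_1,T_2)$ to a predetermined small neighbourhood of the corresponding decoupled eigenvalue. Once $\be$ is \emph{fixed} with sufficient growth of $|\be_k|$ as $|k|\to\infty$, the multi-dimensional intermediate value theorem from \cite{HKP97} lets one readjust $\al_1,\dots,\al_n$ so that those neighbourhood eigenvalues are exactly $s_1,\dots,s_n$; denote the adjusted finite sequence by $\al^n$. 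A standard diagonal extraction then produces a limit $\al=(\al_k)_{k\in\Z}$ with $\al^n_k\to\al_k$ for every fixed $k$.

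The hard part, as in Theorem~\ref{th:main}, will be the passage to the fully coupled operator $\H'_{\al,\be}$: one must prove, for a sufficiently fast growth of $|\be_k|$, both (i) $\sigma_\ess(\H'_{\al,\be})=\sigma_\ess(\H'_{\al,\infty})$ and (ii) norm resolvent convergence ${\H'}^n_{\al^n,\be}\to\H'_{\al,\be}$ as $n\to\infty$. The principal difficulty specific to the $\delta'$-setting is that $|\be_k|\to\infty$ realises Neumann rather than Dirichlet decoupling at $x_k$, so the trial functions, quadratic-form bounds and compactness arguments that underlie Step~3 for $\H\aab$ must be reproduced with Neumann-type building blocks, and the quantitative comparison between ${\H'}^n_{\al^n,\be}$ and $\H'_{\al,\be}$ must be carried out with careful control of the constants in terms of $(d_k)$ and $(\be_k)$; the analytical framework for $\delta'$-interactions with $|z_k-z_{k-1}|\to 0$ developed in \cite{AGHH05,KM10,KM14} should supply the required form-boundedness and resolvent identities. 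Granted (i) and (ii), the conclusions \eqref{main}--\eqref{main+} for $\H'_{\al,\be}$ follow along exactly the same lines as for $\H\aab$ in the proof of Theorem~\ref{th:main}.
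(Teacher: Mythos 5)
Your proposal follows essentially the same route as the paper, which itself only sketches this proof: Neumann decoupling at the points $x_k$ in the $\beta_k=\infty$ limit, the forced eigenvalue $0$ on each room explaining the extra hypothesis $0\in S_\ess$, tuning of $\al_k$ so that the first nonzero eigenvalue of $\Hk'_{\al_k,\I_k}$ equals $s_k$ with the remaining eigenvalues pushed above $\pi^2 d_k^{-2}$, and then the same partly-coupled/intermediate-value/diagonal-limit machinery as for Theorem~\ref{th:main}. Your symmetric/antisymmetric decomposition of the single-room operator and your honest flagging of the Step-3 estimates (for which the paper defers to the quantitative condition (3.18) in \cite{BK21}) are consistent with the paper's treatment.
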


The proof of the above theorem is similar (except for some technical details) to the proof of Theorem~\ref{th:main}, therefore we only 
sketch it briefly. Note that in \cite[Section~3]{BK21} the	``$\sigma_\ess$-part'' of Theorem~\ref{th:main+} was already shown (if $S\subset[0,\infty)$).
Again we split the sequence   $(z_k)_{k\in\Z}$ in \eqref{delta'} in two interlacing
subsequences $(x_k)_{k\in\Z}$ and $(y_k)_{k\in\Z}$, where $y_k$ is in the center of $\I_k=(x_{k-1},x_k)$.
Instead of $\gamma_k$ we denote the interaction strengths at the points $x_k$ by $\be_k$ and at the points $y_k$ by $\al_k$.
We shall write $\H_{\al,\be}'$ instead of $\H_\gamma'$ for the corresponding Schr\"{o}dinger operator.

In the first step we set  $\beta_k=\infty$, which corresponds to Neumann decoupling at the points 
$x_k$, and we consider 
$$\H_{ \al,\infty}'=\bigoplus_{k\in\Z} \Hk'_{ \al_k,\I_k}\quad\text{in}\quad \L\interval=\bigoplus_{k\in\Z}\L(\I_k).$$ 
Here $\Hk'_{ \al_k,\I_k}$ is the operator in $\L(\I_k)$ (formally) defined by the differential expression 
\begin{gather*}
-{\d^2\over \d x^2 }+\al_k \langle\cdot\,,\,\delta_{z_k}'\rangle\delta_{z_k}',
\end{gather*}
and Neumann boundary conditions at the endpoints of $\I_k$.
In contrast to the operator $\Hk_{ \al_k,\I_k}$, the spectrum 
of $\Hk'_{ \al_k,\I_k}$ \emph{always} contains the eigenvalue $0$ (and leads to the additional condition $0\in S$ in Theorem~\ref{th:main+}). 
Moreover, if $d_k=x_k-x_{k-1}$ is sufficiently small, 
one can choose $\al_k$ in such that
\emph{the first nonzero eigenvalue} of $\Hk'_{ \al_k,\I_k}$ 
coincides with $s_k$  (recall that $s_k$ are the elements of the sequence $S_\disc$ as $k\in\N$, while for $k\in\Zm$ the numbers $s_k$ are defined in \eqref{Sess:acc}).  
\emph{The second nonzero eigenvalue} of $\Hk'_{ \al_k,\I_k}$ is larger or equal to $ \pi^2d_k^{-2}$. 
Thus the properties \eqref{acc-acc1}--\eqref{acc-acc3} hold for the decoupled operator $\H_{ \al,\infty}'$.

In the second step one perturbs $\H_{ \al,\infty}'$ by ``inserting'' $\delta'$-interactions 
of strengths  $\be_k $ at finitely many points $x_k$, $k\in\Z\cap [-n+1,n-1]$. This perturbation does not change the
essential spectrum, while the discrete spectrum will change slightly provided the constants $\be_k$ are sufficiently large.
Moreover, varying $\al_k$ one can even achieve a \emph{precise coincidence} of the discrete spectrum within
$(T_1,T_2)$ with a prescribed sequence $S_\disc$. 

In the last step we pass to the limit $n\to\infty$ and prove that the above properties remain 
valid if 
$\beta_k\to\infty$ as $|k|\to\infty$ sufficiently fast (see the condition (3.18) in \cite{BK21}).

\subsection{Structure of the paper}

The paper is organized as follows. 
In Section~\ref{sec:single} we recall the definition and some spectral properties of Schr\"odinger operators with a single 
$\delta$-interaction on a bounded interval.
The decoupled operator  $\H_{\al,\infty}$ is treated in Section~\ref{sec:deco} and the rigorous definition of the coupled operator $\H\aab$
and a precise formulation of our main result are contained in Section~\ref{sec:co}. 
In Section~\ref{sec:ess} we describe the essential spectrum of $\H\aab$. 
Section~\ref{sec:paco} is devoted to the partly coupled operator  operator $\H^n\aab$ 
and 
its spectral properties. In Section~\ref{sec:disc} we describe the discrete spectrum of $\H\aab$ and complete the proof of our main result.
Finally, in Appendix A we collect some useful material the direct sum of semibounded closed forms and associated self-adjoint operators.

\section{Single $\delta$-interaction on a bounded interval\label{sec:single}}

In this section we recall the definition of Schr\"odinger operators  
on a bounded interval with a $\delta$-interaction supported at an internal point of the interval and
either Dirichlet (this is the most important case for our constructions), Neumann or Robin boundary conditions; 
we also establish some spectral properties of these operators. For more details on $\delta$-interactions we refer to \cite[Section~I.3]{AGHH05}.

Throughout this paper $\lambda_j(\mathcal{H})$ denotes the $j$th eigenvalue of a self-adjoint 
operator $\mathcal{H}$ with purely discrete spectrum bounded from below and accumulating at $\infty$; as usual 
the eigenvalues are counted with multiplicities and ordered as a nondecreasing sequence.
In the following let $\I=(x_-,x_+)\subset\R$ be a bounded interval of length $d(\I)=x_+-x_-$ and 
middle point $y={x_-+x_+\over 2}$.
For $\u,\,\vv\in \W^{1,2}(\I)$, $\al,\,\beta_-,\,\beta_+\in\R$ we consider
\begin{gather}
\label{R-expression}
\begin{split}
\hk_{\I,\al}[\u,\vv]&=(\u',\vv')_{\L(\I)}+\al\, \u(y)\overline{\vv(y)},
\\
\hk_{\I,\al,\be_-,\be_+}[\u,\vv]&=\hk_{\I,\al}[\u,\vv]+
{1\over 2}\left(\be_- \u(x_- )\overline{\vv(x_- )} + \be_+ \u (x_+ )\overline{\vv (x_+ )}\right)
\end{split}
\end{gather} 
(recall that $\W^{1,2}(\I)\subset \C(\overline{\I})$, that is, the values of $\u,\,\vv$ at $x_-,\,x_+,\,y$ are well-defined).

\subsection{Endpoints with Dirichlet boundary conditions\label{sec:single:D}}

In the space $\L(\I)$ we introduce the densely defined, symmetric sesquilinear form
$\hk^D_{\I,\al}$ by
\begin{gather*}
\hk^D_{\I,\al}[\u,\vv]=\hk_{\I ,\al}[\u,\vv],\qquad \dom(\hk^D_{\I,\al})=\W_0^{1,2}(\I).
\end{gather*}
A standard form perturbation argument shows that this form is bounded from below and closed in $\L(\I)$, and the induced norm 
$$
\|\u\|_{\hk^D_{\I,\al}}\coloneqq\left(\hk^D_{\I,\al}[\u,\u]-C \|\u\|^2_{\L(\I)}+\|\u\|^2_{\L(\I)}\right)^{1/2}, 
\text{ where }
C =\inf_{\|\u\|_{\L(\I)}=1}\hk^D_{\I,\al}[\u,\u],
$$
on  $\dom(\hk^D_{\I,\al})$
is equivalent to the usual norm on $\W_0^{1,2}(\I)$.
Therefore, by the first representation theorem  (see, e.g. \cite[Chapter 6, Theorem 2.1]{K66}) there exists a unique self-adjoint operator $\Hk^D_{\I,\al}$ in $\L(\I)$
such that $\dom( \Hk^D_{\I,\al})\subset\dom( \hk^D_{\I,\al})$ and
\begin{gather}\label{FRT}
(\Hk^D_{\I,\al} \u,\vv)_{\L(\I)}=\hk^D_{\I,\al}[\u,\vv],\qquad \u\in \dom(\Hk^D_{\I,\al}),\, \vv\in\dom(\hk^D_{\I,\al}).
\end{gather}

By inserting  {suitable test functions $\vv$ in \eqref{FRT}}
one can easily derive the following explicit characterization of the domain and the action of $\Hk^D_{\I,\al}$.

\begin{proposition}\label{prop:HD:1}
The self-adjoint operator $\Hk^D_{\I,\al}$ associated to the form $\hk^D_{\I,\al}$ via \eqref{FRT} is given by
\begin{equation*}
\begin{split}
(\Hk^D_{\I,\al} \u)\restriction_{\I\setminus\{y\}}&= -(\u\restriction_{\I\setminus\{y\}})'',\\
 \dom(\Hk^D_{\I,\al})&=\left\{\u\in \W^{2,2}\left(\I\setminus\{y\}\right):\ \begin{array}{l}
\u(x_-)=\u(x_+)=0,\\ 
\u(y -0)=\u(y +0),\\
\u'(y+0)-\u'(y -0)=\al \u( y\pm 0 )
\end{array}
\right\}.
\end{split}
\end{equation*}
\end{proposition}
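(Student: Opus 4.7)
The plan is to verify the explicit description by inserting suitable test functions into the form identity \eqref{FRT} and establishing both inclusions of the stated domain. Forward direction: let $\u$ belong to the domain described on the right-hand side. For any $\vv\in\W_0^{1,2}(\I)=\dom(\hk^D_{\I,\al})$ split $\I$ at $y$ and integrate $-\u''$ by parts on each of the two subintervals. The boundary contributions at $x_\pm$ vanish because $\vv(x_\pm)=0$, while the pair of one-sided boundary terms at $y$ combine---using continuity $\u(y-0)=\u(y+0)$ so that $\overline{\vv(y)}$ factors out, together with the prescribed jump condition---into $\al\,\u(y)\overline{\vv(y)}$. This yields $\int_{\I}(-\u'')\overline{\vv}\,\d x=(\u',\vv')_{\L(\I)}+\al\,\u(y)\overline{\vv(y)}=\hk^D_{\I,\al}[\u,\vv]$, which via \eqref{FRT} places $\u$ in $\dom(\Hk^D_{\I,\al})$ and identifies $\Hk^D_{\I,\al}\u=-\u''$ on $\I\setminus\{y\}$.

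Reverse direction and interior regularity: conversely, let $\u\in\dom(\Hk^D_{\I,\al})$ and set $\w\coloneqq\Hk^D_{\I,\al}\u$. Testing \eqref{FRT} against $\vv\in C^\infty_c(x_-,y)$ kills the pointwise term $\al\,\u(y)\overline{\vv(y)}$ and shows that $-\u''=\w$ distributionally on $(x_-,y)$; combined with $\u\in\W^{1,2}(x_-,y)$ this upgrades $\u$ to $\W^{2,2}(x_-,y)$, and the analogous argument works on $(y,x_+)$. Hence $\u\in\W^{2,2}(\I\setminus\{y\})$, and in particular the one-sided traces $\u(y\pm 0)$ and $\u'(y\pm 0)$ are well defined. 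The endpoint values $\u(x_\pm)=0$ and the continuity $\u(y-0)=\u(y+0)$ come for free from $\u\in\W^{1,2}_0(\I)\subset\C(\overline{\I})$.

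Jump condition: returning to \eqref{FRT} with an arbitrary $\vv\in\W^{1,2}_0(\I)$ and integrating by parts on each half, which is now legitimate thanks to the $\W^{2,2}$ regularity just obtained, produces $(\u',\vv')_{\L(\I)}=(\w,\vv)_{\L(\I)}+(\u'(y-0)-\u'(y+0))\overline{\vv(y)}$, and inserting this into $(\w,\vv)_{\L(\I)}=\hk^D_{\I,\al}[\u,\vv]=(\u',\vv')_{\L(\I)}+\al\,\u(y)\overline{\vv(y)}$ and cancelling $(\w,\vv)_{\L(\I)}$ leaves $\bigl(\u'(y+0)-\u'(y-0)-\al\,\u(y)\bigr)\overline{\vv(y)}=0$; choosing $\vv$ with $\vv(y)\ne0$ gives the jump condition. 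The only mildly delicate step is the interior $\W^{2,2}$ bootstrap on each side of $y$, which is needed before the one-sided derivative traces $\u'(y\pm 0)$ acquire meaning; everything else is routine integration by parts on the splitting $\I=(x_-,y)\cup(y,x_+)$.
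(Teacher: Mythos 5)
Your proposal is correct and follows exactly the route the paper indicates: the paper gives no written proof beyond the remark that the characterization follows ``by inserting suitable test functions $\vv$ in \eqref{FRT}'', and your argument carries out precisely that program (piecewise integration by parts for the forward inclusion, $C_c^\infty$ test functions away from $y$ for the $\W^{2,2}$ bootstrap, and a general $\vv\in\W_0^{1,2}(\I)$ with $\vv(y)\neq 0$ for the jump condition). Both directions are handled correctly, so nothing further is needed.
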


By  {Rellich's theorem}
the space $(\dom(\hk^D_{\I,\al}),\|\cdot\|_{\hk^D_{\I,\al}}) $ is compactly embedded  
in $\L(\I)$;
recall that the norm $\|\cdot\|_{\hk^D_{\I,\al}}$ is equivalent to the usual norm on $\W_0^{1,2}(\I)$.
Hence (see, e.g., \cite[Proposition~10.6]{Sch12}) the spectrum of the operator $\Hk^D_{\I,\al}$ is purely discrete. 
Using Proposition~\ref{prop:HD:1} one can easily calculate all eigenvalues of $\Hk^D_{\I,\al}$. 
In order to formulate the related statement in Proposition~\ref{prop:HD:2} below we introduce for $d>0$ the set 
$$\Pi^D_d=\left\{\left({2\pi k\over d}\right)^2 : k\in\N \right\}$$
and the function $\FF^D_d:\R\setminus\Pi^D_d \to {\R}$,
\begin{gather}
\label{F}
\FF^D_d(\lambda)=
\begin{cases}\ds{-2 \sqrt{\lambda} \cot\left({ d  \sqrt{\lambda}\over 2}\right)},&\lambda>0,
\\
-\ds{4\over d},&\lambda=0,
\\
-\ds{2 \sqrt{-\lambda}\coth\left({ d \sqrt{-\lambda}\over 2}\right)},&\lambda<0.
\end{cases}    
\end{gather}
For any fixed $d>0$ the function $\FF^D_d$ is continuous and monotonically increasing on each connected component of $\R\setminus\Pi^D_d$, moreover we have
\begin{gather}
\label{FD:prop}
\lim_{\lambda\to-\infty}\FF^D_{d}(\lambda)=-\infty\quad\text{and}\quad \lim_{\lambda\to \mu\mp 0}\FF^D_{d}(\lambda)=\pm\infty\text{ for all }\mu\in \Pi^D_d.
\end{gather}
In particular, for $d>0$ fixed and any $\al\in\R$ the equation $\al=\FF^D_d(\lambda)$ has a unique solution in $(-\infty,\,(2\pi/d )^2)$; 
this solution will be denoted by $\Lambda^D_{\al,d}$ in the following.

\begin{figure}[h]
\scalebox{0.9}{
\begin{picture}(100,180)
\includegraphics[width=60mm]{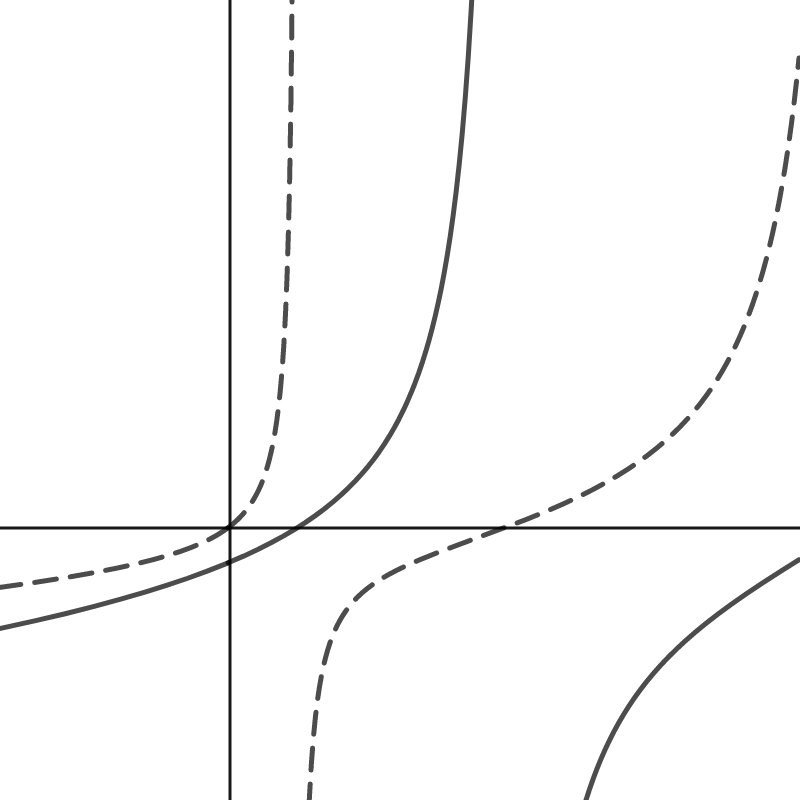}

\put(-88,76){\line(0,-1){18}}
\put(-88,76){\line(-1,0){34}}
\put(-112,76){\line(0,-1){18}}
\put(-128,70){$^{\al}$} 
\put(-88,58){\circle*{3}}
\put(-112,58){\circle*{3}}
\put(-90,45){$^{\Lambda_{\al,d}^D}$}
\put(-112,45){$^{\Lambda_{\al,d}^N}$}

\end{picture}}
\caption{The functions $\FF_{d }^D$ (solid plot) and $\FF_{d }^N$ (dashed plot), and the unique solutions $\Lambda^D_{\al,d}$ 
of $\al=\FF^D_d(\lambda)$ 
in $(-\infty,\,(2\pi/d )^2)$ and $\Lambda^N_{\al,d}$  of $\al=\FF^N_d(\lambda)$ in
$(-\infty,\,(\pi/d )^2)$.}\label{fig2}
\end{figure}

\begin{proposition}
\label{prop:HD:2}
The spectrum of the self-adjoint operator $\Hk^D_{\I,\al}$ is given by 
\begin{equation*}
 \sigma(\Hk^D_{\I,\al})=\Pi^D_{d(\I)}\cup\bigl\{\lambda\in\R\setminus\Pi^D_{d(\I)} : \FF^D_{d(\I)}(\lambda)=\al\bigr\}
 \end{equation*}
 and one has
\begin{gather*}
\lambda_{1}(\Hk^D_{\I,\al})
=\Lambda^D_{\al,d(\I)},\quad
\lambda_{2}(\Hk_{\I,\al}^D) =\left({2\pi  \over d(\I)}\right)^2.
\end{gather*}
\end{proposition}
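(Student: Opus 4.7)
The proof will proceed by directly solving the eigenvalue equation $\Hk^D_{\I,\al}\u=\lambda\u$ using the explicit description of the domain and action given in Proposition~\ref{prop:HD:1}. Any eigenfunction $\u$ satisfies $-\u''=\lambda\u$ classically on $(x_-,y)$ and $(y,x_+)$, vanishes at $x_\pm$, is continuous at $y$, and satisfies the jump $\u'(y+0)-\u'(y-0)=\al\u(y)$. The crucial observation is that both the operator and the boundary/interface conditions are invariant under reflection $x\mapsto 2y-x$ about the midpoint, so the eigenspaces decompose into symmetric and antisymmetric parts, and it suffices to analyze each parity separately on the half-interval $(y,x_+)$ of length $d/2$, where $d=d(\I)$.

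For antisymmetric eigenfunctions one has $\u(y)=0$, so the jump condition is automatic and the interaction is invisible. The problem reduces to pure Dirichlet boundary conditions at $y$ and at $x_+$, yielding exactly the eigenvalues $(2\pi k/d)^2$, $k\in\N$, i.e.\ the set $\Pi^D_d$, each with a one-dimensional eigenspace. For symmetric eigenfunctions, continuity at $y$ is automatic while $\u'(y+0)=-\u'(y-0)$, so the jump condition becomes $2\u'(y+0)=\al\u(y)$. I would parametrize the solution on $(y,x_+)$ vanishing at $x_+$ by $A\sin(\sqrt{\lambda}(x_+-x))$ for $\lambda>0$, $A(x_+-x)$ for $\lambda=0$, and $A\sinh(\sqrt{-\lambda}(x_+-x))$ for $\lambda<0$. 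Inserting each into the interface condition and dividing by $\u(y)$ (which is nonzero precisely when $\lambda\notin\Pi^D_d$) produces, in each of the three regimes, the equation $\al=\FF^D_d(\lambda)$ with $\FF^D_d$ exactly as defined in \eqref{F}. Conversely, if $\lambda\in\Pi^D_d$ then $\sin(\sqrt{\lambda}d/2)=0$ but $\cos(\sqrt{\lambda}d/2)=\pm 1$, forcing $A=0$, so there are no symmetric eigenvalues inside $\Pi^D_d$ and the two families are disjoint. This yields the claimed characterization $\sigma(\Hk^D_{\I,\al})=\Pi^D_{d}\cup\{\lambda\notin\Pi^D_{d}:\FF^D_d(\lambda)=\al\}$.

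For the ordering, I would invoke the monotonicity and limit properties of $\FF^D_d$ recorded just above the proposition (cf.\ \eqref{FD:prop}): on the leftmost connected component $(-\infty,(2\pi/d)^2)$ the function $\FF^D_d$ is continuous and strictly increases from $-\infty$ to $+\infty$, so for any $\al\in\R$ there is exactly one solution, namely $\Lambda^D_{\al,d}$, and this solution lies strictly below $(2\pi/d)^2$. Hence $\lambda_1(\Hk^D_{\I,\al})=\Lambda^D_{\al,d}$. Since every further zero of $\FF^D_d(\lambda)-\al$ lies in a component $((2\pi k/d)^2,(2\pi(k+1)/d)^2)$ with $k\geq 1$ and is therefore strictly larger than $(2\pi/d)^2$, the second eigenvalue is the smallest element of $\Pi^D_d$, i.e.\ $\lambda_2(\Hk^D_{\I,\al})=(2\pi/d)^2$.

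I do not expect a genuine obstacle: the proof is an ODE calculation, and the only subtlety is bookkeeping the three cases $\lambda>0$, $\lambda=0$, $\lambda<0$ uniformly, and verifying that $\Pi^D_d$-values are produced only by the antisymmetric family. The symmetry reduction makes both the computation and the multiplicity statement transparent, which is what ultimately ensures $\lambda_2=(2\pi/d)^2$ independently of $\al$.
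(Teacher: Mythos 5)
Your proof is correct and carries out precisely the elementary ODE computation that the paper leaves to the reader (the paper offers no written proof of Proposition~\ref{prop:HD:2} beyond the remark that the eigenvalues can be easily calculated from Proposition~\ref{prop:HD:1}). The parity decomposition about the midpoint $y$ is a clean way to organize that calculation, and your case analysis in the three regimes, the verification that points of $\Pi^D_{d(\I)}$ carry only antisymmetric eigenfunctions, and the ordering argument via the monotonicity of $\FF^D_{d(\I)}$ on $\bigl(-\infty,(2\pi/d(\I))^2\bigr)$ are all sound.
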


\subsection{Endpoints with Neumann boundary conditions\label{sec:single:N}}

Besides the form $\hk^D_{\I,\al}$ we also consider
the densely defined, symmetric sesquilinear form
\begin{gather*}
\hk^N_{\I,\al}[\u,\vv]=\hk_{\I ,\al}[\u,\vv],\qquad \dom(\hk^D_{\I,\al})=\W^{1,2}(\I),
\end{gather*}
in $\L(\I)$. This form is also bounded from below and closed, and hence there exists a unique self-adjoint operator $\Hk^N_{\I,\al}$ in $\L(\I)$
such that $\dom( \Hk^N_{\I,\al})\subset\dom( \hk^N_{\I,\al})$ and
\begin{gather}\label{FRTN}
(\Hk^N_{\I,\al} \u,\vv)_{\L(\I)}=\hk^N_{\I,\al}[\u,\vv],\qquad \u\in \dom(\Hk^N_{\I,\al}),\, \vv\in\dom(\hk^N_{\I,\al}).
\end{gather}
The counterpart of Proposition~\ref{prop:HD:1} in the present situation reads as follows.

\begin{proposition}\label{prop:HN:1}
The self-adjoint operator $\Hk^D_{\I,\al}$ associated to the form $\hk^N_{\I,\al}$ via \eqref{FRTN} is given by
\begin{equation*}
\begin{split}
(\Hk^N_{\I,\al} \u)\restriction_{\I\setminus\{y\}}&= -(\u\restriction_{\I\setminus\{y\}})'',\\
 \dom(\Hk^N_{\I,\al})&=\left\{\u\in \W^{2,2}\left(\I\setminus\{y\}\right):\ \begin{array}{l}
\u'(x_-)=\u'(x_+)=0,\\ 
\u(y -0)=\u(y +0),\\
\u'(y+0)-\u'(y -0)=\al \u( y\pm 0 )
\end{array}
\right\}.
\end{split}
\end{equation*}
\end{proposition}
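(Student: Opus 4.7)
The plan is to mimic the argument sketched for Proposition~\ref{prop:HD:1}: derive the stated characterization by testing the identity \eqref{FRTN} against a carefully chosen family of functions $\vv\in\dom(\hk^N_{\I,\al})=\W^{1,2}(\I)$. The only structural differences compared to the Dirichlet case are that the test functions no longer vanish at the endpoints $x_\pm$ and that the form domain does not impose any boundary condition there; these two features conspire to produce Neumann rather than Dirichlet conditions at $x_\pm$, while the internal interface condition at $y$ is recovered in exactly the same way as before.

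First, I would check the easy inclusion ``$\supset$'': for any $\u$ belonging to the set on the right-hand side, integrate $-(\u\restriction_{\I\setminus\{y\}})''\,\overline{\vv}$ by parts separately on $(x_-,y)$ and $(y,x_+)$ for an arbitrary $\vv\in\W^{1,2}(\I)$. The boundary terms at $x_\pm$ vanish because of $\u'(x_\pm)=0$, the boundary terms at $y$ collapse using continuity of $\u$ there, and the jump $\u'(y+0)-\u'(y-0)=\al\u(y)$ produces exactly the point term $\al\u(y)\overline{\vv(y)}$ in $\hk_{\I,\al}$. This shows that $\u\in\dom(\Hk^N_{\I,\al})$ with $(\Hk^N_{\I,\al}\u)\restriction_{\I\setminus\{y\}}=-(\u\restriction_{\I\setminus\{y\}})''$.

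For the nontrivial inclusion ``$\subset$'', let $\u\in\dom(\Hk^N_{\I,\al})$ and set $f\coloneqq\Hk^N_{\I,\al}\u\in\L(\I)$. I would proceed in three steps, each isolating one piece of the required structure by a suitable choice of $\vv$:
\begin{enumerate}
\item[(i)] Taking $\vv\in C_c^\infty(x_-,y)$ and $\vv\in C_c^\infty(y,x_+)$ (so that the point term at $y$ and the boundary values at $x_\pm$ drop out), the identity \eqref{FRTN} gives $-\u''=f$ distributionally on each of $(x_-,y)$ and $(y,x_+)$; hence $\u\in\W^{2,2}(\I\setminus\{y\})$, and continuity of $\u$ at $y$ is automatic from $\u\in\W^{1,2}(\I)\subset\C(\overline{\I})$.
\item[(ii)] Now integrate by parts in \eqref{FRTN} for an arbitrary $\vv\in\W^{1,2}(\I)$; using (i), the left-hand side becomes
\[
\int_{\I\setminus\{y\}}(-\u'')\overline{\vv}\,\d x
=(\u',\vv')_{\L(\I)}-\bigl[\u'\overline{\vv}\bigr]_{x_-}^{x_+}+\bigl(\u'(y+0)-\u'(y-0)\bigr)\overline{\vv(y)}.
\]
Subtracting $\hk^N_{\I,\al}[\u,\vv]$ then yields
\[
-\u'(x_+)\overline{\vv(x_+)}+\u'(x_-)\overline{\vv(x_-)}+\bigl(\u'(y+0)-\u'(y-0)-\al\u(y)\bigr)\overline{\vv(y)}=0
\]
for every $\vv\in\W^{1,2}(\I)$.
\item[(iii)] Varying $\vv$ so that only one of the three values $\vv(x_-),\vv(x_+),\vv(y)$ is nonzero reads off the boundary conditions $\u'(x_-)=\u'(x_+)=0$ and the $\delta$-jump condition $\u'(y+0)-\u'(y-0)=\al\u(y)$.
\end{enumerate}

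The main obstacle (such as it is) is step (iii): one must produce, for each of the three points $x_-,x_+,y$, a $\W^{1,2}$ test function whose trace is prescribed at that point and vanishes at the other two; this is straightforward using piecewise-linear cutoffs, and it is exactly this freedom in choosing $\vv$ at $x_\pm$ — absent in the Dirichlet setup — that forces the Neumann conditions. Assembling (i)--(iii) gives the claimed description of $\dom(\Hk^N_{\I,\al})$ and the stated action, completing the proof.
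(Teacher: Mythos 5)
Your proof is correct and follows exactly the route the paper intends: the paper states this result as the Neumann counterpart of Proposition~\ref{prop:HD:1}, whose proof it describes only as ``inserting suitable test functions $\vv$'' into the representation identity, which is precisely what your steps (i)--(iii) carry out in detail (localization to get $\u\in\W^{2,2}(\I\setminus\{y\})$, integration by parts, and reading off the boundary and interface conditions from the freedom of $\vv(x_\pm)$ and $\vv(y)$). No gaps.
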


It follows that 
the spectrum of $\Hk^N_{\I,\al}$ is purely discrete. In a similar way as in the previous subsection 
we consider for $d>0$ the set
$$\Pi^N_d=\left\{\left({\pi (2k-1)\over d}\right)^2 :k\in\N \right\}$$
and the function
\begin{gather*}
\FF^N_d(\lambda)=
\begin{cases}\ds{2 \sqrt{\lambda} \tan\left({ d  \sqrt{\lambda}\over 2}\right)},&\lambda>0,
\\
0,&\lambda=0,
\\
-\ds{2 \sqrt{-\lambda}\tanh\left({ d \sqrt{-\lambda}\over 2}\right)},&\lambda<0.
\end{cases}    
\end{gather*}
The function $\FF^N_d$ is continuous and monotonically increasing on each connected component of $\R\setminus\Pi^N_d$, and 
the properties in
\eqref{FD:prop} hold also $\FF^N_d$. 
In particular, for $d>0$ fixed and any $\al\in\R$ the equation
$\al=\FF^N_d(\lambda)$ has a unique solution $\Lambda^N_{\al,d}$ in $(-\infty,\,(\pi/d )^2)$; cf. Figure~\ref{fig2}.

\begin{proposition}\label{prop:HN:2}
The spectrum of the self-adjoint operator $\Hk^N_{\I,\al}$ is given by 
\begin{equation*}
 \sigma(\Hk^N_{\I,\al})=\Pi^N_{d(\I)}\cup\bigl\{\lambda\in\R\setminus\Pi^N_{d(\I)} : \FF^N_{d(\I)}(\lambda)=\al\bigr\}
 \end{equation*}
 and one has
\begin{gather*}
\lambda_{1}(\Hk^N_{\I,\al})
=\Lambda^N_{\al,d(\I)},\quad
\lambda_{2}(\Hk_{\I,\al}^N) =\left({\pi  \over d(\I)}\right)^2.
\end{gather*}
\end{proposition}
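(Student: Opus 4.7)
The plan is to mirror the argument sketched for the Dirichlet case in Proposition~\ref{prop:HD:2}: by Proposition~\ref{prop:HN:1} the operator $\Hk^N_{\I,\al}$ acts as $-u''$ on $\I\setminus\{y\}$, and the spectrum is purely discrete by Rellich's theorem applied to $\dom(\hk^N_{\I,\al})=\W^{1,2}(\I)$. An eigenvalue $\lambda$ with eigenfunction $\u$ is thus characterised by the property that $\u$ solves $-\u''=\lambda\u$ on each subinterval $(x_-,y)$ and $(y,x_+)$, satisfies the Neumann conditions $\u'(x_\pm)=0$, is continuous at $y$, and satisfies the jump condition $\u'(y+0)-\u'(y-0)=\al\u(y)$. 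Write $d=d(\I)$ so that $y-x_-=x_+-y=d/2$.

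First I would treat $\lambda\notin \Pi^N_d$. For $\lambda>0$, set $k=\sqrt{\lambda}$ and make the ansatz $\u(x)=A\cos(k(x-x_-))$ on $(x_-,y)$ and $\u(x)=B\cos(k(x_+-x))$ on $(y,x_+)$, which automatically satisfies the Neumann conditions at $x_\pm$. Continuity at $y$ together with $\cos(kd/2)\neq 0$ forces $A=B$, and the jump condition then reduces to $2k\sin(kd/2)\,A=\al\cos(kd/2)\,A$, which admits a nontrivial solution iff $\al=2\sqrt{\lambda}\tan(\sqrt{\lambda}\,d/2)=\FF^N_d(\lambda)$. The cases $\lambda=0$ (constant ansatz, yielding $\al=0=\FF^N_d(0)$) and $\lambda<0$ (with $\cosh/\sinh$ in place of $\cos/\sin$, yielding the third branch of $\FF^N_d$) are handled identically.

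Next I would treat $\lambda\in\Pi^N_d$, i.e. $\cos(kd/2)=0$. Then the antisymmetric choice $B=-A$ gives $\u(y)=A\cos(kd/2)=0$ together with continuity, and the jump condition is automatically $0=0$; hence such $\lambda$ lies in $\sigma(\Hk^N_{\I,\al})$ for every $\al\in\R$. Combining the two cases yields the spectral formula in the proposition.

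For the identification of $\lambda_1$ and $\lambda_2$ I would use the stated properties of $\FF^N_d$: the interval $(-\infty,(\pi/d)^2)$ is a single connected component of $\R\setminus\Pi^N_d$, on which $\FF^N_d$ is continuous, strictly increasing, and maps onto $(-\infty,+\infty)$ by the counterpart of \eqref{FD:prop}. Hence $\al=\FF^N_d(\lambda)$ has a unique solution $\Lambda^N_{\al,d}$ there, and this is the smallest element of $\sigma(\Hk^N_{\I,\al})$, giving $\lambda_1=\Lambda^N_{\al,d}$. The next candidate for an eigenvalue is either the smallest element $(\pi/d)^2$ of $\Pi^N_d$ or the unique solution of $\al=\FF^N_d(\lambda)$ in the next component $((\pi/d)^2,(3\pi/d)^2)$; since the latter is strictly greater than $(\pi/d)^2$, we obtain $\lambda_2=(\pi/d)^2$. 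The argument is essentially mechanical given Proposition~\ref{prop:HN:1}; the only point deserving minor care is the verification of the limiting behaviour and monotonicity of $\FF^N_d$ on each component, which follows directly from its explicit defining formula.
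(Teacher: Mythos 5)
Your proposal is correct and is exactly the elementary computation the paper has in mind: the authors omit the proof of Proposition~\ref{prop:HN:2} (as of Proposition~\ref{prop:HD:2}), remarking only that the eigenvalues can be "easily calculated" from the explicit boundary/jump conditions, which is precisely what you carry out. The transmission ansatz, the separate treatment of $\lambda\in\Pi^N_{d}$ via the antisymmetric eigenfunction, and the monotonicity argument locating $\lambda_1$ and $\lambda_2$ all match the intended reasoning.
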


\subsection{Endpoints with Robin boundary conditions}\label{sec:single:R}

In this subsection we consider the densely defined, symmetric sesquilinear form
\begin{gather*}
\hk^R_{\I,\al,\be_-,\be_+}[\u,\vv]=\hk_{\I,\al,\be_-,\be_+}[\u,\vv],\qquad
\dom(\hk^R_{\I,\al,\be_-,\be_+})=\W^{1,2}(\I),
\end{gather*}
which is again bounded from below and closed in $\L(\I)$; we shall use later that 
the induced norm on  $\dom(\hk^R_{\I,\al,\be_-,\be_+})$
is equivalent to the usual norm on $\W^{1,2}(\I)$.
The corresponding self-adjoint operator $\Hk^R_{\I,\al,\be_-,\be_+}$ in $\L(\I)$
has the following form.

\begin{proposition}\label{prop:HR:1}
The self-adjoint operator $\Hk^R_{\I,\al,\be_-,\be_+}$ associated to the form $\hk^R_{\I,\al,\be_-,\be_+}$ is given by

\begin{equation*}
\begin{split}
(\Hk^R_{\I,\al,\be_-,\be_+} \u)\restriction_{\I\setminus\{y\}}&= -(\u\restriction_{\I\setminus\{y\}})'',\\
 \dom(\Hk^R_{\I,\al,\be_-,\be_+})&=\left\{\u\in \W^{2,2}\left(\I\setminus\{y\}\right):\ \begin{array}{l}
\u'(x_-)={1\over 2}\beta_-\u(x_{-}),\\ 
\u'(x_+)=-{1\over 2}\beta_+\u(x_{+}),\\ 
\u(y -0)=\u(y +0),\\
\u'(y+0)-\u'(y -0)=\al \u( y\pm 0 )
\end{array}
\right\}.
\end{split}
\end{equation*}
\end{proposition}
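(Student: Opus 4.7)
The plan is to proceed exactly as in the derivations of Propositions~\ref{prop:HD:1} and \ref{prop:HN:1}, which are justified in the paper by ``inserting suitable test functions'' into the form identity. First, the form $\hk^R_{\I,\al,\be_-,\be_+}$ is bounded from below and closed on $\W^{1,2}(\I)$, as is already stated; this rests on the fact that on $\W^{1,2}(\I)\hookrightarrow\C(\overline{\I})$ the point-evaluation functionals $\u\mapsto \u(y)$ and $\u\mapsto \u(x_\pm)$ are bounded and can, by the standard trace-type estimate, be controlled by $\epsilon\|\u'\|_{\L(\I)}^2+C_\epsilon\|\u\|_{\L(\I)}^2$ for arbitrarily small $\epsilon>0$. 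The first representation theorem then produces a unique self-adjoint operator $\Hk^R_{\I,\al,\be_-,\be_+}$ satisfying the analogue of \eqref{FRT}, and it only remains to identify its domain and action.

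To that end, fix $\u\in\dom(\Hk^R_{\I,\al,\be_-,\be_+})$ and set $f=\Hk^R_{\I,\al,\be_-,\be_+}\u$. Testing the form identity against $\vv\in C_c^\infty(\I\setminus\{y\})$ (so that $\vv$ vanishes near $x_-$, $x_+$ and $y$) kills the point terms, yielding $(\u',\vv')_{\L(\I)}=(f,\vv)_{\L(\I)}$; hence $-\u''=f$ distributionally on $\I\setminus\{y\}$, and elliptic regularity gives $\u\in\W^{2,2}(\I\setminus\{y\})$. The continuity $\u(y-0)=\u(y+0)$ is automatic from $\u\in\W^{1,2}(\I)$.

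Next, I would extract the three remaining conditions by localizing the test functions:
\begin{itemize}
\item Choosing $\vv\in C_c^\infty(\I)$ supported near $y$ but away from $x_\pm$, integration by parts on $(x_-,y)$ and $(y,x_+)$ together with $-\u''=f$ produces the surface term $(\u'(y-0)-\u'(y+0))\overline{\vv(y)}$, which by the form identity must equal $\al\,\u(y)\overline{\vv(y)}$; varying $\vv(y)$ gives $\u'(y+0)-\u'(y-0)=\al\,\u(y\pm 0)$.
\item Choosing $\vv\in\W^{1,2}(\I)$ supported in a neighborhood of $x_-$ and vanishing near $y$ and $x_+$, integration by parts produces the boundary term $-\u'(x_-)\overline{\vv(x_-)}$ on the form side, which must match $\tfrac12\be_-\u(x_-)\overline{\vv(x_-)}$; the arbitrariness of $\vv(x_-)$ yields $\u'(x_-)=\tfrac12\be_-\u(x_-)$. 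The analogous test at $x_+$ gives $\u'(x_+)=-\tfrac12\be_+\u(x_+)$ (note the sign flip from the orientation of the boundary).
\end{itemize}
This shows $\u$ belongs to the set on the right-hand side of the claimed formula. The reverse inclusion follows from a direct integration-by-parts check: if $\u$ lies in that set and we define $\tilde{\Hk}\u=-(\u\restriction_{\I\setminus\{y\}})''$, then for every $\vv\in\W^{1,2}(\I)$ one verifies $(\tilde{\Hk}\u,\vv)_{\L(\I)}=\hk^R_{\I,\al,\be_-,\be_+}[\u,\vv]$, so $\tilde{\Hk}$ is a symmetric extension of $\Hk^R_{\I,\al,\be_-,\be_+}$ and self-adjointness forces equality.

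The only subtlety, rather than a genuine obstacle, is keeping correct track of the factor $\tfrac12$ in front of $\be_\pm$ and of the orientation signs at $x_-$ versus $x_+$; because the traces $\vv(x_-),\vv(x_+),\vv(y)$ can be prescribed independently by choosing $\vv$ with disjoint supports, each of the four conditions is uniquely determined.
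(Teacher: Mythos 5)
Your proposal is correct and follows exactly the route the paper intends: the paper states this proposition without proof, indicating only (for the Dirichlet analogue, Proposition~\ref{prop:HD:1}) that one ``inserts suitable test functions'' into the form identity, and your argument carries out precisely that localization-plus-integration-by-parts procedure, with the signs and the factor $\tfrac12$ at $x_\pm$ handled correctly, and closes the reverse inclusion via the first representation theorem. Nothing is missing.
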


The spectrum of the operator $\Hk^R_{\I,\al,\be_-,\be_+}$ is purely discrete. 
It can be determined in a similar way as in the previous subsections. However, the precise values 
are not needed for our purposes. Instead, we make use of the fact that
 for large $\be_\pm$ the eigenvalues of $\Hk^R_{\I,\al,\be_-,\be_+}$ are close to the eigenvalues of the self-adjoint operator
$\Hk^D_{\I , \al}$.

\begin{proposition}\label{prop:HR:2}
For the $j$-th eigenvalue of the self-adjoint operators $\Hk^R_{\I,\al,\be_-,\be_+}$ and $\Hk^D_{\I , \al}$ one has
\begin{gather}\label{prop:HR:2:conv}
\lambda_j(\Hk^R_{\I , \al ,\be_-,\be_+ })\to\lambda_j(\Hk^D_{\I , \al})\text{
as }\min\{\beta_-,\beta_+\}\to+\infty.
\end{gather}
\end{proposition}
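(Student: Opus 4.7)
The plan is to establish the convergence via the min--max variational characterization of the eigenvalues for the closed semibounded forms $\hk^R_{\I,\al,\be_-,\be_+}$ and $\hk^D_{\I,\al}$. The basic observation is that on the subspace $\W_0^{1,2}(\I)\subset\W^{1,2}(\I)$ the boundary terms $\tfrac{1}{2}\be_\pm|\u(x_\pm)|^2$ vanish, so the two forms agree there. Using $\W_0^{1,2}(\I)$-subspaces as trial spaces on the Robin side in the min--max principle immediately yields the one-sided bound
\begin{equation*}
\lambda_j(\Hk^R_{\I,\al,\be_-,\be_+})\le\lambda_j(\Hk^D_{\I,\al}),\qquad \be_-,\be_+\in\R.
\end{equation*}

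For the matching lower bound I would first exploit monotonicity: for $\be_\pm\ge 0$ the form $\hk^R_{\I,\al,\be_-,\be_+}$ is non-decreasing in each $\be_\pm$, hence so is $\lambda_j(\Hk^R_{\I,\al,\be_-,\be_+})$ by min--max, and the limit $\ell_j\coloneqq\lim_{\min\{\be_-,\be_+\}\to\infty}\lambda_j(\Hk^R_{\I,\al,\be_-,\be_+})\in(-\infty,\lambda_j(\Hk^D_{\I,\al})]$ exists. Fix a sequence $\be^n=(\be_-^n,\be_+^n)$ with $\min\{\be_-^n,\be_+^n\}\to\infty$ and let $\u_1^n,\dots,\u_j^n$ be $\L(\I)$-orthonormal eigenfunctions of $\Hk^R_{\I,\al,\be^n}$ for the first $j$ eigenvalues. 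Since the $\hk^R$-induced norm is equivalent to the usual $\W^{1,2}(\I)$-norm (Section~\ref{sec:single:R}), the estimate $\hk^R_{\I,\al,\be^n}[\u_k^n,\u_k^n]\le\lambda_j(\Hk^D_{\I,\al})$ gives a uniform $\W^{1,2}(\I)$-bound on $\{\u_k^n\}_n$. By Rellich's theorem and the compact one-dimensional embedding $\W^{1,2}(\I)\hookrightarrow\C(\overline{\I})$, a common subsequence satisfies $\u_k^n\rightharpoonup\u_k$ weakly in $\W^{1,2}(\I)$, strongly in $\L(\I)$ and uniformly on $\overline{\I}$. Strong $\L$-convergence preserves orthonormality, so $V_j\coloneqq\mathrm{span}\{\u_1,\dots,\u_j\}$ is $j$-dimensional. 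Moreover, the uniform bound on $\hk^R_{\I,\al,\be^n}[\u_k^n,\u_k^n]$ together with non-negativity of the boundary terms for large $n$ forces $\be_\pm^n|\u_k^n(x_\pm)|^2$ to stay bounded; since $\be_\pm^n\to\infty$ this yields $\u_k^n(x_\pm)\to 0$, and uniform convergence then gives $\u_k(x_\pm)=0$, i.e.\ $V_j\subset\W_0^{1,2}(\I)$.

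To conclude $\ell_j\ge\lambda_j(\Hk^D_{\I,\al})$ I would plug $V_j$ into the Dirichlet min--max. For $\u=\sum_{k=1}^j c_k\u_k\in V_j$ set $\u^n\coloneqq\sum_{k=1}^j c_k\u_k^n$; by orthonormality of the eigenfunctions, $\hk^R_{\I,\al,\be^n}[\u^n,\u^n]=\sum_{k=1}^j|c_k|^2\lambda_k(\Hk^R_{\I,\al,\be^n})\le\lambda_j(\Hk^R_{\I,\al,\be^n})\|\u^n\|^2_{\L(\I)}\le\ell_j\|\u^n\|^2_{\L(\I)}$. Dropping the non-negative boundary terms (valid for $n$ large) gives $\|(\u^n)'\|^2_{\L(\I)}+\al|\u^n(y)|^2\le\ell_j\|\u^n\|^2_{\L(\I)}$. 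Weak lower semicontinuity of $\u\mapsto\|\u'\|^2_{\L(\I)}$, continuity of point evaluation at $y$ under uniform convergence, and strong $\L$-convergence of $\u^n\to\u$ then yield $\hk^D_{\I,\al}[\u,\u]\le\ell_j\|\u\|^2_{\L(\I)}$ in the limit, and the min--max characterization of $\lambda_j(\Hk^D_{\I,\al})$ with trial space $V_j\subset\W_0^{1,2}(\I)$ produces $\lambda_j(\Hk^D_{\I,\al})\le\ell_j$, as required.

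The main obstacle is the lower bound, specifically the verification that $V_j\subset\W_0^{1,2}(\I)$, which requires transferring the vanishing of the boundary traces $\u_k^n(x_\pm)\to 0$ along the sequence to the weak $\W^{1,2}(\I)$-limits $\u_k$. This step rests squarely on the compact one-dimensional Sobolev embedding $\W^{1,2}(\I)\hookrightarrow\C(\overline{\I})$, which provides uniform convergence of the traces; the rest of the argument is then a fairly standard application of the min--max principle combined with weak lower semicontinuity.
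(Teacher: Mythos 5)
Your proof is correct, but it takes a genuinely different route from the paper's. The paper first reduces to the symmetric case $\be_-=\be_+=\be$ by monotonicity, then invokes the abstract monotone convergence theorem for increasing families of semibounded forms (Simon \cite{S78}; see also \cite{BHSW10}) to obtain strong resolvent convergence of $\Hk^R_{\I,\al,\be,\be}$ to $\Hk^D_{\I,\al}$, upgrades this to norm resolvent convergence using compactness of the resolvents together with the operator inequality $(\Hk^D_{\I,\al}-\mu\Id)^{-1}\le(\Hk^R_{\I,\al,\be,\be}-\mu\Id)^{-1}$ and \cite[Theorem VIII-3.5]{K66}, and finally cites \cite[Corollary A.15]{P06} to pass from norm resolvent convergence to convergence of the eigenvalues. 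You instead argue directly on the variational characterization: the one-sided bound $\lambda_j(\Hk^R_{\I,\al,\be_-,\be_+})\le\lambda_j(\Hk^D_{\I,\al})$ from Dirichlet trial spaces, and the matching lower bound by extracting weak $\W^{1,2}(\I)$-limits of orthonormal eigenfunctions, showing that the boundary traces are annihilated in the limit (so the limiting $j$-dimensional space sits in $\W_0^{1,2}(\I)$), and invoking weak lower semicontinuity of the Dirichlet integral. Your argument is more elementary and self-contained, avoiding the form-convergence machinery entirely, whereas the paper's route yields the stronger conclusion of norm resolvent convergence, of which eigenvalue convergence is a corollary. Two minor points you should make explicit in a final write-up: the a priori $\W^{1,2}(\I)$-bound needs the form-norm equivalence to be \emph{uniform} in $\be_\pm\ge 0$, which follows from the interpolation estimate $|\u(y)|^2\le\eps\|\u'\|^2_{\L(\I)}+C_\eps\|\u\|^2_{\L(\I)}$ applied to the term $\al\,|\u(y)|^2$; and the inequality $\lambda_j(\Hk^R_{\I,\al,\be_-^n,\be_+^n})\le\ell_j$ used in your last step holds because $\ell_j$ is the supremum of the monotone family over all $\be_\pm$, not merely a limit along your particular sequence.
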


\begin{proof}
First we note that by the min-max principle (see, e.g., \cite[Section~4.5]{De95}) the function
$$\R^2\ni(\beta_-,\beta_+)\mapsto \lambda_j(\Hk^R_{\I , \al ,\be_-,\be_+ })$$
is monotonically increasing in each of its arguments.
Therefore it suffices to show that
\begin{gather}\label{prop:HR:2:conv2}
\lambda_j(\Hk^R_{\I , \al ,\be,\be})\to\lambda_j(\Hk^D_{\I , \al})\text{
as }\beta\to+\infty.
\end{gather}
Without loss of generality we may assume in the following that $\beta\ge 0$.
Note first that for $\beta\leq \widetilde \beta$ we have $\hk^R_{\I , \al ,\be,\be}\leq \hk^R_{\I , \al ,\widetilde\be,\widetilde\be}$ in the (usual) form sense.
In the present situation this means 
\begin{equation*}
 \hk^R_{\I , \al ,\be,\be}[u,u]\leq \hk^R_{\I , \al ,\widetilde\be,\widetilde\be}[u,u],\qquad 
 u\in \dom(\hk^R_{\I , \al ,\widetilde\be,\widetilde\be})=\dom(\hk^R_{\I , \al ,\be,\be}).
\end{equation*}
Moreover, it is easy to see that 
\begin{gather*}
\dom(\hk^D_{\I , \al})=
\left\{
u\in\bigcap\limits_{\beta\ge 0} \dom(\hk^R_{\I , \al ,\be ,\be }):\ 
\sup_{\beta\ge 0} 
\hk^R_{\I , \al ,\be ,\be  }[u,u]<\infty
\right\}
\end{gather*}
and that $\hk^R_{\I , \al ,\be ,\be  }[u,u]=\hk^D_{\I , \al}[u,u]$ holds for all $u\in \dom(\hk^D_{\I , \al})$.
Therefore, \cite[Theorem~3.1]{S78} (see also \cite[Theorem 4.2]{BHSW10})  implies the strong resolvent convergence
\begin{gather}\label{src}
\forall f\in \L(\I):\quad
\left\|(\Hk^R_{\I , \al ,\be ,\be  }-\mu\Id)^{-1}f-
(\Hk^D_{\I , \al }-\mu\Id)^{-1}f\right\|_{\L(\I)}\to 0\text{ as }\be\to+\infty,
\end{gather}
where $\mu\in \rho(\Hk^R_{\I , \al,\be,\be})\cap \rho(\Hk^D_{\I , \al})$, and as usual $\Id$ stands for the identity operator.
Now observe that we can chose $\mu<\min\{\sigma(\Hk^D_{\I , \al}),\sigma(\Hk^R_{\I , \al,\be,\be})\}$, $\beta\geq 0$. 
Then $\hk^R_{\I , \al ,\be ,\be  }\leq \hk^D_{\I , \al}$ implies
\begin{equation*}
 (\Hk^D_{\I , \al  }-\mu\Id)^{-1}\leq (\Hk^R_{\I , \al ,\be ,\be  }-\mu\Id)^{-1},
\end{equation*}
and since the resolvents
$(\Hk^R_{\I , \al ,\be ,\be  }-\mu\Id)^{-1}$ and $(\Hk^D_{\I , \al  }-\mu\Id)^{-1}$ are both compact in $\L(\I)$,
we conclude from \cite[Theorem VIII-3.5]{K66}  that the strong convergence in 
\eqref{src} becomes even convergence in the operator norm, i.e. 
\begin{gather}\label{nrc} 
\left\|(\Hk^R_{\I , \al ,\be ,\be  }-\mu\Id)^{-1}-
(\Hk^D_{\I , \al }-\mu\Id)^{-1}\right\|\to 0\text{ as }\beta\to +\infty.
\end{gather}
It is well-known (see, e.g., \cite[Corollary~A.15]{P06}) that  
\eqref{nrc} implies  \eqref{prop:HR:2:conv2}, and hence \eqref{prop:HR:2:conv}.
\end{proof}

\section{The decoupled operator $\H\aa$\label{sec:deco}}

In this section we define and study the spectrum of the self-adjoint operator
$$\H\aa=\bigoplus_{k\in\Z} \Hk^D_{\I_k,\al_k} \quad\text{in}\quad %\L\interval=
\bigoplus_{k\in\Z}\L(\I_k)$$
with suitably chosen interaction strengths $\al_k$ and intervals $\I_k$ that are stacked in a row with finite total length.

\subsection{Auxiliary sequence $\widehat S_\ess$ and the intervals $\I_k$}

Recall that the set $S_{\ess} $ and the sequence  $S_{\disc}=(s_k )_{k\in\N}$ satisfying \eqref{Sprop1}--\eqref{Sprop5} are given. 
It is easy to see that, due to \eqref{Sprop1}--\eqref{Sprop2},
one can always find a sequence $\widehat S_\ess=(s_k)_{k\in\Zm}$
such that
	\begin{gather}\label{Sprop1+} 
	S_{\ess}=\{\text{accumulation points of }\widehat S_\ess\},
	\\
	\label{Sprop2+} 
	 \widehat S_\ess\cap [T_1,T_2]\subset \opset.
	\end{gather}
Note that the elements of $S_\disc$ and $\widehat S_\ess$ are both denoted by $s_k$, 
but $s_k$ with index $k\in\N$ belongs to $S_\disc$, while $s_k$ with index $k\in\Zm$ is an element of $\widehat S_\ess$.
Recall from \eqref{Sprop3} that the sequence  $S_{\disc}=(s_k )_{k\in\N}$ is contained in $(T_1,T_2)$. For all $k\in\Z$ we 
fix $d_k>0$ such that
\begin{gather}\label{d:assump1}
T_2<\min_{k\in \Z}(\pi/d_k)^2\qquad\text{and}\qquad s_k<(\pi/d_k)^2,\quad k\in\Z,
\end{gather}
and we assume, in addition, that the numbers $d_k$ satisfy
\begin{gather}\label{d:assump2}
\sum_{k\in\Z} d_k<\infty.
\end{gather}
In particular, this implies
\begin{gather}\label{dto0}
d_k\to 0\text{ as }k\to \pm\infty.
\end{gather}
Note that for $k\in\N$ the first condition in \eqref{d:assump1} implies the second condition since $s_k\in (T_1,T_2)$ for $k\in\N$, and hence the second condition 
is only needed for $d_k$ (and $s_k$) with index $k\in\Zm$.
Finally, we set (see Figure~\ref{fig1})
$$\I_k=(x_{k-1},x_k),\quad k\in\Z,$$
where 
\begin{gather*}
x_{0}=0,\quad x_k=
\left\{\begin{array}{lll}
  x_{k-1}+d_{k},&k\in\N,\\[1mm]
  x_{k+1}-d_{k+1},&k\in\mathbb{Z}\setminus(\N\cup \{0\}).
\end{array}\right.
\end{gather*}
The intervals $\I_k$ satisfy
$\cup_{k\in\Z}\,\overline{\I_k}=[\ell_-,\ell_+]$ with
$\ell_\pm$ in \eqref{Lpm}.
Due to \eqref{d:assump2} the interval 
$[\ell_-,\ell_+]$ is compact.

\subsection{Choice of the interaction strengths $\al_k$}

In  what  follows  we denote by $B_\delta(s)$ the open $\delta$-neighbourhood of $s\in\R$, i.e.
$$B_\delta(s)=(s-\delta,s+\delta).$$
Let us fix a sequence $ (\delta_k)_{k\in\N}$  of positive numbers with the properties
\begin{align}
\label{delta1}
&  
\overline{B_{\delta_k}(s_k)}\subset (T_1,T_2)\setminus\overline{\opset},\quad k\in\N,\\
&  \label{delta2}
\overline{B_{\delta_k}(s_k)}\cap \overline{B_{\delta_k}(s_l)}=\varnothing,\quad k\not=l.
\end{align}
The above choice of $\delta_k$ is always possible due to 
\eqref{Sprop3} and \eqref{Sprop:add1}.
Moreover, we claim that $\delta_k$ can be chosen so small that
\begin{gather}
\label{a-a}
\al_k^+-\al_k^-\leq c_k d_k\text{ with }c_k\to 0\text{ as }k\to\infty,
\end{gather}
where
\begin{gather}
\label{alpm}
\al_k^\pm=\FF_{d_k}^D(s_k\pm{1\over 2}\delta_k).
\end{gather}
In fact, \eqref{a-a} holds for $\delta_k$ small enough 
since the function $\FF_{d_k}^D$ in \eqref{F} is  continuous on $ (-\infty,(2\pi/ d_k)^2)$, and 
\begin{gather}
\label{sk:est}
s_k+{1\over 2}\delta_k<s_k+\delta_k<T_2<\min_{k\in \Z}(\pi/d_k)^2
\end{gather}
(cf.~\eqref{d:assump1}, \eqref{delta1}). 
Condition \eqref{a-a} will be required only in the last step of our construction; cf. Lemma~\ref{lemma:nrc}.
Note, that by \eqref{sk:est}
\begin{gather}\label{al<0}
\al_k^\pm<0,\qquad k\in\N,
\end{gather}
since $\FF_{d_k}^D((\pi/ d_k)^2)=0$ and 
$\FF_{d_k}^D$ is strictly increasing on the interval $(-\infty,(2\pi/ d_k)^2)$.

From now we shall work with sequences $(\al_k)_{k\in\Z}$ that satisfy the following hypothesis.

\begin{hypothesis}\label{hypo31}
The notation $\al=(\al_k)_{k\in\Z}$ is used for a sequence of real numbers with the properties 
\begin{gather}\label{al:cond}
\begin{array}{ll}
\al_k\in [\al_k^-,\al_k^+],& k\in\N,\\[2mm]
\al_k=\FF^D_{d_k}(s_k),&k\in\Z\setminus\N.
\end{array}
\end{gather}
\end{hypothesis}

\subsection{The decoupled operator $\H\aa$}\label{decosec}
Let $\al=(\al_k)_{k\in\Z}$ be any sequence satisfying Hypothesis~\ref{hypo31}.
For each $k\in\Z$ 
we consider the sesquilinear form $\hk^D_{\I_k,\al_k}$ and the associated self-adjoint operator $\Hk^D_{\I_k,\al_k}$ as in Section~\ref{sec:single:D}
with $\al$, $\I$, $x_-$, $x_+$, and $y$ replaced by
$\al_k$, $\I_k$, $x_{k-1}$, $x_k$
and $y_k={x_{k-1}+x_k\over 2}$, respectively.
The spectrum of $\Hk^D_{\I_k,\al_k}$ is discrete and by
Proposition~\ref{prop:HD:2} the first eigenvalue
$\lambda_1(\Hk^D_{\I_k,\al_k})$ is the unique solution of the equation
$\al_k=\FF^D_{d_k}(\lambda)$ in $(0,(2\pi/ d_k)^2)$. Therefore, taking into account the monotonicity and continuity of the function 
$\FF^D_{d_k}$, \eqref{alpm}, and \eqref{sk:est}  we conclude that
\begin{align}
\label{first-lambda1}
&\lambda_1(\Hk^D_{\I_k,\al_k})\in \overline{B_{\delta_k/2}(s_k)},\quad\, k\in\N,\\
\label{first-lambda2}
&\lambda_1(\Hk^D_{\I_k,\al_k})=s_k,\qquad\qquad k\in\Zm,
\end{align}
and Proposition~\ref{prop:HD:2} also gives
\begin{gather} 
\label{second-lambda}
\lambda_2(\Hk^D_{\I_k,\al_k})=\left({2\pi\over d_k}\right)^2,\qquad k\in\Z.
\end{gather}
It is clear from \eqref{first-lambda1} and \eqref{first-lambda2} that  
the forms $\hk^D_{\I_k,\al_k}$ are bounded from below by 
\begin{gather}
\label{sinf}
s_{\inf}=\inf\left\{\inf_{k\in\N} (s_k-\delta_k/2);\, \inf_{k\in\Zm} s_k\right\},
\end{gather}
and from \eqref{Sprop1} and \eqref{delta1} we conclude $s_{\inf}\geq \min\{T_1;\,\min S_\ess\}>-\infty.$

Following Appendix~\ref{A3} we consider the densely defined, semibounded, closed form
$$
\h\aa=\bigoplus_{k\in\Z}\,\hk^D_{\I_k,\al_k}\quad\text{in}\quad \L\interval=\bigoplus_{k\in\Z} \L(\I_k)
$$
and the corresponding self-adjoint operator $\H\aa$. In the present situation Proposition~\ref{aprop3+} and Theorem~\ref{th:A:spec} lead to the next statement.

\begin{theorem}\label{bigthm1}
Let $\al=(\al_k)_{k\in\Z}$ be a sequence satisfying Hypothesis~\ref{hypo31}. Then the self-adjoint operator $\H\aa$ associated to the form $\h\aa$
in $\L\interval$
is given by
\begin{equation*}
\begin{split}
(\H\aa u)\restriction_{\I_k}&=\Hk^D_{\I_k,\al_k} \u_k=-(\u_k\restriction_{\I_k\setminus\{y_k\}})'',\\
\dom(\H\aa)&=\left\{u\in\L\interval:\ \u_k\in\dom(\Hk^D_{\I_k,\al_k})\text{ and }
\ds\suml_{k\in\Z}\|\Hk^D_{\I_k,\al_k}\u_k\|^2_{\L(\I_k)}<\infty \right\},
\end{split}
\end{equation*}
where $\u_k$ stands for the restriction of $u$ on $\I_k$. Furthermore,  one has
\begin{gather}\label{th:deco1} 
\sigma_\ess(\H\aa)=S_\ess.
\end{gather}
\end{theorem}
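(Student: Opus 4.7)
The plan is to assemble Theorem~\ref{bigthm1} from the direct-sum machinery in Appendix~\ref{A3} and the single-interval spectral analysis of Section~\ref{sec:single:D}. Each form $\hk^D_{\I_k,\al_k}$ is densely defined and closed in $\L(\I_k)$ and, by \eqref{first-lambda1}--\eqref{first-lambda2}, they share the uniform lower bound $s_{\inf}$ from \eqref{sinf}. Proposition~\ref{aprop3+} then identifies the orthogonal sum $\h\aa=\bigoplus_{k\in\Z}\hk^D_{\I_k,\al_k}$ as a densely defined, semibounded, closed form in $\L\interval$ and shows that the associated self-adjoint operator is the orthogonal sum of the $\Hk^D_{\I_k,\al_k}$ on its natural maximal domain. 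Combined with the pointwise description of each $\Hk^D_{\I_k,\al_k}$ supplied by Proposition~\ref{prop:HD:1}, this yields exactly the action and domain of $\H\aa$ stated in the theorem.

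For the spectral identity \eqref{th:deco1} I would invoke Theorem~\ref{th:A:spec}: a value $\lambda\in\R$ lies in $\sigma_\ess(\H\aa)$ precisely when $\lambda$ is an accumulation point of the double-indexed family $\mathcal{E}\coloneqq\{\lambda_j(\Hk^D_{\I_k,\al_k}):j\in\N,\,k\in\Z\}$ (eigenvalues of infinite total multiplicity appear automatically as such accumulation points). Each summand has purely discrete spectrum tending to $+\infty$, so no accumulation arises within a single~$k$. Moreover, by \eqref{second-lambda} together with \eqref{dto0}, $\lambda_j(\Hk^D_{\I_k,\al_k})\geq (2\pi/d_k)^2\to\infty$ as $|k|\to\infty$ for every $j\geq 2$, so only finitely many elements of $\mathcal{E}$ with second index $j\geq 2$ lie below any prescribed finite level. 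Consequently the finite accumulation points of $\mathcal{E}$ coincide with those of the single sequence $\{\lambda_1(\Hk^D_{\I_k,\al_k})\}_{k\in\Z}$.

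To pin down this first-eigenvalue sequence I would split it according to the sign of $k$. For $k\in\Zm$, identity \eqref{first-lambda2} gives $\lambda_1(\Hk^D_{\I_k,\al_k})=s_k$, and by \eqref{Sprop1+} the accumulation set of $(s_k)_{k\in\Zm}$ equals $S_\ess$, yielding $S_\ess\subseteq\sigma_\ess(\H\aa)$. For the reverse inclusion only the $\N$-part remains, where by \eqref{first-lambda1} one has $|\lambda_1(\Hk^D_{\I_k,\al_k})-s_k|\leq \delta_k/2$. The main obstacle I foresee is precisely here: the windows $[s_k-\delta_k/2,s_k+\delta_k/2]$ are a priori of unrelated sizes, so it is not automatic that $\lambda_1(\Hk^D_{\I_k,\al_k})$ and $s_k$ share accumulation points. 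The resolution is that any convergent subsequence $s_{k_n}\to\mu$ with $k_n\in\N$ has $\mu\in[T_1,T_2]$ and, by \eqref{Sprop5}, $\mu\in S_\ess\cap[T_1,T_2]=\overline{\opset}$; combined with \eqref{delta1} this forces $\delta_{k_n}\leq\dist(s_{k_n},\overline{\opset})\leq |s_{k_n}-\mu|\to 0$, so $\lambda_1(\Hk^D_{\I_{k_n},\al_{k_n}})\to\mu\in S_\ess$. Hence every accumulation point of the $\N$-part lies in $S_\ess$, and \eqref{th:deco1} follows.
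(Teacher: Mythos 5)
Your proposal is correct and follows essentially the same route as the paper: Proposition~\ref{aprop3+} and Proposition~\ref{prop:HD:1} for the operator description, Theorem~\ref{th:A:spec} to reduce $\sigma_\ess(\H\aa)$ to the accumulation points of the eigenvalue family, the escape to $+\infty$ of all $\lambda_j$ with $j\ge 2$ via \eqref{second-lambda} and \eqref{dto0}, and the split of the first eigenvalues into the $\Zm$-part (giving $S_\ess$ exactly) and the $\N$-part (giving a subset of $S_\ess$). Your closing argument that $\delta_{k_n}\to 0$ along any subsequence with $s_{k_n}\to\mu\in\overline{\opset}$ is a correct and welcome elaboration of the step the paper dismisses as following ``easily'' from \eqref{delta1}, \eqref{delta2}, \eqref{first-lambda1}.
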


\begin{proof}
It is clear from Proposition~\ref{aprop3+} and Proposition~\ref{prop:HD:1} that the self-adjoint operator $\H\aa$ is given as in the theorem. Hence it remains
to verify \eqref{th:deco1}. In fact, by Theorem~\ref{th:A:spec} we have
\begin{gather}
\label{oplus-spec1}
\sigma_{\ess}(\H\aa)=\{\text{accumulation points of }S\},
\end{gather}
where $S=(\lambda_j(\Hk^D_{\I_k,\al_k}))_{j\in\N,k\in\Z}$ is the sequence of all eigenvalues of the operators
$\Hk^D_{\I_k,\al_k}$. 
By \eqref{dto0} and \eqref{second-lambda} one has
\begin{gather*}
\lambda_j(\Hk^D_{\I_k,\al_k})\to\infty\text{ as }|k|\to\infty,\qquad j\geq 2,
\end{gather*}
and using \eqref{Sprop1+} and \eqref{first-lambda2} we get
\begin{gather*}\label{prop-ess2}
\left\{\text{accumulation points of }(\lambda_1(\Hk^D_{\I_k,\al_k}))_{k\in\Z\setminus\N}\right\}=S_\ess.
\end{gather*}
Finally, it follows easily from \eqref{delta1}, \eqref{delta2}, \eqref{first-lambda1} that
$$\left\{\text{accumulation points of }(\lambda_1(\Hk^D_{\I_k,\al_k}))_{k\in\N}\right\}=
\bigl\{\text{accumulation points of }(s_k)_{k\in\N}\bigr\}.$$
Therefore, taking into account \eqref{Sprop5}, we get
\begin{gather}\label{prop-ess3}
\left\{\text{accumulation points of }(\lambda_1(\Hk^D_{\I_k,\al_k}))_{k\in\N}\right\}\subset S_\ess.
\end{gather}
Combining \eqref{oplus-spec1}--\eqref{prop-ess3} 
we arrive at \eqref{th:deco1}.
\end{proof}

\begin{remark}
If $\al=(\al_k)_{k\in\Z}$ is a sequence satisfying Hypothesis~\ref{hypo31} and $\H\aa$ is the self-adjoint operator in the previous theorem then one also has
\begin{equation*}
\sigma_{\disc}(\H\aa)\cap (T_1,T_2)=
\left\{\lambda_1(\Hk^D_{\I_k,\al_k}):\ k\in\N\right\}
\end{equation*}
and each of these eigenvalues is simple, i.e. 
$\dim\ker(\H\aa-\lambda_1(\Hk^D_{\I_k,\al_k})\Id)=1$; these facts follow again from 
Proposition~\ref{prop:HD:2}, \eqref{delta1}, \eqref{delta2}, \eqref{first-lambda1}, \eqref{second-lambda} and Theorem~\ref{th:A:spec}. 
In particular, if $\al_k=\FF_{d_k}^D(s_k)$ for \textbf{all} $k\in\Z$, then
\begin{gather*}
\sigma_{\disc}(\H\aa)\cap (T_1,T_2)=S_\disc.
\end{gather*}
\end{remark}

The following lemma on the semiboundedness of the forms $\hk_{\I_k, \al_k}$ will be used later.

\begin{lemma}\label{neumannchen}
Let $\al=( \al_k)_{k\in\Z}$ be a sequence satisfying Hypothesis~\ref{hypo31}. Then there exists a constant
$C>0$ which depends only on the quantity $s_{\inf}$ in \eqref{sinf} and the interval $\interval$
such that for all $k\in\Z$ 
\begin{gather}\label{nonnegative}
\hk_{\I_k, \al_k}[\u,\u]+{C\over d_k^2}\|\u\|^2_{\L(\I_k)}\geq 0,\qquad  \u\in\W^{1,2}(\I_k).
\end{gather}  
\end{lemma}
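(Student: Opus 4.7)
The strategy is to write $\hk_{\I_k,\al_k}[\u,\u]=\|\u'\|^2_{\L(\I_k)}+\al_k|\u(y_k)|^2$ and control the (potentially negative) boundary term by a trace inequality with a free length scale $d'$, combined with a uniform a priori estimate on $|\al_k|d_k$. Since the form is manifestly non-negative when $\al_k\ge 0$, only the case $\al_k<0$ requires work.

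The first step is the a priori bound $|\al_k|d_k\le C_0$ with $C_0$ depending only on $s_{\inf}$ and $\ell_+-\ell_-$. By Proposition~\ref{prop:HD:2} we have $\al_k=\FF^D_{d_k}(\lambda_1(\Hk^D_{\I_k,\al_k}))$, and Hypothesis~\ref{hypo31} combined with \eqref{first-lambda1}, \eqref{first-lambda2} and the definition \eqref{sinf} yields $\lambda_1(\Hk^D_{\I_k,\al_k})\ge s_{\inf}$. Since $\al_k<0$ forces $\lambda_1(\Hk^D_{\I_k,\al_k})<(\pi/d_k)^2$, monotonicity of $\FF^D_{d_k}$ on $(-\infty,(\pi/d_k)^2)$ gives $|\al_k|\le|\FF^D_{d_k}(s_{\inf})|$. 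Combining the explicit formula \eqref{F} with the elementary inequality $x\coth(x)\le 1+x$ for $x\ge 0$, and using $d_k\le \ell_+-\ell_-$, yields a valid constant, e.g.\ $C_0=4+2(\ell_+-\ell_-)\sqrt{\max(-s_{\inf},0)}$.

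The second step is the scaled trace inequality: for every $d'\in(0,d_k]$ and $\u\in\W^{1,2}(\I_k)$,
\begin{gather*}
|\u(y_k)|^2\le\frac{2}{d'}\|\u\|^2_{\L(\I_k)}+d'\|\u'\|^2_{\L(\I_k)},
\end{gather*}
obtained by applying Cauchy--Schwarz to $\u(y_k)-\u(x)=\int_x^{y_k}\u'(t)\,dt$ to get $|\u(y_k)|^2\le 2|\u(x)|^2+d'\|\u'\|^2_{\L(\I_k)}$ for $|x-y_k|\le d'/2$, and then averaging over $x$ in an interval of length $d'$ around $y_k$ contained in $\I_k$. Substituting into the form and choosing $d'=\min(d_k,\,1/(2|\al_k|))$ produces
\begin{gather*}
\hk_{\I_k,\al_k}[\u,\u]\ge (1-|\al_k|d')\|\u'\|^2_{\L(\I_k)}-\frac{2|\al_k|}{d'}\|\u\|^2_{\L(\I_k)}\ge -\frac{C}{d_k^2}\|\u\|^2_{\L(\I_k)},
\end{gather*}
where the first coefficient is always $\ge 1/2$ by the choice of $d'$, and the second one is controlled via $|\al_k|d_k\le C_0$: in the regime $d'=1/(2|\al_k|)$ it equals $4|\al_k|^2\le 4C_0^2/d_k^2$, whereas in the regime $d'=d_k$ one has $|\al_k|d_k<1/2$, giving a bound $1/d_k^2$. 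Hence \eqref{nonnegative} holds with $C=\max(4C_0^2,1)$.

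The main obstacle is the a priori bound $|\al_k|d_k\le C_0$, which precisely prevents $\al_k$ from being too negative relative to the local scale $d_k$; its proof rests on the uniform lower bound $\lambda_1(\Hk^D_{\I_k,\al_k})\ge s_{\inf}$ built into Hypothesis~\ref{hypo31}, together with the explicit monotone structure of $\FF^D_{d_k}$. Once this is secured, the free scale $d'$ in the trace inequality is the standard device that distributes the negative contribution of the $\delta$-term between the kinetic and $\L$-parts of the energy.
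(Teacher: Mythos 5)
Your proof is correct. The first half coincides with the paper's argument: both establish the a priori bound $|\al_k|\leq \widehat C/d_k$ (equivalently $|\al_k|d_k\le C_0$) from the monotonicity of $\FF^D_{d_k}$, the lower bound $\lambda_1(\Hk^D_{\I_k,\al_k})\ge s_{\inf}$ encoded in Hypothesis~\ref{hypo31}, and the explicit $\coth$-formula together with $d_k<\ell_+-\ell_-$; your use of $x\coth x\le 1+x$ is just a slightly different way of extracting a $k$-independent constant than the paper's appeal to the monotonicity of $x\mapsto x\coth x$. The second half is where you genuinely diverge: the paper converts the bound on $\al_k$ into the form estimate by observing that the lower bound of $\hk_{\I_k,\al_k}$ on $\W^{1,2}(\I_k)$ is exactly $\lambda_1(\Hk^N_{\I_k,\al_k})=\Lambda^N_{\al_k,d_k}$, and then uses the monotonicity of $\FF^N_{d}$ plus the scaling of the transcendental equation $4\widehat\lambda\tanh\widehat\lambda=\widehat C$ to see that this eigenvalue is $\ge -4\widehat\lambda^2/d_k^2$. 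You instead absorb the $\delta$-term directly via the scaled trace inequality $|\u(y_k)|^2\le \tfrac{2}{d'}\|\u\|^2+d'\|\u'\|^2$ with the optimized scale $d'=\min(d_k,1/(2|\al_k|))$. Both are valid; the paper's route reuses the Neumann spectral machinery from Section~\ref{sec:single:N} and in principle gives the sharp constant, while yours is more elementary and self-contained -- and in fact uses precisely the Sobolev inequality \eqref{Sobolev} that the paper itself invokes later in the proof of Lemma~\ref{lemma:nrc}. All the quantitative details in your argument (the constant $C_0$, the two regimes for $d'$, and the final $C=\max(4C_0^2,1)$) check out.
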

 
 \begin{proof}
Consider the function $\FF_{d_k}^D$ defined by \eqref{F} and recall that it is monotonically increasing on 
$(-\infty,(2\pi/ d_k)^2)$ and $\FF_{d_k}^D((\pi/ d_k)^2)=0$. 
It follows from Hypothesis~\ref{hypo31}, the choice of $d_k$ in \eqref{d:assump1}, and \eqref{sinf} that  
\begin{gather}
\label{alfa-est1}
0>\alpha_k\geq \FF_{d_k}^D( s_{\inf}).
\end{gather}
In the case $s_{\inf}\ge 0$ we have the estimate 
\begin{gather*}
\FF_{d_k}^D(s_{\inf})\ge \FF_{d_k}^D(0)=-{4\over d_k},
\end{gather*}
and in the case $s_{\inf}< 0$ we make use of the fact that
the function $x\mapsto x\coth(x)$ is increasing on $[0,\infty)$ and $d_k< \ell_+-\ell_-$ to derive
\begin{equation}\label{alfa-est3}
\begin{split}
\FF_{d_k}^D(s_{\inf})&=
-{4\over d_k}\cdot {d_k\sqrt{-s_{\inf}}\over 2}\coth {d_k\sqrt{-s_{\inf}}\over 2}
\\
&\ge
-{4\over d_k}\cdot {(\ell_+-\ell_-)\sqrt{-s_{\inf}}\over 2}\coth {(\ell_+-\ell_-)\sqrt{-s_{\inf}}\over 2}.
\end{split}
\end{equation}
Combining \eqref{alfa-est1}--\eqref{alfa-est3} we conclude that
there exists a constant $\widehat C>0$ which depends only on $s_{\inf}$ and $\interval$ such that
\begin{gather}
\label{alpha-inf}
\alpha_k\geq -{\widehat C\over d_k},\qquad k\in\Z.
\end{gather}
By Proposition~\ref{prop:HN:2} we have
\begin{gather*}
\lambda_1(\Hk^N_{\I_k, \al_k})=\Lambda^N_{\al_k,d_k},
\end{gather*}
where $\Lambda^N_{\al_k,d_k}$ is the unique solution of $\al_k=\FF^N_{d_k}(\lambda)$ in $(-\infty,(\pi/d_k)^2)$. 
Using \eqref{alpha-inf} and the monotonicity of the function $\FF^N_{d}$ we obtain 
\begin{gather*}
\lambda_1(\Hk^N_{\I_k, \al_k})\geq\Lambda^N_{-{\widehat C\over d_k},d_k}=-{4\widehat\lambda^2\over d_k^2},
\qquad\text{where}\,\,\,\widehat\lambda=\frac{d_k}{2}\sqrt{-\Lambda^N_{-{\widehat C\over d_k},d_k}}>0
\end{gather*} 
is the unique solution of 
$4\widehat\lambda\tanh\widehat\lambda=\widehat C$.
From this we finally conclude 
that there exists $C>0$   
such that \eqref{nonnegative} holds.
 \end{proof}

\section{The coupled operator $\H\aab$\label{sec:co}}

In this short section we introduce the self-adjoint operator $\H\aab$ in $\L\interval$ and reformulate our main result Theorem~\ref{th:main} in a more rigorous form.
The operator $\H\aab$ corresponding to the form $\h\aab$ below will be our main object of interest in this paper, it can be viewed as
perturbation of the decoupled operator $\H\aa$ in the sense that  {all} neighbouring intervals $\I_{k}$ and $\I_{k+1}$ are glued together via 
$\delta$-couplings of sufficiently large interaction strengths.

Let $\al=(\al_k)_{k\in\Z}$ be a sequence which satisfies Hypothesis~\ref{hypo31} and 
let $\be=(\be_k)_{k\in\Z}$ be another sequence of real numbers.
In the space $\L\interval$ we introduce the symmetric sesquilinear form $\h\aab$ by  
\begin{equation}\label{999}
\begin{split}
 \h\aab[u,v]&=\suml_{k\in\Z} \hk_{\I_k, \al_k,\be_{k-1},\be_k}[\u_k,\vv_k],\\
\dom(\h\aab)&=\left\{
u\in \W^{1,2}_{\rm loc}\interval\cap\L\interval :\
\suml_{k\in\Z}|\hk_{\I_k, \al_k,\be_{k-1},\be_k}[\u_k,\u_k]|<\infty
\right\},
\end{split}
\end{equation}
where $\u_k=u\restriction_{\I_k}$, $\vv_k= v\restriction_{\I_k}$, and the forms $\hk_{\I_k, \al,\be_{k-1},\be_k}$  are defined as in \eqref{R-expression}.

\begin{lemma}\label{lemma:clo}
There exists a  sequence $\be^{\inf}=(\be_k^{\inf})_{k\in\Z}$ of real numbers such that 
for any sequence $\be=(\beta_k)_{k\in\Z}$ satisfying
\begin{gather}\label{beta:large}
\beta_k^{\inf}\leq\beta_k<\infty
\end{gather}
and any sequence $ \al=( \al_k)_{k\in\Z}$ satisfying Hypothesis~\ref{hypo31}
the form $\h\aab$ is densely defined, closed, and bounded from below by
$s_{\inf}-1$, where $s_{\inf}$ is the quantity specified in \eqref{sinf}. 
\end{lemma}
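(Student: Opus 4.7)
The plan is to check in succession that $\dom(\h\aab)$ is dense in $\L\interval$, that $\h\aab[u,u]\ge(s_{\inf}-1)\|u\|^2_{\L\interval}$, and that $\h\aab$ is closed. Density is essentially immediate: any $\varphi\in C^\infty_c\interval$ has support compactly contained in $\interval$, which---since $x_k\to\ell_\pm$ as $k\to\pm\infty$---meets only finitely many of the intervals $\I_k$; hence the sum in \eqref{999} is actually finite and $C^\infty_c\interval\subset\dom(\h\aab)$ is already dense in $\L\interval$.

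For the lower bound I would exploit that each summand in \eqref{999} is precisely the Robin form $\hk^R_{\I_k,\al_k,\be_{k-1},\be_k}[\u_k,\u_k]$ of Section~\ref{sec:single:R}, and is therefore bounded below by $\mu_k(\al_k,\be_{k-1},\be_k)\|\u_k\|^2_{\L(\I_k)}$, where $\mu_k:=\lambda_1(\Hk^R_{\I_k,\al_k,\be_{k-1},\be_k})$. By the min-max principle $\mu_k$ is monotonically nondecreasing in each of $\be_{k-1}$ and $\be_k$, and by Proposition~\ref{prop:HR:2} it converges to $\lambda_1(\Hk^D_{\I_k,\al_k})$ as $\min\{\be_{k-1},\be_k\}\to+\infty$. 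The limit satisfies $\lambda_1(\Hk^D_{\I_k,\al_k})\ge s_{\inf}$ uniformly in $k$ and in admissible $\al_k$ by \eqref{first-lambda1}, \eqref{first-lambda2}, and the definition \eqref{sinf}. For $k\in\N$ the parameter $\al_k$ ranges over the compact interval $[\al_k^-,\al_k^+]$, and since both $\mu_k$ and the limit $\Lambda^D_{\al_k,d_k}$ are continuous in $\al_k$ (Proposition~\ref{prop:HD:2} together with standard perturbation theory), Dini's theorem upgrades the monotone pointwise convergence in $\be$ to uniform convergence on $[\al_k^-,\al_k^+]$. I would therefore pick $\be_k^{\inf}$ so large that $\mu_k(\al_k,\be_{k-1},\be_k)\ge s_{\inf}-1$ for every admissible $\al_k$ and every $\be_{k-1},\be_k\ge\be_k^{\inf}$; for $k\in\Zm$ the value of $\al_k$ is fixed and no such uniformity is needed. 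Summing these pointwise bounds over $k$ then delivers $\h\aab[u,u]\ge(s_{\inf}-1)\|u\|^2_{\L\interval}$.

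Closedness will follow by a direct-sum-type limiting argument. If $\{u_n\}$ is Cauchy in the form norm $\|\cdot\|_F\coloneqq(\h\aab[\cdot,\cdot]+(2-s_{\inf})\|\cdot\|^2_{\L\interval})^{1/2}$, then each summand $\hk^R_{\I_k,\al_k,\be_{k-1},\be_k}[\u_{n,k}-\u_{m,k},\u_{n,k}-\u_{m,k}]+(2-s_{\inf})\|\u_{n,k}-\u_{m,k}\|^2_{\L(\I_k)}$ is nonnegative and dominated by $\|u_n-u_m\|_F^2$, so for each fixed $k$ the restriction $\u_{n,k}$ is Cauchy in the closed Robin form of Section~\ref{sec:single:R}, whose graph norm is equivalent to the $\W^{1,2}(\I_k)$-norm. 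Hence $\u_{n,k}$ converges in $\W^{1,2}(\I_k)$, and by Sobolev embedding $\W^{1,2}(\I_k)\hookrightarrow\C(\overline{\I_k})$ also uniformly on $\overline{\I_k}$; this preserves the matching $u_n(x_k-0)=u_n(x_k+0)$ in the limit, placing $u\in\W^{1,2}_{\mathrm{loc}}\interval$. Two applications of Fatou's lemma---one to certify that $\sum_k\hk^R_{\I_k,\al_k,\be_{k-1},\be_k}[\u_k,\u_k]$ is absolutely summable (so $u\in\dom(\h\aab)$), and a second to bound the tail $\sum_{|k|>N}$ uniformly in $n$---then yield $\|u_n-u\|_F\to 0$.

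The main technical obstacle I anticipate is producing the uniform-in-$\al_k$ choice of $\be_k^{\inf}$ across the range $[\al_k^-,\al_k^+]$; without this the lower bound would depend on the particular sequence $\al$, whereas the statement requires a single $\be^{\inf}$ valid for every admissible $\al$. This uniformity is exactly what the Dini step provides; the rest is essentially bookkeeping.
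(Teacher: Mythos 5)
Your proposal is correct and follows essentially the same route as the paper: the per-interval lower bound via the Robin forms $\hk^R_{\I_k,\al_k,\be_{k-1},\be_k}$ and Proposition~\ref{prop:HR:2}, the identification of $\h\aab$ as the restriction of the direct sum of these forms to $\W^{1,2}_{\rm loc}\interval$, and closedness via componentwise $\W^{1,2}(\I_k)$-convergence plus preservation of the continuity condition at the points $x_k$ (you use the Sobolev embedding where the paper invokes the trace theorem, and you re-derive the closedness of the direct sum by Fatou where the paper cites its Appendix~A; these are the same mathematics). The one place where you genuinely add something is the uniformity of $\be_k^{\inf}$ over $\al_k\in[\al_k^-,\al_k^+]$: the paper's proof passes over this silently, and your Dini argument closes the gap. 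Note, though, that Dini is more than is needed: since $\hk_{\I,\al}[\u,\u]=\|\u'\|^2_{\L(\I)}+\al|\u(y)|^2$ is monotone in $\al$, the min-max principle gives $\lambda_1(\Hk^R_{\I_k,\al_k,\be_{k-1},\be_k})\ge\lambda_1(\Hk^R_{\I_k,\al_k^-,\be_{k-1},\be_k})$ for all admissible $\al_k$, and $\lambda_1(\Hk^D_{\I_k,\al_k^-})=s_k-\tfrac12\delta_k\ge s_{\inf}$, so it suffices to choose $\be_k^{\inf}$ for the single worst-case value $\al_k^-$ (for $k\in\Zm$ the value $\al_k$ is fixed anyway). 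With that simplification the rest of your argument is, as you say, bookkeeping.
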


\begin{proof}
For $k\in\Z$ consider the densely defined, closed, semibounded form $\hk^R_{\I_k, \al_k,\be_{k-1},\be_k}$ and the associated self-adjoint operator
$\Hk^R_{\I_k, \al_k,\be_{k-1},\be_k}$ as in Section~\ref{sec:single:R},
with $\al$, $\I$, $\beta_-$, and $\beta_+$ replaced by
$\al_k$, $\I_k$, $\be_{k-1}$, and $\be_k$, respectively.
By Proposition~\ref{prop:HR:2}   we have
\begin{gather}
\label{EV-conv12}
\lambda_1(\Hk^R_{\I_k,  \al_k,\be_{k-1},\be_k})\nearrow\lambda_1(\Hk^D_{\I_k,  \al_k }) \text{ as }\min\{\beta_{k-1},\beta_{k}\}\to\infty
\end{gather}
for any $k\in\Z$ and hence we conclude from
\eqref{EV-conv12} that there exists a sequence $\be^{\inf}=(\be_k^{\inf})_{k\in\Z}$ such that 
$$\lambda_1(\Hk^R_{\I_k,  \al_k,\be_{k-1},\be_k})\geq \lambda_1(\Hk^D_{\I_k,  \al_k })-1.$$
Taking into account \eqref{first-lambda1}--\eqref{first-lambda2}, we get
\begin{gather*}%\label{lowerbound}
\hk^R_{\I_k, \al_k,\be_{k-1},\be_k}[\u,\u] \geq (s_{\inf}-1)\|\u\|^2_{\L(\I_k)},\qquad \u\in \W^{1,2}(\I_k),\,k\in  \Z,
\end{gather*}
for any sequence $\be=(\beta_k)_{k\in\Z}$ satisfying \eqref{beta:large} and any sequence $ \al=( \al_k)_{k\in\Z}$ satisfying Hypothesis~\ref{hypo31}.

Following Appendix~\ref{A3} we introduce in $\L\interval$ the densely defined, closed form 
$$\widetilde\h\aab= \bigoplus_{k\in\Z } \hk^R_{\I_k, \al_k,\be_{k-1},\be_k},$$
that is,
\begin{equation*}
\begin{split}
 \widetilde\h\aab[u,v]&=\suml_{k\in\Z} \hk^R_{\I_k, \al_k,\be_{k-1},\be_k}[\u_k,\vv_k],\\
\dom(\widetilde\h\aab)&=\left\{
u\in \L\interval:\,\u_k\in \W^{1,2}(\I_k),
\suml_{k\in\Z}|\hk^R_{\I_k, \al_k,\be_{k-1},\be_k}[\u_k,\u_k]|<\infty
\right\},
\end{split}
\end{equation*}
which is bounded from below by $s_{\inf}-1$. Observe that the form $\h\aab$ in \eqref{999} is the restriction of $\widetilde\h\aab$ onto 
 \begin{gather}\label{forms-doms}
\dom(\h\aab)=\dom(\widetilde\h\aab)\cap\W^{1,2}_{\rm loc}\interval.
\end{gather}
This implies that $\h\aab$ is also bounded from below by $s_{\inf}-1$ and {it is clear from \eqref{999}} that $\h\aab$ is densely defined in $\L\interval$.
It remains to verify that $\h\aab$ is closed. For this consider $\dom(\h\aab)$ equipped with the form norm 
$$
\|v\|_{ \al,\be} =\left(\h\aab[v,v]-(s_{\inf}-1)\|v\|^2_{\L\interval}+\|v\|^2_{\L\interval}\right)^{1/2}
$$
and let $(u^n)_{n\in\N}$ be a Cauchy sequence in $\dom(\h\aab)$ with respect to this norm. Then $(u^n)_{n\in\N}$ 
is also a Cauchy sequence in $\dom(\widetilde\h\aab)$ equipped with the form norm
$$\|v\|_{\widetilde{ \al,\be}} =\left(\widetilde\h\aab[v,v]-(s_{\inf}-1)\|v\|^2_{\L\interval}+\|v\|^2_{\L\interval}\right)^{1/2}$$
and as $\widetilde\h\aab$ is closed there exists a limit $u\in\dom(\widetilde\h\aab)$. 
It is clear that for each $k\in\Z$ the restrictions $(\u_k^n)_{n\in\N}$ are Cauchy sequences in $\dom(\hk^R_{\I_k, \al_k,\be_{k-1},\be_k})$
equipped with the corresponding form norm, which is equivalent to the usual $\W^{1,2}(\I_k)$-norm, and we have 
$\u_k^n\rightarrow \u_k$ 
as $n\rightarrow\infty$ in $\W^{1,2}(\I_k)$, where $\u_k=u\restriction_{\I_k}$.
Then  by the trace theorem  
\begin{gather}\label{conv:n}
\u^n_k(x_k-0)\to \u_k(x_k-0)\quad\text{and}\quad \u^n_k(x_{k-1}+0)\to \u_k(x_{k-1}+0)\text{ as }n\to\infty.
\end{gather}
Since $u^n\in \W^{1,2}_{\rm loc}\interval$ we have the continuity condition
\begin{gather*}
\u^n_k(x_{k-1}+0)=\u^n_{k-1}(x_{k-1}-0) ,\qquad k\in\Z,
\end{gather*}
and  {together with \eqref{conv:n} and $\u_k\in \W^{1,2}(\I_k)$ for each $k\in\Z$}, we conclude
\begin{gather}\label{trace-eq}
u\in \W^{1,2}_{\rm loc}\interval.
\end{gather}
It follows from \eqref{forms-doms} and  \eqref{trace-eq}  that $u\in\dom (\h\aab)$, thus 
the form $\h\aab$ is closed.
\end{proof}

Besides Hypothesis~\ref{hypo31} we will also assume that next hypothesis is satisfied.

\begin{hypothesis}\label{hypo41}
The sequence 
$\be=(\beta_k)_{k\in\Z}$ satisfies the condition $\beta_k^{\inf}\leq\beta_k<\infty$ in \eqref{beta:large} and moreover,
without loss of generality, we assume that 
\begin{gather}
\label{be:geq:0}
\be_k> 0,\quad k\in\Z.
\end{gather}
It is clear that any sequence $\widehat\be=(\widehat\be_k)_{k\in\Z}$ such that $\be_k\le \widehat\be_k$ for all $k\in\Z$ also satisfies 
\eqref{beta:large} and \eqref{be:geq:0}.
\end{hypothesis}

The self-adjoint operator associated to the form $\h\aab$ is denoted by $\H\aab$. 
It is not difficult to verify the next statement.

\begin{proposition}
Let $\al=( \al_k)_{k\in\Z}$ and $\be=( \be_k)_{k\in\Z}$ be sequences satisfying Hypothesis~\ref{hypo31} and Hypothesis~\ref{hypo41}.
Then the self-adjoint operator $\H\aab$ in $\L\interval$  associated to the form $\h\aab$ is 
bounded from below by $s_{\inf}-1$ and is 
given by
\begin{equation*}
\begin{split}
(\H\aab u)\restriction_{\I_k\setminus\{y_k\}}&= 
(- \u_k\restriction_{\I_k\setminus\{y_k\}})'',\\
 \dom(\H\aab)&=\left\{ \begin{matrix} u=(\u_k)_{k\in\Z}\in \L\interval, \\ \u_k\in \W^{2,2}\left(\I_k\setminus\{y_k\}\right)\end{matrix}:\ \begin{matrix}
u(y_k +0)=u(y_k -0),\\
u(x_k +0)=u(x_k -0),\\
\, u'(y_k+0)-u'(y_k -0)= \al_k u( y_k\pm 0 ),\\
u'(x_k+0)-u'(x_k -0)=\be_k u( x_k\pm 0 )\,
\end{matrix}
\right\}.
\end{split}
\end{equation*}
\end{proposition}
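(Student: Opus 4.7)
The plan is to apply the first representation theorem to the form $\h\aab$. Since Lemma~\ref{lemma:clo} has already ensured that $\h\aab$ is densely defined, closed, and bounded below by $s_{\inf}-1$, \cite[Chapter~6, Theorem~2.1]{K66} produces a unique self-adjoint operator $\H\aab$ that inherits this lower bound and satisfies $(\H\aab u, v)_{\L\interval} = \h\aab[u,v]$ for every $u \in \dom(\H\aab)$ and $v \in \dom(\h\aab)$. What remains is to convert this abstract identity into the explicit action and interface conditions stated in the proposition, via standard localized test-function arguments.

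The continuity conditions come for free. Any $u \in \dom(\H\aab) \subset \dom(\h\aab) \subset \W^{1,2}_{\rm loc}\interval$ is continuous at each $x_k$, giving $u(x_k-0) = u(x_k+0)$; moreover each restriction $\u_k = u\restriction_{\I_k}$ lies in $\W^{1,2}(\I_k) \hookrightarrow \C(\overline{\I_k})$, yielding $u(y_k-0) = u(y_k+0)$ at each interior point $y_k$ as well.

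For the piecewise action and jump conditions I would plug carefully chosen test functions $v \in \dom(\h\aab)$ into the defining identity. First, $v \in C_c^\infty(\I_k \setminus \{y_k\})$ extended by zero kills every $\delta$- and endpoint-term in $\h\aab[u,v]$ and leaves $(\u_k', v')_{\L(\I_k)} = ((\H\aab u)\restriction_{\I_k}, v)_{\L(\I_k)}$; since the right-hand side lies in $\L(\I_k)$, this forces $\u_k \in \W^{2,2}(\I_k\setminus\{y_k\})$ with $-\u_k'' = (\H\aab u)\restriction_{\I_k}$, i.e., the claimed action. Next, taking $v \in C_c^\infty(\I_k)$ with $v(y_k)\ne 0$ and integrating by parts on the two halves of $\I_k\setminus\{y_k\}$ produces a residual boundary term $\bigl(\u_k'(y_k-0) - \u_k'(y_k+0)\bigr)\overline{v(y_k)}$; equating it with the $\al_k$-term of $\h\aab[u,v]$ yields $u'(y_k+0) - u'(y_k-0) = \al_k u(y_k\pm 0)$. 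Finally, for the jump at $x_k$, I would choose $v \in \dom(\h\aab)$ supported in a small neighborhood of $x_k$ avoiding all other interaction points and with $v(x_k)\ne 0$; integration by parts on both $\I_k$ and $\I_{k+1}$, together with the continuity $\u_k(x_k) = \u_{k+1}(x_k) = u(x_k)$, reduces the identity to $u'(x_k+0) - u'(x_k-0) = \be_k u(x_k\pm 0)$.

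The converse inclusion, namely that every $u \in \L\interval$ whose restrictions satisfy $\u_k \in \W^{2,2}(\I_k\setminus\{y_k\})$ together with all the matching conditions and with the piecewise second derivative in $\L\interval$, lies in $\dom(\H\aab)$, follows by reversing these integration-by-parts arguments: for any $v \in \dom(\h\aab)$, integrating by parts on each $\I_k\setminus\{y_k\}$ and invoking the jump relations cancels all boundary contributions at the points $y_k$ and $x_k$ against the $\al_k$- and $\be_k$-terms of $\h\aab[u,v]$, yielding $\h\aab[u,v] = (-u'', v)_{\L\interval}$. The main delicate point I expect is the justification of termwise integration by parts in the infinite sum over $k$ and the telescoping of boundary contributions at $x_k$ as $|k|\to\infty$; this should be controlled by the absolute summability built into the definition of $\dom(\h\aab)$ in \eqref{999} together with the $\L\interval$-membership of both $u$ and $-u''$.
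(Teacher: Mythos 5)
Your proposal is correct, and it is precisely the argument the paper has in mind: the authors state this proposition without proof (``it is not difficult to verify''), and your test-function derivation is the same technique they invoke for Proposition~\ref{prop:HD:1}, now combined with the fact from Lemma~\ref{lemma:clo} that $\h\aab$ is densely defined, closed and bounded below by $s_{\inf}-1$, so that the first representation theorem applies. The forward direction (localized $v\in C_c^\infty$ near $y_k$, respectively near $x_k$, recovering the action $-\u_k''$ and the two jump conditions) is complete as written. The one point you flag but do not fully carry out is indeed where the only real work lies: in the converse inclusion one must first check that a $u$ satisfying the stated interface conditions with $\sum_{k}\|\u_k''\|^2_{\L(\I_k)}<\infty$ (a condition left implicit in the proposition) actually belongs to $\dom(\h\aab)$, i.e.\ that $\sum_{k}|\hk_{\I_k,\al_k,\be_{k-1},\be_k}[\u_k,\u_k]|<\infty$; here the telescoping of the boundary terms at $x_k$ only controls the partial sums, and one upgrades this to absolute summability by using the uniform lower bound $\hk_{\I_k,\al_k,\be_{k-1},\be_k}[\u_k,\u_k]\geq (s_{\inf}-1)\|\u_k\|^2_{\L(\I_k)}$ from the proof of Lemma~\ref{lemma:clo}, which makes the summands nonnegative after a summable shift. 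With that remark added, the proof is complete.
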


Now we are ready to formulate a rigorous version of our main result Theorem~\ref{th:main}.

\begin{theorem}[Main Theorem] 
\label{th:main:pre}
There exist  sequences  $\al=(  \al_k)_{k\in\Z}$ and $\be=(\be_k)_{k\in\Z}$ satisfying 
Hypothesis~\ref{hypo31} and Hypothesis~\ref{hypo41} such that the essential spectrum of the self-adjoint operator $\H\aab$
coincides with $S_{\ess}$, 
\begin{gather}\label{main-ess}
 \sigma_{\ess}(\H\aab)=S_{\ess},
\end{gather}
the discrete spectrum in $(T_1,T_2)$ coincides with $S_{\disc}$,
\begin{gather}\label{main-disc}
 \sigma_{\disc}(\H\aab)\cap (T_1,T_2)= S_{\disc},
\end{gather}
and each eigenvalue $\lambda\in \sigma_{\disc}(\H\aab)\cap (T_1,T_2)$ is simple, i.e. $\dim(\ker(\H\aab-\lambda\Id))=1$.
\end{theorem}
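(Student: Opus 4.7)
The plan is to follow the three-step scheme outlined in the introduction. First I would fix, once and for all, the intervals $\I_k$, the thresholds $\delta_k$, and the auxiliary sequence $\widehat S_\ess = (s_k)_{k\in\Zm}$ satisfying \eqref{Sprop1+}--\eqref{Sprop2+} in the manner of Section~\ref{sec:deco}, with $d_k$ so small that \eqref{d:assump1}, \eqref{d:assump2}, and \eqref{a-a} all hold. For the decoupled operator $\H\aa$ (corresponding to $\beta_k=\infty$) Theorem~\ref{bigthm1} already yields $\sigma_\ess(\H\aa)=S_\ess$, and the remark after it combined with \eqref{first-lambda1}--\eqref{second-lambda} shows that the discrete spectrum in $(T_1,T_2)$ consists of exactly one simple eigenvalue in each $\overline{B_{\delta_k/2}(s_k)}$, one for every $k\in\N$. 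The goal is then to retain this pattern after reintroducing finite $\delta$-couplings at all $x_k$, and to tune the $\al_k$ so that each of these eigenvalues lands exactly on the prescribed $s_k$.

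Next I would carry out Step 2. For every $n\in\N$ consider the partly coupled operator $\H\aab^n$ obtained from $\H\aa$ by setting the coupling strengths at $x_k$ equal to $\beta_k$ for $|k|\leq n-1$ (the others remain ``Dirichlet decoupled''). Since this modification is a finite-rank form perturbation of $\H\aa$ in the sense of the forms \eqref{R-expression}, Weyl's theorem together with Theorem~\ref{bigthm1} gives \eqref{ess:n}. For the discrete spectrum inside $(T_1,T_2)$ one first picks $\be^{\inf}$ large enough that Proposition~\ref{prop:HR:2}, applied locally on the block containing a given $s_k$, forces all $j$-th Robin eigenvalues near the Dirichlet ones uniformly in $n$; this yields \eqref{disc:n12} by a continuity-of-eigenvalues and pairing-with-$\sigma(\H\aa)$ argument. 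Then, with $\be$ fixed, I would vary the finitely many parameters $(\al_1,\dots,\al_n)$ within $[\al_k^-,\al_k^+]$ and invoke the multidimensional intermediate value theorem from \cite{HKP97}: at the corners of this $n$-cube the $k$-th eigenvalue of $\H\aab^n$ in $[s_k-\delta_k,s_k+\delta_k]$ crosses $s_k$ with controlled sign, so a zero of the map $(\al_1,\dots,\al_n)\mapsto (\lambda_k-s_k)_{k=1}^n$ exists. This produces the sequence $\al^n$ realizing \eqref{disc:n3}. The verification of the hypotheses of the multidimensional IVT (essentially monotonicity and nonvanishing on a boundary) is where I expect the main obstacle: it requires quantitative isolation of the relevant eigenvalues from the rest of the spectrum, which in turn depends on the choice of $\delta_k$ and on the uniformity in $n$ of \eqref{disc:n12}.

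For Step 3, the $\al^n$ lie in the fixed compact box $\prod_k [\al_k^-,\al_k^+]$ (extended by the fixed values $\FF^D_{d_k}(s_k)$ for $k\in\Zm$), so a standard diagonal extraction produces a subsequence and a limit $\al=(\al_k)_{k\in\Z}$ still satisfying Hypothesis~\ref{hypo31}. The key analytic step is then to establish norm resolvent convergence
\[
\bigl(\H^n_{\al^n,\be}-\mu\Id\bigr)^{-1}\longrightarrow \bigl(\H_{\al,\be}-\mu\Id\bigr)^{-1}\quad\text{as }n\to\infty,
\]
for some $\mu$ below all operators involved, under a suitable growth condition $\be_k\to\infty$ as $|k|\to\infty$. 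Here I would split the difference of resolvents into the finite-$n$ Dirichlet block (on $(-x_{-n},x_n)$), where it is controlled by the uniform convergence $\al_k^n\to \al_k$ and Proposition~\ref{prop:HR:2} (giving Dirichlet-like decoupling in the limit), and the tails, which are governed by the form estimate \eqref{nonnegative} of Lemma~\ref{neumannchen} together with the largeness of $\be_k$: choosing the growth rate of $\be_k$ fast enough, dictated by condition \eqref{a-a} and the shrinking widths $d_k$, makes the tail part of the resolvent difference arbitrarily small in operator norm. This is the second place where I anticipate technical difficulty, because one needs the growth rate of $\be_k$ to be compatible with the prior choice of $\be^{\inf}$ from Lemma~\ref{lemma:clo}, i.e.\ the monotonicity observation in Hypothesis~\ref{hypo41} must be invoked to legitimately replace $\be$ by a faster-growing sequence without disturbing the construction of $\al^n$ from Step 2.

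Finally I would assemble the conclusion. Norm resolvent convergence preserves essential spectrum, so \eqref{ess:n} yields $\sigma_\ess(\H\aab)=\sigma_\ess(\H\aa)=S_\ess$, proving \eqref{main-ess}. In $(T_1,T_2)$, norm resolvent convergence implies Hausdorff convergence of the discrete spectrum together with the multiplicities, so \eqref{disc:n3} passes to the limit and gives $s_k\in\sigma_\disc(\H\aab)$ for every $k\in\N$, while \eqref{disc:n12} with the disjointness \eqref{delta2} rules out any further eigenvalues in $(T_1,T_2)\setminus\overline{\opset}$ and \eqref{Sprop2} prevents discrete eigenvalues inside $\overline\opset\subset S_\ess$. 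Simplicity of each eigenvalue $s_k$ follows from simplicity in the partly coupled operators together with the one-to-one correspondence enforced by the intervals $[s_k-\delta_k,s_k+\delta_k]$. This establishes \eqref{main-disc} and completes the proof.
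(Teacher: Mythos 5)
Your three-step skeleton (decoupled operator, partly coupled operators fixed by the intermediate value theorem of \cite{HKP97}, diagonal extraction plus norm resolvent convergence) is exactly the route the paper takes, and the ingredients you identify -- the corner inequalities \eqref{induction:pm} needed for Lemma~\ref{lemma-hempel}, the role of \eqref{a-a} in controlling the tail of $\sum_k|\al_k^{n_m}-\al_k|\,|u(y_k)|^2$, and the compatibility of the growth of $\be_k$ with $\be^{\inf}$ via the monotonicity remark in Hypothesis~\ref{hypo41} -- are the right ones. The one genuine divergence is how you obtain \eqref{main-ess}. The paper does \emph{not} derive the essential spectrum of the fully coupled operator from the norm resolvent convergence $\H^{n_m}_{\al^{n_m},\be}\to\H_{\al,\be}$; it has a separate, direct argument (Theorem~\ref{th:ess}) showing that $(\H\aab-\mu\Id)^{-1}-(\H\aa-\mu\Id)^{-1}$ factors as $\Gamma^*_{\al,\infty}\Gamma\aab$ with $\Gamma\aab$ compact, so that Weyl's theorem gives $\sigma_\ess(\H\aab)=\sigma_\ess(\H\aa)=S_\ess$ for \emph{every} $\al$ satisfying Hypothesis~\ref{hypo31}, independently of the limiting procedure; the norm resolvent convergence in Lemma~\ref{lemma:nrc} is then used only for the discrete spectrum. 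Your alternative -- passing the identity $\sigma_\ess(\H^n_{\al^n,\be})=S_\ess$ through the norm resolvent limit -- is workable, but be aware that ``norm resolvent convergence preserves the essential spectrum'' is not Weyl's theorem: it requires a spectral-projection argument (norm convergence of $E_{\H^{n_m}}(U)$ for intervals $U$ with endpoints in the resolvent set, plus the fact that projections close in norm have equal rank) to rule out essential spectrum collapsing into discrete spectrum or vice versa in the limit. Moreover, the hard analytic input for your norm resolvent convergence -- the tail bounds $\sum_{|k|\ge m}D_k^{-3}|u(x_k)|^2\le C_m\|f\|^2$ with $C_m\to 0$ and the uniform bound on $\sum_k D_k^3|v'(x_k+0)-v'(x_k-0)|^2$ -- is precisely the content of Lemmas~\ref{lemma:gamma} and \ref{lemma:gamma0} that drive the paper's direct compactness argument, so your reorganization does not actually save any work; it only adds the burden of justifying the stability of $\sigma_\ess$ under the limit. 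Everything else in your plan matches the paper's proof, including the final bookkeeping that uses \eqref{delta2}, \eqref{delta3}, \eqref{Sprop2} and \eqref{Sprop5} to exclude spurious discrete eigenvalues in $(T_1,T_2)$.
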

In the next section we will prove \eqref{main-ess}. The assertion \eqref{main-disc} and the multiplicity statement will be shown in Section~\ref{sec:disc}.

\section{Essential spectrum of $\H\aab$\label{sec:ess}}

In this section we prove the identity \eqref{main-ess} in Theorem~\ref{th:main:pre}. More precisely, 
we will show that for any sequence $\al$ satisfying Hypothesis~\ref{hypo31} the essential spectra of $\H\aab$ and $\H\aa$ coincide
provided $(\be_k)^{-1}$ decays sufficiently fast as $|k|\to\infty$. Then \eqref{th:deco1} in Theorem~\ref{bigthm1} implies \eqref{main-ess}.

\begin{theorem}\label{th:ess}
Let $\al=( \al_k)_{k\in\Z}$ and $\be=( \be_k)_{k\in\Z}$ be sequences satisfying Hypothesis~\ref{hypo31} and Hypothesis~\ref{hypo41}, and assume that for 
\begin{gather*}%\label{rho}
\rho_k={1\over D_k^2}\max\left\{{1\over \be_k D_k^{3}},\ {1\over\be_{k-1} D_{k-1}^{3}}\right\},
\,\,\,\text{ where }D_k=\min\{d_k,d_{k+1}\},\,k\in\Z,
\end{gather*}
one has 
\begin{gather}\label{rho-cond}
\rho_k\to 0\text{ as }|k|\to \infty.
\end{gather}
Then the essential spectrum of the self-adjoint operator $\H\aab$ is given by 
$$\sigma_{\ess}(\H\aab)=\sigma_{\ess}(\H\aa)=S_\ess.$$ 
\end{theorem}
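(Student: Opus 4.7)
The plan is to apply Weyl's theorem on stability of essential spectrum under compact resolvent perturbations. Since $\sigma_\ess(\H\aa)=S_\ess$ by Theorem~\ref{bigthm1}, it suffices to show that the resolvent difference
\begin{gather*}
(\H\aab-\mu\Id)^{-1}-(\H\aa-\mu\Id)^{-1}
\end{gather*}
is compact in $\L\interval$ for some $\mu$ in the common resolvent set, for instance $\mu<s_{\inf}-1$. The strategy is to interpolate through the partly coupled operators $\H\aab^n$ that will be introduced in Section~\ref{sec:paco}, which are obtained from $\H\aa$ by replacing Dirichlet decoupling at the finitely many points $x_k$, $|k|\le n-1$, by $\delta$-couplings of strength $\be_k$.

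Each $\H\aab^n$ differs from $\H\aa$ only by imposing $\delta$-couplings at finitely many points, so the resolvent difference $(\H\aab^n-\mu\Id)^{-1}-(\H\aa-\mu\Id)^{-1}$ has finite rank and is in particular compact. Moreover one observes that $\dom(\h\aab^n)\subset\dom(\h\aab)$: functions in $\dom(\h\aab^n)$ vanish at $x_k$ for $|k|\ge n$, so the boundary contributions $\be_k|u(x_k)|^2$ drop out automatically there, and on this common subspace the two forms coincide. Consequently $\h\aab^n$ is a monotonically non-increasing sequence of closed forms with pointwise limit $\h\aab$, and the monotone convergence theorem for quadratic forms (Kato) gives \emph{strong} resolvent convergence $\H\aab^n\to\H\aab$ as $n\to\infty$.

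The main step is upgrading this to \emph{norm} resolvent convergence using the decay assumption \eqref{rho-cond}. For $f\in\L\interval$ with $\|f\|\le 1$ and $u=(\H\aab-\mu\Id)^{-1}f$, the boundary contributions in \eqref{999} together with Hypothesis~\ref{hypo41} yield $\be_k|u(x_k)|^2\le C$ uniformly in $f$. One then constructs $u^n\in\dom(\h\aab^n)$ by subtracting local \emph{bump corrections} $w_k$ supported on the neighbouring pair $\I_k\cup\I_{k+1}$ that absorb the boundary value $u(x_k)$ for each $|k|\ge n$. Using the trace inequality $|u(x_k)|^2\lesssim D_k\|u'\|^2_{\L}+D_k^{-1}\|u\|^2_{\L}$ on the adjacent intervals together with Lemma~\ref{neumannchen} to control the non-positive contribution of the $\al_k$-terms, and combining with the uniform bound $|u(x_k)|^2\le C/\be_k$, one arrives (after careful book-keeping of the powers of $D_k$ and $\be_k$ which is exactly what the definition of $\rho_k$ encodes) at a form-error bound of the schematic shape
\begin{gather*}
\|u-u^n\|_{\h\aab}^2\;\lesssim\;\sup_{|k|\ge n}\rho_k\;\cdot\;\|f\|^2.
\end{gather*}
By \eqref{rho-cond} the right-hand side tends to $0$ uniformly in $\|f\|\le 1$, which by a standard form-theoretic argument (cf. the proof of Proposition~\ref{prop:HR:2}) delivers norm resolvent convergence $\H\aab^n\to\H\aab$.

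Since norm limits of compact operators are compact, $(\H\aab-\mu\Id)^{-1}-(\H\aa-\mu\Id)^{-1}$ is compact, and Weyl's theorem gives $\sigma_\ess(\H\aab)=\sigma_\ess(\H\aa)=S_\ess$. The principal obstacle is the quantitative trace/Sobolev bookkeeping in the norm-convergence step: one must track the joint dependence on the vanishing lengths $d_k$ (hence $D_k$) and the growing strengths $\be_k$ uniformly in both the cutoff index $n$ and the data $f$, and it is precisely this scaling that motivates the specific form of $\rho_k$ in \eqref{rho-cond}.
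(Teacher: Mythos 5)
Your overall architecture is viable and, after rearrangement, uses the same ingredients as the paper: Weyl's theorem plus compactness of the resolvent difference, with compactness obtained as a norm limit of the finite-rank resolvent differences produced by the partly coupled operators $\H\aab^n$. (The paper's Section~\ref{sec:ess} instead factorizes the resolvent difference directly as $\Gamma^*\aa\Gamma\aab$ through $l^2(\Z)$, with $\Gamma\aab$ compact and $\Gamma\aa$ bounded, and reserves the norm-resolvent-convergence argument $\H\aab^n\to\H\aab$ for Lemma~\ref{lemma:nrc}; with $\al$ held fixed, your route and that lemma amount to essentially the same computation.)

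There is, however, a genuine gap at precisely the step you single out as the main one. The uniform bound $\be_k|u(x_k)|^2\le C$ for $u=(\H\aab-\mu\Id)^{-1}f$, $\Vert f\Vert_{\L\interval}\le 1$, on which your error estimate rests, does not follow from ``the boundary contributions in \eqref{999} together with Hypothesis~\ref{hypo41}''. To isolate the single term $\be_k|u(x_k)|^2$ from $\h\aab[u,u]=(f,u)_{\L\interval}+\mu\Vert u\Vert^2_{\L\interval}$ you must bound all the remaining summands from below; by Lemma~\ref{neumannchen} the available lower bound on $\hk_{\I_j,\al_j}[\u_j,\u_j]$ is $-C d_j^{-2}\Vert\u_j\Vert^2_{\L(\I_j)}$, and since $d_j\to0$ the resulting quantity $\sum_j d_j^{-2}\Vert\u_j\Vert^2_{\L(\I_j)}$ is \emph{not} controlled by $\Vert f\Vert^2_{\L\interval}$. (For the decoupled operator this issue does not arise because each block satisfies $\hk_{\I_j,\al_j}[\vv_j,\vv_j]\le C\Vert g\restriction_{\I_j}\Vert^2_{\L(\I_j)}$ individually; for the coupled operator only the total form is controlled.) What is actually provable --- and what the paper establishes in \eqref{Gammaf1}--\eqref{long-estimate} --- is the weighted summed estimate $\sum_k\rho_kD_k^2\bigl(\be_k|u(x_k)|^2+\be_{k-1}|u(x_{k-1})|^2\bigr)\le C\Vert f\Vert^2_{\L\interval}$, where the weight $\rho_kD_k^2$ is exactly what tames the $d_k^{-2}$ from Lemma~\ref{neumannchen} (since $\rho_kD_k^2/d_k^2\le\rho_k$) while still dominating $D_k^{-3}|u(x_k)|^2$ (since $\rho_kD_k^2\be_k\ge D_k^{-3}$). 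This yields $\sum_{|k|\ge n}D_k^{-3}|u(x_k)|^2\le C_n\Vert f\Vert^2_{\L\interval}$ with $C_n\to0$ by \eqref{rho-cond}, which is the estimate your bump-correction step actually needs (it gives $\sum_{|k|\ge n}D_k^{-1}|u(x_k)|^2\to0$ uniformly in $\Vert f\Vert_{\L\interval}\le1$); the pointwise bound $|u(x_k)|^2\le C/\be_k$ is neither established by your argument nor required. A secondary point: your appeal to the monotone convergence theorem for the non-increasing forms $\h\aab^n$ presupposes that $\bigcup_n\dom(\h\aab^n)$ is a form core for $\h\aab$, which you do not verify --- though this becomes moot once the norm convergence is proved correctly.
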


\begin{remark}
Condition \eqref{rho-cond} can be easily achieved by taking $\be_k$ converging fast enough to $\infty$ as $|k|\to\infty$. 
For example, \eqref{rho-cond} holds if we choose $\be_k$ such that
$$\be_k\ge {1\over \min\{D_k^{5+\epsilon},\, D_{k+1}^2D_k^{3+\epsilon}\}}\ \text{ with  } \epsilon>0.$$
\end{remark}

\begin{proof}[Proof of  Theorem \ref{th:ess}]
We set
\begin{gather}\label{mu}
\mu= s_{\inf}-2,
\end{gather}
where $s_{\inf}$ is the quantity specified in \eqref{sinf}. 
Recall that the form $\h\aa$ is bounded from below by $s_{\inf}$ and 
the form $\h\aab$ is bounded from below by $s_{\inf}-1$.
Hence $\mu$ belongs to the resolvent set of both operators $\H\aab$ and $\H\aa$, and we have 
\begin{gather}\label{mu:dist}
1\leq  \dist(\mu,\sigma(\H\aab))\quad\text{and}\quad 1 \leq \dist(\mu,\sigma(\H\aa)).
\end{gather}
We introduce the resolvent difference
$$T\aab=(\H\aab -\mu \Id)^{-1}-(\H\aa-\mu \Id)^{-1}.$$
Our goal is to prove that  
$T\aab$ is a compact operator;
then by virtue of Weyl's theorem  (see, e.g., \cite[Theorem XIII.14]{RS78})
and Theorem~\ref{bigthm1} we immediately obtain the statement of Theorem~\ref{th:ess}.

For $f,g\in \L\interval$ we set 
$$u=(\H\aab-\mu \Id)^{-1}f\quad\text{and}\quad v=(\H\aa-\mu \Id)^{-1}g.$$
Then one has 
\begin{gather*}%\label{Trepres}
\begin{split}
(T\aab f,g)_{\L\interval}
&= \bigl((\H\aab-\mu \Id)^{-1}f,g\bigr)_{\L\interval}-\bigl(f,(\H\aa-\mu \Id)^{-1}g\bigr)_{\L\interval}\\
&= \bigl(u,(\H\aa-\mu \Id)v\bigr)_{\L\interval}-\bigl((\H\aab-\mu \Id)u,v\bigr)_{\L\interval}\\
&= ( u,\H\aa v)_{\L\interval}-(\H\aab u,v)_{\L\interval}\\
&=\suml_{k\in\Z}\left(-\intl_{x_{k-1}}^{y_k}u \overline{ v''}\,\d x -\intl_{y_{k}}^{x_k}u \overline{ v''}\,\d x +
\intl_{x_{k-1}}^{y_k}u'' \overline{v}\,\d x +\intl_{y_{k}}^{x_k}u'' \overline{v}\,\d x \right)
\end{split}
\end{gather*}
 and integration by parts leads to 
\begin{gather*}
\begin{split}
(T\aab f,g)_{\L\interval}=
&-\suml_{k\in\Z}\left( u(y_{k}-0)\overline{v'(y_{k}-0)}-u(x_{k-1}+0)\overline{v'(x_{k-1}+0)}\right)
\\
&-\suml_{k\in\Z}\left( u(x_{k}-0)\overline{v'(x_{k}-0)}-u(y_{k}+0)\overline{v'(y_{k}+0)}\right)
\\
&+\suml_{k\in\Z}\left( u'(y_{k}-0)\overline{v(y_{k}-0)}-u'(x_{k-1}+0)\overline{v(x_{k-1}+0)}\right)
\\
&+\suml_{k\in\Z}\left( u'(x_{k}-0)\overline{v(x_{k}-0)}-u'(y_{k}+0)\overline{v(y_{k}+0)}\right).
\end{split}
\end{gather*}
Since both $u$ and $v$ 
satisfy the conditions 
\begin{gather*}\label{uvcondis}
\begin{split}
 u(y_k+0)=u(y_k-0),\quad & u'(y_k+0)-u'(y_k-0)=\alpha_k u(y_k\pm 0),\quad k\in\Z,\\
 v(y_k+0)=v(y_k-0),\quad & v'(y_k+0)-v'(y_k-0)=\alpha_k v(y_k\pm 0),\quad k\in\Z,
\end{split}
\end{gather*}
$v(x_k)=0$ and $u(x_{k}-0)=u(x_k+0)$ for all $k\in\Z$, the expression for $(T\aab f,g)_{\L\interval}$ reduces to
\begin{gather}\label{res-difference0}
(T\aab f,g)_{\L\interval}=\suml_{k\in\Z}u(x_k)\overline{v'(x_k+0)-v'(x_k-0)}.
\end{gather}

We introduce the operators 
$\Gamma\aab:\L\interval\to  {l^2 }(\Z)$, $\Gamma\aa:\L\interval\to  {l^2 }(\Z)$    by
\begin{gather*} 
\begin{array}{ll}
(\Gamma\aab  f)_k = D_k^{-3/2}\big[\big((\H\aab - \mu \Id)^{-1}f\big)(x_k)\big],& k\in\Z,\\[2mm]
(\Gamma\aa g)_k = D_k^{3/2}\big[ \big((\H\aa - \mu \Id)^{-1}g\big)'(x_k+0)- \big((\H\aa  - \mu \Id)^{-1}g\big)'(x_k-0)\big],& k\in\Z,
\end{array}
\end{gather*}
on their natural domains 
\begin{gather*}
\dom(\Gamma\aab )=\left\{f\in\L\interval:\ \Gamma\aab  f\in l^2(\Z)\right\},\\ \dom(\Gamma\aa)=\left\{g\in\L\interval:\ \Gamma\aa g\in l^2(\Z)\right\}.
\end{gather*}
Below we prove in Lemma~\ref{lemma:gamma} and Lemma~\ref{lemma:gamma0} that both operators $\Gamma\aab$ and $\Gamma\aa$ are bounded and everywhere defined, 
and moreover  $\Gamma\aab$ is   compact. Hence 
\eqref{res-difference0} can be rewritten as  
\begin{gather*}
(T\aab f,g)_{\L\interval}=(\Gamma\aab f,\Gamma\aa g)_{l^2(\Z)}=(\Gamma^*\aa\Gamma\aab f,g)_{\L\interval},\quad f,g\in\L\interval,
\end{gather*}
and 
we conclude that the resolvent difference
$T\aab=\Gamma^*\aa\Gamma\aab$
is compact. 

Thus to finish the proof of Theorem~\ref{th:ess} we have to prove the lemmata below.

\begin{lemma}\label{lemma:gamma}
The operator 
$\Gamma\aab$ is bounded, defined on the whole space $\L\interval$, and compact.
\end{lemma}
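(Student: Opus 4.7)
The plan is to establish all three assertions—everywhere defined, bounded, and compact—through a single quantitative estimate on $|u(x_k)|^2$, where $u := (\H\aab - \mu\Id)^{-1} f$, exploiting the $\be_k$-term in the form $\h\aab$ to absorb the weight $D_k^{-3}$.

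First I would collect the energy bounds. From the identity $\h\aab[u,u] - \mu\|u\|^2 = (f,u)$ and the lower bound $\h\aab \geq s_{\inf}-1$ of Lemma~\ref{lemma:clo}, together with the choice $\mu = s_{\inf}-2$ in \eqref{mu}, one obtains $\|u\|_{\L\interval} \leq \|f\|_{\L\interval}$ and $\h\aab[u,u] \leq C\|f\|^2$. The proof of Lemma~\ref{lemma:clo} in fact establishes the stronger per-interval bound $\hk_{\I_k, \al_k, \be_{k-1}, \be_k}[\u_k, \u_k] \geq (s_{\inf}-1)\|\u_k\|^2_{\L(\I_k)}$; setting $\widehat\hk_k := \hk_{\I_k, \al_k, \be_{k-1}, \be_k}[\u_k, \u_k] - (s_{\inf}-1)\|\u_k\|^2_{\L(\I_k)} \geq 0$, it then follows that $\sum_k \widehat\hk_k \leq C\|f\|^2$.

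Next I would derive the pointwise estimate for $|u(x_k)|^2$. Lemma~\ref{neumannchen} gives $\hk_{\I_k,\al_k}[\u_k,\u_k] \geq -(C/d_k^2)\|\u_k\|^2_{\L(\I_k)}$, and dropping the remaining non-negative terms $\|\u_k'\|^2$ and $(\be_{k-1}/2)|u(x_{k-1})|^2$ in $\hk_{\I_k, \al_k, \be_{k-1}, \be_k}$ yields
\begin{gather*}
\frac{\be_k}{2}|u(x_k)|^2 \leq \hk_{\I_k, \al_k, \be_{k-1}, \be_k}[\u_k, \u_k] + \frac{C}{d_k^2}\|\u_k\|^2_{\L(\I_k)}.
\end{gather*}
Applying the same argument to $\I_{k+1}$, where $x_k$ is the \emph{left} endpoint, gives the analogous inequality in $\u_{k+1}$. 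Adding the two, dividing by $\be_k D_k^3$, using $(\be_k D_k^3)^{-1} \leq D_k^2\rho_k$ and $D_k^2/d_k^2,\, D_k^2/d_{k+1}^2 \leq 1$, produces
\begin{gather*}
D_k^{-3}|u(x_k)|^2 \leq D_k^2\rho_k\bigl(\hk_{\I_k, \al_k, \be_{k-1}, \be_k}[\u_k,\u_k] + \hk_{\I_{k+1}, \al_{k+1}, \be_k, \be_{k+1}}[\u_{k+1},\u_{k+1}]\bigr) + C\rho_k\bigl(\|\u_k\|^2_{\L(\I_k)} + \|\u_{k+1}\|^2_{\L(\I_{k+1})}\bigr).
\end{gather*}

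Summing over $k$ and re-indexing, the right-hand side takes the form $\sum_j \eta_j \hk_{\I_j, \al_j, \be_{j-1}, \be_j}[\u_j, \u_j] + 2C\sup_k\rho_k\|u\|^2$ with $\eta_j := D_j^2\rho_j + D_{j-1}^2\rho_{j-1}$; by \eqref{rho-cond} the sequence $(\eta_j)$ is bounded and tends to zero. Decomposing $\hk_{\I_j, \al_j, \be_{j-1}, \be_j}[\u_j, \u_j] = \widehat\hk_j + (s_{\inf}-1)\|\u_j\|^2_{\L(\I_j)}$ and using $\sum_j \widehat\hk_j \leq C\|f\|^2$ together with $\|u\|\leq \|f\|$ then gives $\sum_k D_k^{-3}|u(x_k)|^2 \leq C\|f\|^2$, proving that $\Gamma\aab$ is everywhere defined on $\L\interval$ and bounded. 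For compactness, I would approximate $\Gamma\aab$ by the finite-rank truncations $\Gamma\aab^N$ that retain only coordinates $|k|\leq N$; the same estimate restricted to $|k|>N$ replaces $\sup_k\rho_k$ and $\sup_j\eta_j$ by their tails, both of which vanish by \eqref{rho-cond}, so $\|\Gamma\aab - \Gamma\aab^N\| \to 0$. The delicate point is that the individual $\hk_{\I_j, \al_j, \be_{j-1}, \be_j}[\u_j,\u_j]$ need not be non-negative, so the per-interval shift from Lemma~\ref{lemma:clo} is essential for extracting a bound by $\|f\|^2$; the weight $\rho_k$ in \eqref{rho-cond} is precisely what is needed to simultaneously absorb the prefactor $(\be_k D_k^3)^{-1}$ and the Neumann-type constant $C/d_k^2$ coming from Lemma~\ref{neumannchen}.
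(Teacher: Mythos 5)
Your proposal is correct and follows essentially the same route as the paper's proof: the same energy identity $\h\aab[u,u]-\mu\|u\|^2=(f,u)$, the same use of Lemma~\ref{neumannchen} to absorb the negative part of $\hk_{\I_k,\al_k}$, the same absorption of the weight $D_k^{-3}$ into $\rho_k D_k^2\,\be_k$, and the same finite-rank truncation with vanishing tails of $\rho_k$ for compactness. The only difference is organizational (you center the pairing at the points $x_k$ and re-index, whereas the paper pairs the two endpoint contributions within each interval $\I_k$), and the per-interval semiboundedness you single out as the delicate point is indeed exactly what the paper uses implicitly when pulling $\max_k(\rho_kD_k^2)$ out of the sum.
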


\begin{proof}
First we prove that $\Gamma\aab f$ is well-defined for any  $f\in\L\interval$.
As before we consider $u=(\H\aab - \mu \Id)^{-1}f\in\dom(\H\aab)\subset\dom(\h\aab)$.
Then one has for each $n\in\N$
\begin{gather}\label{Gammaf1}
 \begin{split}
 &\suml_{ k:\,|k|\leq n}|(\Gamma\aab f)_k|^2=\suml_{ k:\,|k|\leq n}{|u(x_k)|^2 \over D_k^{3}}
\\
&=\suml_{ k=-n}^{n+1} { D_k^{-3}|u(x_k)|^2+D_{k-1}^{-3}|u(x_{k-1})|^2\over 2} -\,
\frac{D_{n+1}^{-3}|u(x_{n+1})|^2}{2}\, -\, \frac{D_{-n-1}^{-3}|u(x_{-n-1})|^2}{2}\\
&\leq
\suml_{ k=-n}^{n+1}\rho_k D_k^2\left({ \be_k |u(x_k)|^2+\be_{k-1} |u(x_{k-1})|^2\over 2}\right)\\
&\leq
\suml_{ k=-n}^{n+1}\rho_k D_k^2\left( \hk_{\I_k, \al_k}[\u_k,\u_k]+{C\over d_k^2}\|\u_k\|^2_{\L(\I_k)}+{ \be_k |u(x_k)|^2+\be_{k-1} | u(x_{k-1})|^2\over 2}\right),
 \end{split}
\end{gather}
where we have used Lemma~\ref{neumannchen} and the corresponding constant $C>0$ from there in the last estimate.
Taking into account the definition of the forms $\hk_{\I_k, \al_k,\be_{k-1},\be_k}$ and $\h\aab$ we continue the above estimates by
\begin{gather}\label{long-estimate}
\begin{split}
&=
\suml_{ k=-n}^{n+1}\rho_kD_k^2\left( \hk_{\I_k, \al_k,\be_{k-1},\be_k}[\u_k,\u_k]
-\mu\|\u_k\|^2_{\L(\I_k)} + \left({C\over d_k^2}+\mu\right)\|\u_k\|^2_{\L(\I_k)}\right)\\
& \leq \max_{k\in\Z}\left(\rho_kD_k^2\right)\left(\h\aab[u,u]-\mu\|u\|^2_{\L\interval}\right)+
\max_{k\in\Z}\left(C\rho_k{D_k^2\over d_k^2}+|\mu|\rho_k D^2_k\right)\|u\|^2_{\L\interval}
\end{split}
\end{gather}
and using   $\max_{k\in\Z}\rho_k<\infty$ (this follows from \eqref{rho-cond}) and 
\begin{gather*}
D_k\le d_k<\ell_+-\ell_- 
\end{gather*}
we conclude
\begin{gather}\label{long-estimate3}
\suml_{ k:\,|k|\leq n}|(\Gamma\aab f)_k|^2\leq C_1 \left(\h\aab[u,u]-\mu\|u\|^2_{\L\interval}\right)+C_2\|u\|^2_{\L\interval},
\end{gather}
where $$
C_1=\max_{k\in\Z}\left(\rho_kD_k^2\right)\quad\text{and}\quad C_2=\max_{k\in\Z}\left(C\rho_k{D_k^2\over d_k^2}+|\mu|\rho_k D^2_k\right).
$$
Since $(\H\aab - \mu \Id)u=f$ we have 
\begin{gather}\label{einmal}
\h\aab[u,u]-\mu\|u\|^2_{\L\interval}=(f,u)_{\L\interval}
\end{gather}
and \eqref{mu:dist} implies
\begin{gather}\label{zweimal}
\|u\|_{\L\interval}=\|(\H\aab - \mu \Id)^{-1}f\|_{\L\interval}\leq \frac{1}{\dist(\mu,\sigma(\H\aab))}\|f\|_{\L\interval}\leq\|f\|_{\L\interval},
\end{gather}
so that 
%\begin{gather}\label{u<f}
%\|u\|_{\L\interval}\leq\|f\|_{\L\interval},\quad\h\aab[u,u]-\mu\|u\|^2_{\L(a,b)}=(f,u)_{\L\interval}, 
%\end{gather}
\eqref{long-estimate3} leads to
\begin{gather*}
\suml_{ k:\,|k|\leq n}|(\Gamma\aab f)_k|^2\leq (C_1+C_2)\|f\|^2_{\L\interval},
\end{gather*}
 {and hence $\|\Gamma\aab f \|_{l^2(\Z)}\leq \sqrt{C_1+C_2}\|f\|_{\L\interval}$.}
Therefore the operator $\Gamma\aab$ is bounded and well-defined on the whole space $\L\interval$.

In order to prove the compactness of $\Gamma\aab$ we consider for $n\in\N$ the finite rank operators
$$\Gamma^n\aab:\L\interval\to  {l^2}(\Z),\quad (\Gamma^n\aab f)_k=
\begin{cases}
(\Gamma\aab f)_k,& |k|\leq n,\\
0,&|k|\geq n+1.
\end{cases}
$$
Then for $f\in\L\interval$ one has 
\begin{gather}\label{gaga1}
\|\Gamma^n\aab f - \Gamma\aab  f\|^2_{l^2(\Z) }=
\suml_{|k|\geq n+1}  D_k^{-3}|u(x_k)|^2,
\end{gather}
where as before $u=(\H\aab-\mu \Id)^{-1}f$.
Repeating verbatim the arguments in  \eqref{Gammaf1} and \eqref{long-estimate} we obtain
\begin{equation}\label{gaga1+}
\begin{split}
\suml_{|k|\geq n+1}  D_k^{-3}|u(x_k)|^2&\leq 
\max_{|k|\ge n+1}\left(\rho_k D_k^2\right) \left(\h\aab[u,u]-\mu\|u\|^2_{\L\interval}\right)\\
&\qquad\qquad +
\max_{|k|\ge n+1}\left(C\rho_k{D_k^2\over d_k^2}+|\mu|\rho_k D_k^2\right) \|u\|^2_{\L\interval}.
\end{split}
\end{equation}
From  \eqref{rho-cond} it is clear that $\max_{|k|\ge n+1}\rho_k\to 0$ as $n\to\infty$ and hence
\begin{gather}\label{nach000}
 \max_{|k|\ge n+1}\left(\rho_k D_k^2\right)\to 0\quad\text{and}\quad \max_{|k|\ge n+1}\left(C\rho_k{D_k^2\over d_k^2}+|\mu|\rho_k D_k^2\right)\to 0
\end{gather}
as $n\to\infty$. Using  \eqref{einmal} and \eqref{zweimal} we conclude for \eqref{gaga1} from \eqref{gaga1+}--\eqref{nach000}
\begin{gather}\label{gaga1++}
 \|\Gamma^n\aab f - \Gamma\aab  f\|^2_{l^2(\Z) }=
\suml_{|k|\geq n+1}  D_k^{-3}|u(x_k)|^2\leq C_n\|f\|^2_{\L\interval}
\end{gather}
with $C_n\rightarrow 0$ as $n\to\infty$.
Thus $\Gamma^n\aab\to\Gamma\aab$ in the operator norm as $n\to \infty$. Since $\Gamma^n\aab$ are finite rank operators, 
the operator $\Gamma\aab$ is  compact.  
\end{proof}

\begin{lemma}\label{lemma:gamma0}
The operator 
$\Gamma_{\al,\infty}$ is bounded and defined on the whole space $\L\interval$.
\end{lemma}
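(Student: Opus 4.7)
The plan is to mimic the proof of Lemma~\ref{lemma:gamma}: for arbitrary $g\in\L\interval$ and $v=(\H\aa-\mu\Id)^{-1}g$, derive a pointwise estimate for the derivative jump $v'(x_k+0)-v'(x_k-0)$, and sum over $k\in\Z$. Recall that $v\in\dom(\H\aa)$ satisfies the Dirichlet conditions $v(x_k)=0$, on each half-interval $(y_k,x_k)$ and $(x_k,y_{k+1})$ the function $v$ is smooth and satisfies $-v''-\mu v=g$, and $\|v\|_{\L\interval}\le\|g\|_{\L\interval}$ by \eqref{mu:dist}.

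The heart of the argument is to extract $v_k'(x_k-0)$ by integrating against a \emph{smooth} cutoff $\phi^-_k$ on $[y_k,x_k]$ satisfying $\phi^-_k(y_k)=0$, $\phi^-_k(x_k)=1$, and $(\phi^-_k)'(y_k)=0$; the Hermite cubic $\phi^-_k(x)=3t^2-2t^3$ with $t=(x-y_k)/L$ and $L=d_k/2$ is a convenient choice. Two integrations by parts, combined with the vanishing of $v_k(x_k)$ and of $(\phi^-_k)'(y_k)$, kill all unwanted boundary terms and produce the representation
\begin{gather*}
v_k'(x_k-0)=-\int_{y_k}^{x_k}(g_k+\mu v_k)\phi^-_k\,\d x-\int_{y_k}^{x_k}v_k\,(\phi^-_k)''\,\d x.
\end{gather*}
The scalings $\|\phi^-_k\|_{\L(y_k,x_k)}\le C\sqrt{L}$ and $\|(\phi^-_k)''\|_{\L(y_k,x_k)}\le CL^{-3/2}$ together with Cauchy--Schwarz yield the bound $|v_k'(x_k-0)|^2\le C\bigl[d_k\bigl(\|g_k\|^2_{\L(\I_k)}+\|v_k\|^2_{\L(\I_k)}\bigr)+d_k^{-3}\|v_k\|^2_{\L(\I_k)}\bigr]$, and since $D_k\le d_k\le \ell_+-\ell_-$, the prefactor $D_k^3$ absorbs the $d_k^{-3}$ singularity and leaves only bounded coefficients. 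A symmetric estimate holds for $v_{k+1}'(x_k+0)$ on $[x_k,y_{k+1}]$, so that summing produces
\begin{gather*}
\|\Gamma\aa g\|^2_{l^2(\Z)}\le C\bigl(\|g\|^2_{\L\interval}+\|v\|^2_{\L\interval}\bigr)\le 2C\|g\|^2_{\L\interval},
\end{gather*}
which shows that $\Gamma\aa$ is bounded and everywhere defined.

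The delicate point is the choice of test function. A naive linear $\phi(x)=(x-y_k)/L$ produces an additional boundary term $v_k(y_k)/L$ whose $l^2$-summability, via the trace inequality $|v_k(y_k)|^2\le (d_k/2)\|v_k'\|^2_{\L(\I_k)}$, would require a global estimate $\sum_k\|v_k'\|^2\le C\|g\|^2$. Such an estimate is not directly available: $\h\aa[v,v]=\sum_k(\|v_k'\|^2+\al_k|v_k(y_k)|^2)$ with $\al_k<0$ of order $-1/d_k$, and the matching upper bound on $|\al_k||v_k(y_k)|^2$ supplied by Lemma~\ref{neumannchen} yields only a circular inequality. The cubic test function with $(\phi^-_k)'(y_k)=0$ bypasses this obstacle entirely by converting the troublesome pointwise value $v_k(y_k)$ into a weighted $\L$-norm of $v_k$, which is already controlled by $\|g\|$.
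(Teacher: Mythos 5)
Your proof is correct, but it takes a genuinely different route from the paper's. The paper integrates $-\vv_k''=(\H\aa v)\restriction_{\I_k}$ once by parts against the \emph{linear} weight $\w_k$ spanning the whole interval $\I_k$; the boundary term at $y_k$ then combines with the $\delta$-jump condition to produce the full form expression $\hk_{\I_k,\al_k}[\vv_k,\w_k]$, which is controlled by the Cauchy--Schwarz inequality for the non-negative shifted form $\hk_{\I_k,\al_k}[\cdot,\cdot]+Cd_k^{-2}(\cdot,\cdot)_{\L(\I_k)}$ supplied by Lemma~\ref{neumannchen}, together with the explicit computation $\hk_{\I_k,\al_k}[\w_k,\w_k]\le 1/d_k$ (which uses $\al_k<0$). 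Your argument instead localizes to the half-interval $(y_k,x_k)$, uses a cubic cutoff with flat derivative at $y_k$, and integrates by parts \emph{twice}; the conditions $\phi^-_k(y_k)=(\phi^-_k)'(y_k)=0$ and $\vv_k(x_k)=0$ kill every boundary term except the one you want, so the $\delta$-interaction at $y_k$, the form $\hk_{\I_k,\al_k}$, and Lemma~\ref{neumannchen} never enter. What you pay is the factor $\|(\phi^-_k)''\|_{\L}\sim d_k^{-3/2}$ in place of $\|\w_k'\|_{\L}\sim d_k^{-1/2}$, but the weight $D_k^3\le d_k^3$ in the definition of $\Gamma\aa$ absorbs it exactly, and you land on $\|\Gamma\aa g\|^2_{l^2(\Z)}\le C(\|g\|^2+\|v\|^2)\le 2C\|g\|^2$, which is the required bound. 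Your diagnosis of the naive linear half-interval cutoff is also accurate: it leaves a term $\vv_k(y_k)/L$ whose summation would require a global bound on $\sum_k\|\vv_k'\|^2$ that is not directly available because of the negative, $O(d_k^{-1})$ interaction strengths. Your approach buys independence from the sign and size of $\al_k$ and is self-contained; the paper's approach buys uniformity with the rest of the text, since the same shifted-form Cauchy--Schwarz device is reused in the proof of Lemma~\ref{lemma:n:disc:0} and in Lemma~\ref{lemma:nrc}.
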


\begin{proof}
For $g\in\L\interval$ we consider $v=(\H\aa-\mu\Id)^{-1}g\in\dom(\H\aa)\subset\dom(\h\aa)$. 
For $k\in\Z$ we denote by $\w_k$ the linear function defined on $\I_k$ which has the value $0$ at left endpoint $x_{k-1}$ and the value $1$ at right endpoint $x_k$, 
i.e.
$$\w_k (x)={x-x_{k-1}\over x_k-x_{k-1}},\qquad x\in\I_k.$$
Integrating by parts, taking into account that $v$  {satisfies $v(y_k+0)=v(y_k-0)$,} and using the notation $\vv_k=v\restriction_{\I_k}$ we get
\begin{gather*}
\begin{split}
\bigl((\H\aa v)\upharpoonright_{\I_k},\w_k \bigr)_{\L(\I_k)}&=-\intl_{x_{k-1}}^{y_k}\vv_k'' \w_k \,\d x -\intl_{y_{k}}^{x_k}\vv_k'' \w_k \,\d x \\
&=
\intl_{x_{k-1}}^{x_k}\vv'_k \w'_k\, \d x + \al_k \vv_k(y_k)\w_k(y_k) - \vv'_k(x_k-0),
\end{split}
\end{gather*}
which leads to 
\begin{gather} \label{v-repr}
\begin{split}
\vv_k'(x_k-0)&=\intl_{x_{k-1}}^{x_k}\vv'_k \w'_k\, \d x + \al_k \vv_k(y_k)\w_k(y_k) - \bigl((\H\aa v)\upharpoonright_{\I_k},\w_k \bigr)_{\L(\I_k)}\\
&=\left[ \hk_{\I_k,\al_k}[\vv_k,\w_k]+
{C\over d_k^2}(\vv_k,\w_k)_{\L(\I_k)}\right]-\bigl((\H\aa v)\upharpoonright_{\I_k},\w_k \bigr)_{\L(\I_k)}- 
{C\over d_k^2}(\vv_k,\w_k)_{\L(\I_k)},
\end{split}
\end{gather}
where
$C$ is the positive constant in Lemma~\ref{neumannchen}. It is easy to compute 
\begin{gather*}%\label{wk-prop}
\|\w_k\|^2_{\L(\I_k)}= {d_k\over 3}\quad\text{and}\quad
 \hk_{\I_k,\al_k}[\w_k,\w_k]={1\over d_k}+{\alpha_k\over 4}\leq { 1\over d_k},
\end{gather*} 
where \eqref{al<0} was used in the last estimate.
Due to Lemma~\ref{neumannchen} we then obtain the following Cauchy-Schwarz inequality and corresponding estimate
\begin{gather}\label{CS}
 \begin{split}
  &\left| \hk_{\I_k,\al_k}[\vv_k,\w_k]+{C\over d_k^2}(\vv_k,\w_k)_{\L(\I_k)}\right|^2\\
  &\qquad \leq
\left( \hk_{\I_k,\al_k}[\vv_k,\vv_k]+{C\over d_k^2}\|\vv_k\|^2_{\L(\I_k)}\right)
\left( \hk_{\I_k,\al_k}[\w_k,\w_k]+{C\over d_k^2}\|\w_k\|^2_{\L(\I_k)}\right)\\
&\qquad \leq 
\frac{C'}{d_k}
\left( \hk_{\I_k,\al_k}[\vv_k,\vv_k]+{C\over d_k^2}\|\vv_k\|^2_{\L(\I_k)}\right)
 \end{split}
\end{gather}
with $C'>0$.
Combining \eqref{v-repr} and \eqref{CS} and taking into account that $d_k<\ell_+-\ell_-$ we arrive at the estimate  
\begin{gather}\label{vk-0}
|\vv_k'(x_k-0)|^2\leq 
C''\left( d_k^{-1}  \hk_{\I_k,\al_k}[\vv_k,\vv_k]+ 
 \|(\H\aa v)\upharpoonright_{\I_k}\|^2_{\L(\I_k)}+d_k^{-3}\|\vv_k\|^2_{\L(\I_k)}\right).
\end{gather}
With $\w_k$ replaced by $1-\w_k$ we obtain with the same arguments the estimate
\begin{gather*}
|\vv_k'(x_{k-1}+0)|^2\leq 
C'''\left( d_k^{-1}  \hk_{\I_k,\al_k}[\vv_k,\vv_k]+ 
 \|(\H\aa v)\upharpoonright_{\I_k}\|^2_{\L(\I_k)}+d_k^{-3}\|\vv_k\|^2_{\L(\I_k)}\right).
\end{gather*}
Now we obtain
\begin{gather*}%\label{gamma0est}
 \begin{split}
 \|\Gamma\aa g\|^2_{l^2(\Z)}
 & = \suml_{k\in\Z}  D_k^3 \vert v'(x_k-0) -  v'(x_k+0)\vert^2\\
 &\leq 2 \suml_{k\in\Z}  D_k^3|\vv_k'(x_k-0)|^2 + 2 \suml_{k\in\Z}  D_{k-1}^3|\vv_k'(x_{k-1}+0)|^2
 \end{split}
\end{gather*}
and using $D_{k-1}\le d_k<\ell_+-\ell_-$ and $D_k\le d_k<\ell_+-\ell_-$, $k\in\Z$, we conclude 
\begin{gather*}
\|\Gamma\aa g\|^2_{l^2(\Z)} \leq
\widetilde C\left(\h\aa[v,v]+\|\H\aa v\|^2_{\L\interval}+\|v\|^2_{\L\interval}\right)
\end{gather*}
with some $\widetilde C>0$.
Since $(\H\aa-\mu\Id)v=g$,
$$
\h\aa[v,v]=(\H\aa v,v)_{\L\interval}=(g+\mu v,v)_{\L\interval},$$
and 
$\|v\|_{\L\interval}\leq\|g\|_{\L\interval}$ by \eqref{mu:dist} (see also \eqref{zweimal}), 
we   obtain finally
\begin{gather}\label{gamma0est:final}
 \|\Gamma\aa g\|^2_{l^2(\Z)}
  = \suml_{k\in\Z}  D_k^3 \vert v'(x_k+0) -  v'(x_k-0)\vert^2
 \leq  C \|g\|^2_{\L\interval}.
 \end{gather}
This shows that $\Gamma\aa$ is bounded and  well-defined on the whole space $\L\interval$.
\end{proof}
\end{proof}

\section{Partly coupled operators  and their spectra \label{sec:paco}}

In this section we take another step towards the proof of the main result in this paper. Here our objective is to study a partly coupled operator 
$\H\aab^n$ for $n\in\N$ that is obtained from the decoupled operator $\H_{ \al,\infty}$ by introducing finitely many $\delta$-couplings of strengths $\beta_k$ 
at the points $x_k$, $k=-n+1,\dots,n-1$. After some technical preparations in Section~\ref{sec61} it will be shown in Theorem~\ref{th:n:essdisc} that 
one can pick sequences $\alpha^n$ and $\beta$ such that $\sigma_{\ess}(\H_{\al^n,\be}^n)=S_\ess$
and $\sigma_{\disc}(\H_{\al^n,\be}^n)\cap (T_1,T_2)=
S_\disc$, that is, Theorem~\ref{th:main:pre} holds for the partly coupled operator $\H\aab^n$.

In the following we use the notation
\begin{gather*}
\mathcal{K}=\bigl\{k\in \Zm:\ s_k\in [T_1,T_2]\bigr\}
\end{gather*}
and for $n\in\N$ we set 
\begin{gather*}
\mathcal{K}^n=\bigl\{k\in \{-n+1,\dots,0\}:\ s_k\in [T_1,T_2]\bigr\}.
\end{gather*}
Recall that we have already fixed a sequence $(\delta_k)_{k\in\N}$ which satisfies \eqref{delta1}--\eqref{a-a}.
In addition, we now introduce a sequence $(\delta_k)_{k\in\Zm}$ such that
\begin{gather}\label{delta3}
B_{\delta_k}(s_k)\subset\begin{cases} \opset, & k\in\mathcal K, \\ \R\setminus[T_1,T_2], & k\in\Z\setminus(\N\cup\mathcal{K}),\end{cases}
%B_{\delta_k}(s_k)\subset \opset,\quad k\in\mathcal K,\qquad\text{and}\qquad  B_{\delta_k}(s_k)\subset \R\setminus[T_1,T_2],\quad k\in\Z\setminus(\N\cup\mathcal{K}),
\end{gather}
which is possible since $s_k\in\opset$ for $k\in\mathcal K$ (see \eqref{Sprop2+}) and $s_k\in \R\setminus[T_1,T_2]$ for $k\in\Z\setminus(\N\cup\mathcal{K})$. To avoid technical difficulties below 
we sometimes discuss the situation $\mathcal{K}=\Z\setminus\N$, in which case $\mathcal{K}^n=\{-n+1,\dots,0\}$; in other words we treat the situation $s_k\in (T_1,T_2)$
for all $k\in\Z$, which appears if $S_{\ess}\subset [T_1,T_2]$.

\subsection{The operator $\Hk\aab^n$ and its spectrum}\label{sec61}

Let again $\al=(\alpha_k)_{k\in \Z}$  be a sequence satisfying Hypothesis~\ref{hypo31}, 
while $\be=(\be_k)_{k\in\Z}$ is a sequence of real numbers.  
For $n\in\N $ we consider the densely defined, closed, symmetric sesquilinear form
\begin{gather*}
 \begin{split}
  \hk^n_{\alpha,\beta}[u,v]&=\sum_{k=-n+1}^{n}\hk_{\I_k,\alpha_k}[\u_k,\vv_k]+\sum_{k=-n+1}^{n-1}\beta_k u(x_k)\overline{v(x_k)},\\
  \dom (\hk^n_{\alpha,\beta})&=\W^{1,2}_0(x_{-n},x_{n}),
 \end{split}
\end{gather*}
in $\L(x_{-n},x_{n})$ and the corresponding self-adjoint operator $\Hk\aab^n$ in $\L(x_{-n},x_{n})$ with $\delta$-interactions 
of strengths $\beta_k$ and $\alpha_k$ on 
$$\mathcal Z_k\coloneqq \bigl\{x_k:-n+1\leq k\leq n-1 \bigr\} \cup \bigl\{y_k:-n+1\leq k\leq n \bigr\},$$
which is given by
\begin{equation*}
\begin{split}
(\Hk\aab^n u)\restriction_{\I_k\setminus\{y_k\}}&= 
(- \u_k\restriction_{\I_k\setminus\{y_k\}})'',\qquad -n+1\le k\le  n,\\
 \dom(\Hk\aab^n)&=\left\{ 
 \begin{matrix}
  u\in \W^{2,2}((x_{-n},x_{n})\setminus \mathcal Z_k)\\
  u(x_{-n})=u(x_{n})=0
  %\u_k\in \W^{2,2}(\I_k\setminus\{y_k\})
 \end{matrix} :
 \begin{matrix} 
% u\in \W^{2,2}\left((x_{-n},x_{n})\setminus(\bigcup_{k=-n+1}^{n-1}\{x_k\}\bigcup_{k=-n+1}^{n}\{y_k\})\right)\\
u(y_k +0)=u(y_k -0),\\%\, -n+1\leq k \leq n,\\
u(x_k +0)=u(x_k -0),\\ %-n+1\leq k \leq n-1\\
\,u'(y_k+0)-u'(y_k -0)= \al_k u( y_k\pm 0 ),\\
u'(x_k+0)-u'(x_k -0)=\be_k u( x_k\pm 0 ),\\
\text{for all}\,\,x_k,y_k\in\mathcal Z_k 
\end{matrix}
\right\}.
\end{split}
\end{equation*}
It is clear that 
$\Hk\aab^n$ is independent of $\al_k$ with $k\not\in \{-n+1,\dots,n\} $ and $\be_k$ with $k\not\in\{-n+1,\dots,n-1\}$. Furthermore, 
the spectrum of $\Hk\aab^n$ is purely discrete.

 The main result of this subsection is Theorem~\ref{th:nspec} for which some preparatory
statements are needed.
 The first lemma shows that 
the eigenvalues of $\Hk\aab^n$ are close to the eigenvalues of
the self-adjoint operator 
\begin{gather}\label{finitedecoup}
\bigoplus_{ k=-n+1}^{n}\Hk^D_{\I_k,\al_k},
\end{gather}
provided $\beta_k$ are sufficiently large. Moreover, the eigenvalues satisfy the additional useful inequalities \eqref{induction:pm}, where we consider 
sequences  $\al=((\underline\al^{n,k})_{l})_{l\in\Z}$, $\al=((\overline\al^{n,k})_{l})_{l\in\Z}$, $k=1,\dots,n$, that satisfy Hypothesis~\ref{hypo31} and are of the particular form \eqref{special:sequence}
with the numbers $\al_k^\pm$ defined by \eqref{alpm}.
The inequalities \eqref{induction:pm} will be needed later, when applying the intermediate value theorem from \cite{HKP97} in the proof of Lemma~\ref{lemma:exact:n}.

\begin{lemma}\label{lemma:n:disc:0} 
There exists a sequence $\be'=(\be'_k)_{k\in\Z}$ of real numbers such that the following holds: 
for any sequence $\al=(\alpha_k)_{k\in \Z}$ 
satisfying Hypothesis~\ref{hypo31},  
for any sequence $\be=(\be_k)_{k\in\Z}$ satisfying  $\be_k'\le\beta_k$, $k\in\Z$, and for any
$n\in\N$,
one has
\begin{gather} 
\label{induction}
\sigma(\Hk\aab^n)\cap (T_1,T_2)=
\left\{s^n_{\al,\be;\, k} : k\in\mathcal{K}^n\cup \{1,\dots,n\}\right\},
\end{gather}
where $s^n_{\al,\be;\, k}\in B_{\delta_k}(s_k)$ are simple eigenvalues of $\Hk\aab^n$. Moreover,
for sequences $\underline\al^{n,k}$ and $\overline\al^{n,k}$, $k=1,\dots,n$, 
that satisfy Hypothesis~\ref{hypo31} and are of the particular form
 \begin{gather}\label{special:sequence}
 \begin{split}
  \underline\al^{n,k}&=\bigl(\dots,\alpha_{-1},\alpha_0,\alpha_1^+,\dots,\alpha_{k-1}^+,\alpha_k^-,\alpha_{k+1}^+,\dots,\alpha_n^+,\alpha_{n+1},\dots\bigr),\\
  \overline\al^{n,k}&=\bigl(\dots,\alpha_{-1},\alpha_0,\alpha_1^-,\dots,\alpha_{k-1}^-,\alpha_k^+,\alpha_{k+1}^-,\dots,\alpha_n^-,\alpha_{n+1},\dots\bigr)
  \end{split}
  \end{gather}
one has 
\begin{gather}
\label{induction:pm}
s^n_{\underline\al^{n,k},\be;\, k}< s_k-{1\over 4}\delta_k,\quad
s_k+{1\over 4}\delta_k< s^n_{\overline\al^{n,k},\be;\, k},\quad k=1,\dots,n.
\end{gather}
\end{lemma}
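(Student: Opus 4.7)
My approach is to compare $\Hk\aab^n$ with the Dirichlet-decoupled operator
$$\Hk^{n,\mathrm{dec}}_\al\coloneqq\bigoplus_{k=-n+1}^{n}\Hk^D_{\I_k,\al_k}\quad\text{on}\quad\L(x_{-n},x_n),$$
which is the formal limit of $\Hk\aab^n$ as $\beta_k\to\infty$ for $|k|\le n-1$, and to convert this limit into a quantitative statement uniform in $n$ and $\al$. The first task is to describe $\sigma(\Hk^{n,\mathrm{dec}}_\al)\cap(T_1,T_2)$ explicitly. By the direct-sum structure (Appendix~\ref{A3}), Proposition~\ref{prop:HD:2} and the bound $\lambda_2(\Hk^D_{\I_k,\al_k})=(2\pi/d_k)^2>T_2$ coming from \eqref{d:assump1}, only first eigenvalues can appear in $(T_1,T_2)$; combining this with \eqref{first-lambda1}, \eqref{first-lambda2}, \eqref{delta1} and \eqref{delta3} yields
$$\sigma(\Hk^{n,\mathrm{dec}}_\al)\cap(T_1,T_2)=\left\{\lambda_1(\Hk^D_{\I_k,\al_k}):k\in\mathcal{K}^n\cup\{1,\dots,n\}\right\}\subset\bigcup_{k}\overline{B_{\delta_k/2}(s_k)},$$
with all eigenvalues simple and sitting in pairwise disjoint balls, while the remaining part of $\sigma(\Hk^{n,\mathrm{dec}}_\al)$ stays outside a uniform neighbourhood of each $\overline{B_{\delta_k/2}(s_k)}$.

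Next I would establish $\Hk\aab^n\to\Hk^{n,\mathrm{dec}}_\al$ in the norm resolvent sense as $\min_{|k|\le n-1}\be_k\to\infty$. Since $\bigoplus_k\W^{1,2}_0(\I_k)\subset\dom(\hk^n_{\al,\be})$ and the two forms agree on this subspace, one has the ordering $\hk^n_{\al,\be}\le\hk^{n,\mathrm{dec}}_\al$ with $\hk^n_{\al,\be}\nearrow\hk^{n,\mathrm{dec}}_\al$ as each $\be_k\to\infty$; repeating the argument of Proposition~\ref{prop:HR:2} (via \cite[Theorem~3.1]{S78} and compactness of the resolvents) gives the norm resolvent convergence, and hence the monotone eigenvalue convergence $\lambda_j(\Hk\aab^n)\nearrow\lambda_j(\Hk^{n,\mathrm{dec}}_\al)$, for fixed $n$ and $\al$. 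The main obstacle is to make this convergence quantitative, with a rate uniform in $n$ and in $\al\in\prod_k[\al_k^-,\al_k^+]$. My plan is to localise: for each fixed $k$, the two-interval Dirichlet problem on $\I_k\cup\I_{k+1}$ with a single $\delta$-coupling of strength $\be_k$ at $x_k$ and $\delta$-interactions $\al_k,\al_{k+1}$ at $y_k,y_{k+1}$ converges, as $\be_k\to\infty$, to $\Hk^D_{\I_k,\al_k}\oplus\Hk^D_{\I_{k+1},\al_{k+1}}$ at a rate that depends only on $d_k,d_{k+1}$ and the compact ranges $[\al_k^-,\al_k^+],[\al_{k+1}^-,\al_{k+1}^+]$ fixed by Hypothesis~\ref{hypo31}. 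Sandwiching $\Hk\aab^n$ between such localised operators by inserting and removing Dirichlet conditions at the neighbouring points $x_{k-1},x_{k+1}$ via min-max, the shift of any eigenvalue near $s_l$ caused by the single coupling at $x_k$ is controlled by this local quantity; choosing $\be'_k$ large enough at every $k$ so that the aggregate shift at each site $l$ does not exceed $\delta_l/4$ yields a sequence $\be'$ that depends only on the intervals $\I_k$ and the $\delta_k$, and works for all $n$ and all admissible $\al$.

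Granted this uniform $\delta_l/4$-bound, \eqref{induction} follows from a bijective correspondence: each decoupled eigenvalue in $B_{\delta_l/2}(s_l)$ yields exactly one simple $s^n_{\al,\be;\,l}\in B_{\delta_l}(s_l)$, and the gap between $B_{\delta_l/2}(s_l)$ and the rest of $\sigma(\Hk^{n,\mathrm{dec}}_\al)$ combined with the ordering $\Hk\aab^n\le\Hk^{n,\mathrm{dec}}_\al$ and min-max precludes any further eigenvalues of $\Hk\aab^n$ in $(T_1,T_2)$. For \eqref{induction:pm} I would use the explicit values $\lambda_1(\Hk^D_{\I_k,\al_k^\pm})=s_k\pm\delta_k/2$ coming from \eqref{alpm} and Proposition~\ref{prop:HD:2}: for $\al=\underline\al^{n,k}$ the form inequality alone gives $s^n_{\underline\al^{n,k},\be;\,k}\le s_k-\delta_k/2<s_k-\delta_k/4$, whereas for $\al=\overline\al^{n,k}$ the quantitative lower bound gives $s^n_{\overline\al^{n,k},\be;\,k}\ge(s_k+\delta_k/2)-\delta_k/4=s_k+\delta_k/4$.
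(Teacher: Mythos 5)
Your overall skeleton is the right one and matches the paper in spirit: compare $\Hk\aab^n$ with the Dirichlet-decoupled operator, use the form ordering $\hk^n_{\al,\be}\le\hk^{n,\mathrm{dec}}_{\al}$ for upper bounds on eigenvalues, and a quantitative closeness estimate for the rest, with \eqref{delta1}--\eqref{delta3} guaranteeing that the relevant eigenvalues sit in pairwise disjoint balls. Your observation that the first inequality in \eqref{induction:pm} then comes for free from monotonicity (once the localisation in disjoint balls is known, so that the $j$-th eigenvalues correspond) is a genuine small simplification over the paper, which derives both inequalities from the quantitative estimate.

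The gap is in the step you yourself flag as the main obstacle, and the proposed fix does not close it. The natural quantitative estimate here — bound $|u(x_k)|^2$ by $\be_k^{-1}$ times the form plus $L^2$-terms via Lemma~\ref{neumannchen}, and $|v'(x_k\pm 0)|^2$ by local form data — produces a resolvent-difference bound whose constant involves $\min_{|j|\le n}d_j$ (the negative parts $-Cd_j^{-2}\|\u_j\|^2$ of \emph{all} the single-interval forms must be compensated, not just those of $\I_k$ and $\I_{k+1}$); since $d_j\to 0$, this constant blows up with $n$, so the rate is \emph{not} a function of $d_k,d_{k+1}$ and the ranges $[\al_k^\pm]$ alone. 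Your proposed localisation — sandwiching by inserting and removing Dirichlet conditions at $x_{k-1},x_{k+1}$ — does not repair this: min--max bracketing bounds the eigenvalues of the bracketing operators, but it does not bound the \emph{difference} of eigenvalues between $\Hk\aab^n$ and the operator with one more decoupling, and there is no superposition principle that lets you sum "the shift caused by the single coupling at $x_k$" over $k$ without an argument (a telescoping chain of single decouplings would work, but then each step's constant is again non-local). The paper circumvents all of this by induction on $n$: at the step $N\to N+1$ only the two \emph{outermost} couplings at $x_{\pm N}$ are new, the resolvent difference to $\Hk^D_{\I_{-N},\al_{-N}}\oplus\Hk\aab^N\oplus\Hk^D_{\I_{N+1},\al_{N+1}}$ is bounded by $\widetilde C_N\max\{\be_{-N}^{-1/2},\be_N^{-1/2}\}$ with $\widetilde C_N$ allowed to depend on $d_{-N},\dots,d_{N+1}$ (but uniform in $\al$), and $\be'_{\pm N}$ is chosen \emph{after} $\widetilde C_N$ and the slack accumulated at earlier steps are known. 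Since $\Hk\aab^n$ depends only on $\be_k$ with $|k|\le n-1$, this still produces a single sequence $\be'$ valid for all $n$. To make your one-shot argument rigorous you would need either a genuinely local (e.g.\ decay-of-eigenfunctions) estimate that you have not supplied, or to reorganise it into exactly this outside-in induction.
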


\begin{proof}
To avoid further technical difficulties we assume that $\mathcal{K}=\Zm$, which leads to $\mathcal{K}^n=\{-n+1,\dots,0\}$. Hence 
\begin{gather}\label{wlog}
s_k\in\opset\subset(T_1,T_2),\qquad k\in\Zm,
\end{gather}
and,
in particular, it follows from \eqref{wlog} that $S_\ess\subset [T_1,T_2]$. The general case needs only slight modifications.

We prove \eqref{induction} and \eqref{induction:pm} below by induction. 
For convenience, we restrict ourselves to the sequences $\beta=(\beta_k)_{k\in\Z}$ such that
\begin{gather}\label{be:restriction}
\beta_k\geq \be^{\inf}_k\quad\text{and}\quad \beta_k>0 
\end{gather}
(in another words, $\beta$ satisfies Hypothesis~\ref{hypo41}).
This assumption implies, in particular,
 that for each $n\in\N$ the form $\hk^n_{\alpha,\beta}$ is bounded from below by $s_{\inf}-1$. In fact, for 
$u\in \dom (\hk^n_{\alpha,\beta})=\W^{1,2}_0(x_{-n},x_{n})$ and its extension $\widetilde u$ by zero on all of $\interval$ one has 
$\widetilde u\in \dom (\hk_{\alpha,\beta})$ and Lemma~\ref{lemma:clo} implies
\begin{gather*}
 \hk^n_{\alpha,\beta}[u,u]=\hk_{\alpha,\beta}[\widetilde u,\widetilde u]\geq (s_{\inf}-1)\Vert \widetilde u\Vert_{\L\interval}=(s_{\inf}-1)\Vert u\Vert_{\L(x_{-n},x_{n})}.
\end{gather*}
Therefore, $\mu=s_{\inf}-2$ defined in \eqref{mu}
belongs to the resolvent set of the self-adjoint operator $\Hk\aab^n$ and we have
\begin{gather}\label{wiederesti}
1\leq \dist(\mu,\sigma(\Hk\aab^n)),\qquad n\in\N.
\end{gather}
It is also clear from \eqref{first-lambda1}--\eqref{first-lambda2} and \eqref{sinf} that $\mu$ belongs to the resolvent set 
of the operator in \eqref{finitedecoup} and the same estimate holds.

\smallskip   
 
\noindent\textbf{Base case ($n=1$).}  
We consider the bounded 
operator 
$$T_{\be_0}=(\Hk\aab^1 - \mu \Id )^{-1} - \bigl((\Hk^D_{\I_0,\al_0}\oplus \Hk^D_{\I_1,\al_1}) - \mu \Id \bigr)^{-1}$$ 
in $\L(\I_0\cup\I_1)$. 
For $f,g\in\L(\I_0\cup\I_1)$ we set 
$$u=(\Hk\aab^1- \mu \Id )^{-1}f\quad\text{and}\quad v=\bigl((\Hk^D_{\I_0,\al_0}\oplus \Hk^D_{\I_1,\al_1}) - \mu \Id \bigr)^{-1}g.$$ 
Following the arguments that led to \eqref{res-difference0} one verifies in the present situation that 
\begin{gather}
\label{Tb0}
(T_{\be_0}f,g)_{\L(\I_0\cup\I_1)}=u(x_0)\overline{v'(x_0+0)-v'(x_0-0)}.
\end{gather}
Taking into account that $\be_0>0$ (see~\eqref{be:restriction}), using Lemma~\ref{neumannchen} with the constant $C$ from there, and the definition of the form $\hk\aab^1$ we obtain  
\begin{gather*}
\begin{split}
|u(x_0)|^2&\leq  \frac{1}{\beta_0}\bigg[
\hk_{\I_0,\alpha_0}[\u_0,\u_0]+ {C\over d_0^2}\|\u_0\|^2_{\L(\I_0)}+
\hk_{\I_1,\alpha_1}[\u_1,\u_1]+ {C\over d_1^2}\|\u_1\|^2_{\L(\I_1)} 
  +\,\beta_0 |u(x_0)|^2\bigg]\\
  & =\frac{1}{\beta_0}\bigg[
\hk\aab^1[u,u]+ {C\over d_0^2}\|\u_0\|^2_{\L(\I_0)}+ {C\over d_1^2}\|\u_1\|^2_{\L(\I_1)} \bigg]\\
 &=\frac{1}{\beta_0}\bigg[
 (f+\mu u,u)_{\L(\I_0\cup\I_1)}+ {C\over d_0^2}\|\u_0\|^2_{\L(\I_0)}+ {C\over d_1^2}\|\u_1\|^2_{\L(\I_1)} \bigg].
\end{split}
\end{gather*}
Since $\|\u_k\|_{\L(\I_k)}\leq \|u\|_{\L(\I_0\cup \I_1)}\leq\|f\|_{\L(\I_0\cup \I_1)}$ by \eqref{wiederesti} for $k=0,1$, we conclude 
\begin{gather}\label{u-est0}
|u(x_0)|^2 \leq \frac{C_0}{\beta_0} \|f\|^2_{\L(\I_0\cup \I_1)},
\end{gather}
where the constant $C_0$ depends on $d_0,d_1,\mu$, and $C$ from Lemma~\ref{neumannchen}, but is independent of  $\al_0,\al_{1},\be_0$.
As in the proof of Lemma~\ref{lemma:gamma0} (cf.~\eqref{vk-0}) one obtains the estimate 
\begin{gather*}
|v'(x_0-0)|^2\leq 
C''\left( 
d_0^{-1}  \hk_{\mathcal{I}_0,\alpha_0}[\mathbf{v}_0,\mathbf{v}_0]+ 
 \|\mathbf{H}^D_{\mathcal{I}_0,\alpha_0} \mathbf{v}_0\|^2_{\L(\mathcal{I}_0)}+d_0^{-3}\|\mathbf{v}_0\|^2_{\L(\mathcal{I}_0)}\right)
\end{gather*}
with the constant $C''$ being independent of  $\al_0$.
Using this estimate, and also taking into account that 
$\mathbf{H}^D_{\mathcal I_0,\alpha_0} \mathbf{v}_0 = \mathbf{g}_0 + \mu  \mathbf{v}_0$,
$\mathfrak{h}_{\mathcal{I}_0,\alpha_0}[\mathbf{v}_0,\mathbf{v}_0]=(\mathbf{g}_0+\mu \mathbf{v}_0,\mathbf{v}_0)_{\L(\mathcal{I}_0)},$
and $$\|\mathbf{v}_0\|_{\L(\mathcal{I}_0)}\leq \|\mathbf{g}_0\|_{\L(\mathcal{I}_0)}\leq \|g\|_{\L(\mathcal{I}_0\cup \mathcal{I}_1)},$$ 
we conclude
\begin{gather}
\label{v-est0-}
|v'(x_0-0)|^2\leq C_0^- \|g\|_{\L(\mathcal{I}_0\cup \mathcal{I}_1)}^2,
\end{gather}
and similarly 
\begin{gather}
\label{v-est0+}
|v'(x_0+0)|^2\leq C_0^+ \|g\|_{\L(\mathcal{I}_0\cup \mathcal{I}_1)}^2,
\end{gather}
where the constants $C_0^-$ and $C_0^+$ depend, respectively, on $d_0$ and $d_1$, but are independent of 
$\alpha_0$, $\alpha_1$ and $\beta_0$.
Using \eqref{u-est0}--\eqref{v-est0+} we conclude from \eqref{Tb0} that
\begin{gather}\label{norm}
\|(\Hk\aab^1 - \mu \Id )^{-1} - ((\Hk^D_{\I_0,\al_0}\oplus \Hk^D_{\I_1,\al_1}) - \mu \Id )^{-1}\|\leq \widetilde C_0\beta_0^{-1/2},
\end{gather}
where again the constant $\widetilde C_0>0$ is independent of  $\al_0,\al_{1},\be_0$.
It follows from \eqref{norm} that for each  $j\in\N$ 
\begin{gather}\label{spec-conv}
\sup_{\al_0,\al_1}|\lambda_j(\Hk\aab^1)-\lambda_j(\Hk^D_{\I_0,\al_0}\oplus \Hk^D_{\I_1,\al_1}) |\to 0\text{ as }\beta_0\to\infty.
\end{gather}
Recall, that 
\begin{gather*}
\begin{array}{ll}
\lambda_1(\Hk^D_{\I_0,\al_0})= s_0,& \lambda_1(\Hk^D_{\I_1,\al_1})\in \overline{B_{\delta_1/2}(s_1)},\\ 
\ds \lambda_2(\Hk^D_{\I_0,\al_0})=\left(2\pi\over  d_0\right)^2,& \ds\lambda_2(\Hk^D_{\I_1,\al_1})=\left(2\pi\over  d_1\right)^2;
\end{array}
\end{gather*}
cf. \eqref{first-lambda1}--\eqref{first-lambda2}. 
In particular (cf.~Proposition~\ref{prop:HD:2} and \eqref{alpm}), we get
\begin{gather}\label{recall+}
\lambda_1(\Hk^D_{\I_1,\al_1^-})= s_1-{1\over 2}\delta_1,
\quad
\lambda_1(\Hk^D_{\I_1,\al_1^+})= s_1+{1\over 2}\delta_1.
\end{gather}
Combining  \eqref{spec-conv}--\eqref{recall+},
and taking into account that $ { B_{\delta_0}(s_0)}\cap {B_{\delta_1}(s_1)}=\emptyset$
(since 
$B_{\delta_1}(s_1)\subset (T_1,T_2)\setminus \overline{\opset}$ and
$B_{\delta_0}(s_0)\subset \opset$; cf. \eqref{delta1}, \eqref{delta3}, \eqref{wlog})
we conclude that 
there exists a positive $\beta'_0\ge \be_0^{\inf}$ such that for any  $\beta_0\in [\beta'_0,\infty)$ one has 
\begin{itemize}

\item 
$\lambda_j(\Hk\aab^1) >T_2$ for $j\geq 3$,\smallskip

\item
if $s_0<s_1$, then
$\lambda_1(\Hk\aab^1)\in B_{\delta_0}(s_0)$ and $\lambda_2(\Hk\aab^1)\in B_{\delta_1}(s_1),$
moreover 
$$
\lambda_2(\Hk^1_{\underline\al^{1,1},\beta})<s_1-{1\over 4}\delta_1,
\quad 
s_1+{1\over 4}\delta_1<\lambda_2(\Hk^1_{\overline\al^{1,1},\beta}).
$$ 

\item if $s_1<s_0$, then
$\lambda_1(\Hk\aab^1)\in B_{\delta_1}(s_1)$ and $\lambda_2(\Hk\aab^1)\in B_{\delta_0}(s_0)$,
moreover 
$$
\lambda_1(\Hk^1_{\underline\al^{1,1},\beta})<s_1-{1\over 4}\delta_1,
\quad 
s_1+{1\over 4}\delta_1<\lambda_1(\Hk^1_{\overline\al^{1,1},\beta})
$$ 

\end{itemize}
(recall from \eqref{special:sequence} that $\left((\underline\al^{1,1})_{l}\right)_{l\in\Z}$ and $\left((\overline\al^{1,1})_{l}\right)_{l\in\Z}$ 
are sequences satisfying Hypothesis~\ref{hypo31} with the property
$
(\underline\al^{1,1})_1=\al_1^-,\ 
(\overline \al^{1,1})_1=\al_1^+
$).
Evidently, the above properties yield \eqref{induction}, \eqref{induction:pm} for $n=1$ (recall, that the operator $\Hk\aab^1$ does not 
dependent on $\be_k$ with $k\not= 0$).
\medskip

\noindent\textbf{Induction step ($N\to N+1$)}. Assume that \eqref{induction} and \eqref{induction:pm} hold for some fixed $N\in\N$, that is, 
there exist $\be_k'$, $k=-N+1,\dots,N-1$, such that for $\beta_k'\leq\beta_k$ the spectrum of $\Hk\aab^N$ in $(T_1,T_2)$ 
consists of $2N$ simple eigenvalues which are contained in $B_{\delta_k}(s_k)$, and, moreover, 
the inequalities \eqref{induction:pm} hold with $n=N$.  It is no restriction to assume that $\be_k'$, $k=-N+1,\dots,N-1$, are positive and satisfy  
$\beta_k'\ge\be_k^{\inf}$. Recall that $\Hk\aab^N$ does not dependent on 
$\be_k$ with $|k|> N-1$.

Now let the sequence $\be=(\be_k)_{k\in\Z}$ (which of course satisfies Hypothesis~\ref{hypo41}) be chosen such that
$\beta_k'\leq\beta_k$ holds for $k=-N+1,\dots,N-1$. 
We denote
$\widetilde\I_{N+1}\coloneqq (x_{-N-1},x_{N+1}).$  
For $f,g\in\L(\widetilde\I_{N+1})$ we set 
$$
u=(\Hk\aab^{N+1} - \mu \Id )^{-1}f\quad\text{and}\quad
v=\left(\bigl(\Hk^D_{\I_{-N },\al_{-N }}\oplus\Hk\aab^{N}\oplus \Hk^D_{\I_{N+1},\al_{N+1}}\bigr)- \mu \Id \right)^{-1}g.
$$
Following the arguments that led to \eqref{Tb0} we get the similar equality   
\begin{equation}\label{TbN}
\begin{split}
&\left((\Hk\aab^{N+1} - \mu \Id )^{-1}f - 
\left(\bigl(\Hk^D_{\I_{-N },\al_{-N }}\oplus\Hk\aab^{N}\oplus \Hk^D_{\I_{N+1},\al_{N+1}}\bigr)- \mu \Id \right)^{-1}f,g\right)_{\ds\L(\widetilde\I_{N+1})}\\
&\qquad\qquad=
u(x_N)\overline{v'(x_N+0)-v'(x_N-0)} +
u(x_{-N})\overline{v'(x_{-N}+0)-v'(x_{-N}-0)}.
\end{split}
\end{equation}
Let $C$ be the constant from Lemma~\ref{neumannchen}, and let
$\widehat d_N=\min\left\{d_{-N},d_{-N+1},\dots,d_{N+1}\right\}$. Taking into account that $\be_k>0$ (cf.~\eqref{be:geq:0}), we get
\begin{gather*} 
\begin{split}
|u(x_N)|^2&\leq  \frac{1}{\beta_N}\bigg[\be_N|u(x_N)|^2+
\suml_{k=-N}^{N+1}\left(\hk_{\I_k,\alpha_k}[\u_k,\u_k]+ 
{C\over   d_{k}^2}\|\u_k\|^2_{\L(\I_k)}\right)\bigg]\\
 & \le\frac{1}{\beta_N}\bigg[
\hk\aab^{N+1}[u,u]+   
{C\over  (\widehat d_{N})^2}\|u\|^2_{\L(\widetilde\I_{N+1})} \bigg]\\
 &=\frac{1}{\beta_N}\bigg[
 (f+\mu u,u)_{\L(\widetilde\I_{N+1})}+ {C\over (\widehat d_{N})^2}\|u\|^2_{\L(\widetilde\I_{N+1})} \bigg].
\end{split}
\end{gather*}
Using \eqref{wiederesti} we then arrive at the estimate
\begin{gather*}
|u(x_N)|^2\leq \frac{C_{N}}{\beta_N} \|f\|^2_{\L(\widetilde\I_{N+1})},
\end{gather*}
and, similarly,
\begin{gather*}
|u(x_{-N})|^2\leq \frac{C_{-N}}{\beta_{-N}} \|f\|^2_{\L(\widetilde\I_{N+1})},
\end{gather*}
with the constants $C_{-N},\, C_{N}$ being independent of  
$\al$ and $\be$ (however, they depend on $\widehat d_N$, and   $C$ from Lemma~\ref{neumannchen}).
Similarly to \eqref{v-est0-}--\eqref{v-est0+}, we get the estimates
\begin{gather*}
|v'(x_{-N}- 0)|^2\leq C^-_{-N}\|g\|^2_{\L(\widetilde\I_{N+1})},\\
|v'(x_N+ 0)|^2\leq C^+_{N}\|g\|^2_{\L(\widetilde\I_{N+1})},
\end{gather*}
with the constants $C^-_{-N},\, C^+_{N}$, which depend on $d_{-N}$ and $d_{N+1}$, respectively, but are independent of  $\al$ and $\be$.
Further,
denote $\widetilde v=v\restriction_{(x_{-N},x_N)}$.
As in the proof of Lemma~\ref{lemma:gamma0} 
(cf.~\eqref{vk-0}) one obtains the estimate 
\begin{gather*}
|v'(x_N-0)|^2\leq 
C''\left( 
d_N^{-1}  \hk_{\mathcal{I}_N,\alpha_N}[\mathbf{v}_N,\mathbf{v}_N]+ 
 \|({\mathbf{H}}^{N}\aab \widetilde v)\restriction_{\I_N}\|^2_{\L(\mathcal{I}_N)}+d_N^{-3}\|\mathbf{v}_N\|^2_{\L(\mathcal{I}_N)}\right)
\end{gather*}
with the constant $C''$ being independent of  $\al$ and $\be$.
Using Lemma~\ref{neumannchen} and taking into account that $\be_k>0$ we can extend the above estimate as follows,
\begin{multline*}
|v'(x_N-0)|^2\leq 
C''\bigg[ 
d_N^{-1}  
\bigg(\hk^{N+1}\aab[v,v]
+{C\over (\widehat d_N)^2}\|v\|^2_{\L(\widetilde\I_{N+1})}\bigg) 
\\
+ \| {\mathbf{H}}^{N+1}\aab v \|^2_{\L(\widetilde\I_{N+1})}+d_N^{-3}\|v\|^2_{\L(\widetilde\I_{N+1})}\bigg],
\end{multline*}
where again $C$ is the constant from Lemma~\ref{neumannchen}.
Using 
\begin{gather*}
{\mathbf{H}}^{N+1}\aab v = g + \mu v,\quad
\hk^{N+1}\aab[v,v]=(g+\mu v,v)_{\L(\widetilde\I_{N+1})},\quad
\|v\|_{\L(\widetilde\I_{N+1})}\leq 
\|g\|_{\L(\widetilde\I_{N+1})},
\end{gather*}
we conclude   that
\begin{gather*}
|v'(x_N-0)|^2\leq C_N^- \|g\|_{\L(x_{-N-1},x_{N+1})}^2
\end{gather*}
and, similarly, 
\begin{gather}
\label{v-estN-4}
|v'(x_{-N}+0)|^2\leq C_{-N}^+ \|g\|_{\L(x_{-N-1},x_{N+1})}^2,
\end{gather}
where the constants $C_N^-$ and $C_{-N}^+$ depend on $d_k$, $k=-N,\dots,N+1$, but are independent of 
the sequences $\alpha$ and $\beta$.
Combining \eqref{TbN}--\eqref{v-estN-4} we arrive at the estimate
\begin{multline}\label{norm:N}
\left\|(\Hk\aab^{N+1} - \mu \Id )^{-1} - 
\left(\bigl(\Hk^D_{\I_{-N },\al_{-N }}\oplus\Hk\aab^{N}\oplus \Hk^D_{\I_{N+1},\al_{N+1}}\bigr)- \mu \Id \right)^{-1}\right\|\\\leq
\widetilde C_N\max\bigl\{\beta^{-1/2}_{-N },\beta^{-1/2}_{N }\bigr\},
\end{multline}
where  $\widetilde C_N$ is independent of the sequences $\al$ and $\be$.
Consequently,  for each  $  j\in\N$
\begin{gather}\label{spec-conv1}
\sup _{\al_{-N },\dots,\al_{N+1}}\bigl|\lambda_j(\Hk\aab^{N+1})-\lambda_j\bigl(\Hk^D_{\I_{-N },\al_{-N }}\oplus\Hk\aab^{N}\oplus \Hk^D_{\I_{N+1},\al_{N+1}}\bigr) \bigr|
\to 0\text{ as }\beta_{-N },\beta_{N }\to\infty.
\end{gather}

By construction the set 
$$\sigma\bigl(\Hk^D_{\I_{-N },\al_{-N }}\oplus
\Hk\aab^N\oplus 
\Hk^D_{\I_{N+1},\al_{N+1}}\bigr)\cap (T_1,T_2)$$
consists of 
$2N+2$ simple eigenvalues (we denote them by $\gamma_{\al,\be;k}$, $k=-N ,\dots,N+1$) such that
\begin{gather}\label{gamma}
\begin{array}{l}
\gamma_{\al,\be;-N }= \lambda_1(\Hk^D_{\I_{-N },\al_{-N }}),\quad 
\gamma_{\al,\be;N+1}= \lambda_1(\Hk^D_{\I_{N+1},\al_{N+1}})\\[2mm] 
\gamma_{\al,\be;k}\in B_{\delta_k}(s_k),\,\,\,k=-N+1,\dots,N,
\end{array}
\end{gather}
where $\gamma_{\al,\be;k}$, $k=-N+1,\dots,N$, are the $2N$ simple eigenvalues of the operator $\Hk\aab^N$ inside $(T_1,T_2)$.
Moreover, we then have 
$$\lambda_1(\Hk^D_{\I_{-N },\al_{-N }})=s_{-N}\quad\text{and} \quad\lambda_1(\Hk^D_{\I_{N+1},\al_{N+1}})\in \overline{B_{\delta_{N+1}/2}(s_{N+1})}$$ (cf.~\eqref{first-lambda1}--\eqref{first-lambda2}).
By induction hypothesis, for $n=N$ one has 
\begin{gather}\label{gamma:pm:1}
\gamma_{\underline\al^{n,k},\be;k}< s_k-{1\over 4}\delta_k,\quad
s_k+{1\over 4}\delta_k< \gamma_{\overline\al^{n,k},\be;k},\quad  k=1,\dots,N. 
\end{gather}
Since the eigenvalues $\gamma_{\al,\be;k}$, $k=1,\dots,N$, of $\Hk\aab^N$ are independent of $\al_l$ with $l>N$
it is clear that
\begin{gather}\label{gamma:pm:2}
\text{\eqref{gamma:pm:1} holds also with $n=N+1$.} 
\end{gather}
Moreover, the property $\gamma_{\al,\be;N+1}= \lambda_1(\Hk^D_{\I_{N+1},\al_{N+1}})$ in \eqref{gamma} shows that
\begin{gather}\label{gamma:pm:3}
\gamma_{\underline\al^{N+1,N+1},\be;N+1}= s_{N+1}-{1\over 2}\delta_{N+1},\quad
\gamma_{\overline\al^{N+1,N+1},\be;{N+1}}=s_{N+1}+{1\over 2}\delta_{N+1}.
\end{gather}
Hence, using \eqref{spec-conv1}  we conclude from \eqref{gamma}, \eqref{gamma:pm:2}, \eqref{gamma:pm:3} that 
there exist positive $\be_{-N }'\ge \beta_{-N}^{\inf}$ and $\be_{N }'\ge \beta_{N}^{\inf}$  
such that for $\beta_{-N }\in[\be_{-N }',\infty)$ and $\beta_{N }\in [\be_{N }',\infty)$
the operator
$\Hk\aab^{N+1}$ also has precisely  
$2N+2$ simple eigenvalues $\widetilde\gamma_{\al,\be;k}$, $k=-N ,\dots,N+1$, in the interval $(T_1,T_2)$ 
that
satisfy $\widetilde\gamma_{\al,\be;k}\in B_{\delta_k}( s_k)$ as $k=-N ,\dots,N+1$, and moreover 
\begin{gather*}
\widetilde\gamma_{\underline\al^{n,k},\be;k}< s_k-{1\over 4}\delta_k,\quad
s_k+{1\over 4}\delta_k< \widetilde\gamma_{\overline\al^{n,k},\be;k},\quad  k=1,\dots,N+1,\ n=N+1.
\end{gather*}
Consequently, \eqref{induction} and \eqref{induction:pm} hold for $n=N+1$ and $(\be_k)_{k\in\Z}$ satisfying $\be_k\in [\be_k',\infty)$ as $k=-N,\dots,N$.
Note that $\Hk\aab^{N+1}$ is independent of 
$\be_k$ with $|k|> N$.
This completes the induction step and the proof of
Lemma~\ref{lemma:n:disc:0}.
\end{proof}

Recall that the  eigenvalues $s^n_{\al,\be;\, k}$, $k=1,\dots,n$, of the operator 
$\Hk\aab^n$  are independent of $\al_k$
with $k\not\in\{-n+1,\dots,n\}$ and that the entries $\al_k$ with $k=-n+1,\dots, 0$ are 
fixed; cf. Hypothesis~\ref{hypo31}. Therefore, for a fixed sequence $\beta$ the eigenvalues  $s^n_{\al,\be;\, k}$, $k=1,\dots,n$, can be regarded as functions
of  $\al_1, \al_2,  \dots,\al_{n }$. Bearing this in mind, 
for the following considerations we shall denote the eigenvalues $s^n_{\al,\be;\, k}$, $k=1,\dots,n$, by
$$s^\be_k[ \al_1, \al_2,  \dots,\al_{n }];$$ 
of course we assume here that the sequence $(\be_k)_{k\in\Z}$ satisfies $\be_k'\leq\be_k<\infty$. In particular, the property \eqref{induction:pm} now reads as follows:
\begin{gather}
\label{cond:mp12}
\begin{array}{l}
s^\be_k[\al_1^+,\dots,\al_{k-1}^+,\al_k^-,\al_{k+1}^+,\dots,\al_{n }^+]\leq {s_k-{1\over 4}\delta_k},\quad k=1,\dots,n,\\[2mm]
s^\be_k[\al_1^-,\dots,\al_{k-1}^-,\al_k^+,\al_{k+1}^-,\dots,\al_{n }^-]\geq s_k+{1\over 4}\delta_k,\quad k=1,\dots,n.
\end{array}
\end{gather} 
It is easy to see that the function 
\begin{gather}\label{f}
 f:  (  \al_1,  \al_2,\dots, \al_{n })\mapsto \begin{pmatrix} s^\be_1[ \al_1, \al_2,\dots,  \al_{n } ] \\ \vdots \\
                                                                                 s^\be_n[ \al_1, \al_2,\dots,  \al_{n } ]
                                                                                \end{pmatrix}
\end{gather}
is continuous and each coordinate function $f_k(\cdot)=s^\be_k[\cdot]$ is monotonically increasing in each of its arguments.
Indeed, using the same arguments as in the proof of \eqref{norm:N}, we get the following estimate for two sequences $(\al_k)_{k\in\Z}$ and $(\widetilde\al_k)_{k\in\Z}$:
\begin{gather} \label{norm:al}
\left\|(\Hk\aab^n - \mu \Id )^{-1} - (\Hk^n_{\widetilde\al,\beta} - \mu \Id )^{-1}\right\|
\leq
\widetilde{C}\max_{k=1,\dots,n}|\al_k-\widetilde\al_k|,
\end{gather}
where the constant $\widetilde{C}$ is independent of $\al_k$ and $\widetilde\al_k$ (but it depends on $d_k$, $k=-N+1,\dots,N$). Taking into account that $s_k[\al_1,\dots,\al_n]\in B_{\delta_k}(s_k)$, $k=1,\dots,n$, are simple eigenvalues, we conclude from \eqref{norm:al} the continuity of the function $f$. The monotonicity of $f_k$ in each of its arguments follows from the min-max principle (see, e.g., \cite[Section~4.5]{De95}).

The next lemma is an important ingredient for Theorem~\ref{th:nspec}.

\begin{lemma}\label{lemma:exact:n}
Let $\beta=(\be_k)_{k\in\Z}$ be such that $\be'_k\le\be_k$, $k\in\Z$, where $(\be'_k)_{k\in\Z}$ is a sequence as in Lemma~\ref{lemma:n:disc:0}.
Then for $n\in\N$ the entries $\alpha_1,\dots,\alpha_n$ of the sequence $\al=(\alpha_k)_{k\in \Z}$ 
(satisfying Hypothesis~\ref{hypo31}) can be chosen such that 
\begin{gather*}
s^\be_k[ \al_1, \al_2,  \dots,\al_{n }]=s_k,\qquad k=1,\dots,n.
\end{gather*}
\end{lemma}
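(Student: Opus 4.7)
The plan is to apply the multi-dimensional intermediate value theorem from \cite{HKP97} to the continuous map
$$f:(\al_1,\dots,\al_n)\mapsto \bigl(s^\be_1[\al_1,\dots,\al_n],\dots,s^\be_n[\al_1,\dots,\al_n]\bigr)$$
on the closed box $Q=\prod_{k=1}^{n}[\al_k^-,\al_k^+]\subset\R^n$. Note that $Q$ is non-degenerate since $\al_k^-<\al_k^+$ follows from the strict monotonicity of $\FF^D_{d_k}$ on $(-\infty,(2\pi/d_k)^2)$ together with \eqref{alpm}, and any $(\al_1,\dots,\al_n)\in Q$ is admissible in the sense of Hypothesis~\ref{hypo31}. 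The goal is to prove that the target vector $(s_1,\dots,s_n)$ lies in $f(Q)$, equivalently that $g\coloneqq f-(s_1,\dots,s_n)$ has a zero in $Q$.

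The decisive step is to upgrade the pointwise \emph{vertex} inequalities in \eqref{cond:mp12} to the face-wise sign conditions required by the multi-dimensional IVT. Here the key input is that each coordinate function $f_k$ is monotonically increasing in \emph{every} one of its arguments (as noted just before the statement). Consequently, on the ``lower face'' $\{\al_k=\al_k^-\}\cap Q$ one has
$$s^\be_k[\al_1,\dots,\al_{k-1},\al_k^-,\al_{k+1},\dots,\al_n]\leq s^\be_k[\al_1^+,\dots,\al_{k-1}^+,\al_k^-,\al_{k+1}^+,\dots,\al_n^+]\leq s_k-\tfrac{1}{4}\delta_k,$$
so $g_k\leq -\tfrac{1}{4}\delta_k<0$ there; symmetrically, the second inequality in \eqref{cond:mp12} gives $g_k\geq \tfrac{1}{4}\delta_k>0$ on the opposite face $\{\al_k=\al_k^+\}\cap Q$. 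These are exactly Miranda's hypotheses, and the cited IVT from \cite{HKP97} immediately produces $(\al_1,\dots,\al_n)\in Q$ with $g(\al_1,\dots,\al_n)=0$, i.e.\ $s^\be_k[\al_1,\dots,\al_n]=s_k$ for all $k=1,\dots,n$.

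The main conceptual obstacle is really the preparatory Lemma~\ref{lemma:n:disc:0}, which is what allows the argument above to be so short: it is there that one must set up the inductive construction of $\be'_k$ ensuring both the existence of the simple eigenvalues $s^\be_k[\al]$ inside the small balls $B_{\delta_k}(s_k)$ and the vertex inequalities \eqref{induction:pm}=\eqref{cond:mp12}. Granted that lemma and the continuity/monotonicity of $f$, the present statement is a purely topological consequence and no further spectral analysis is required.
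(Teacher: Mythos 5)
Your proposal is correct and follows essentially the same route as the paper: both apply the multi-dimensional intermediate value theorem of \cite{HKP97} (Lemma~\ref{lemma-hempel}) to the continuous, coordinatewise monotone map $f$ on the box $\prod_{k=1}^{n}[\al_k^-,\al_k^+]$, using the vertex inequalities \eqref{cond:mp12} from Lemma~\ref{lemma:n:disc:0} to verify $F_k^-\le s_k-\tfrac{1}{4}\delta_k<s_k<s_k+\tfrac{1}{4}\delta_k\le F_k^+$. Your explicit upgrade of the vertex inequalities to face-wise Miranda conditions via monotonicity is a harmless elaboration of what the cited lemma already packages, not a different argument.
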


The proof of the above lemma is based on the following multi-dimensional version
of the intermediate value theorem, which was established in \cite[Lemma~3.5]{HKP97}.

\begin{lemma}\label{lemma-hempel}
Let $\mathcal{D}=\Pi_{k=1}^n[a_k, b_k]$ with $a_k < b_k$, $k=1,\dots,n$, assume that 
$f:\mathcal{D}\to\R^n$ is continuous and each coordinate function $f_k$ of $f$ is
monotonically increasing in each of its arguments. If
$F_k^-<F_k^+$, $k=1,\dots,n$, where
\begin{gather*}
F_k^-=f_k(b_1,b_2,\dots,b_{k-1},a_k,b_{k+1},\dots,b_n),\quad
F_k^+=f_k(a_1,a_2,\dots,a_{k-1},b_k,a_{k+1},\dots,b_n),
\end{gather*}
then for any $F\in\Pi_{k=1}^n[F_k^-,F_k^+]$
there exists  $x\in\mathcal{D}$ such that $f(x)=F$.
\end{lemma}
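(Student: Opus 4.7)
The plan is to reduce the statement to the Poincar\'e--Miranda theorem, the standard multidimensional generalization of the one-dimensional intermediate value theorem. Fix $F \in \prod_{k=1}^n [F_k^-, F_k^+]$ and define $g : \mathcal{D} \to \R^n$ by $g(x) = f(x) - F$; the conclusion is then equivalent to the existence of a zero of $g$ in $\mathcal{D}$.

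First I would verify that each coordinate function $g_k$ has the appropriate sign on the two faces of $\mathcal{D}$ perpendicular to the $k$-th axis. For any $x \in \mathcal{D}$ with $x_k = a_k$, monotonicity of $f_k$ in the remaining variables gives
\begin{equation*}
g_k(x) = f_k(x_1,\dots,a_k,\dots,x_n) - F_k \leq f_k(b_1,\dots,b_{k-1},a_k,b_{k+1},\dots,b_n) - F_k = F_k^- - F_k \leq 0,
\end{equation*}
and analogously $g_k(x) \geq F_k^+ - F_k \geq 0$ whenever $x_k = b_k$. These are precisely the sign hypotheses needed to invoke Poincar\'e--Miranda for the map $g$.

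To establish the Poincar\'e--Miranda step itself I would apply Brouwer's fixed point theorem to the continuous self-map $\Phi : \mathcal{D} \to \mathcal{D}$ with components
\begin{equation*}
\Phi(x)_k = \max\bigl\{a_k,\,\min\{b_k,\,x_k - g_k(x)\}\bigr\},\qquad k=1,\dots,n.
\end{equation*}
Brouwer yields a fixed point $x^* \in \mathcal{D}$. For each $k$, the ``clipping at $a_k$'' alternative would force $x^*_k = a_k$ together with $g_k(x^*) > 0$, contradicting the sign inequality on the face $\{x_k = a_k\}$; the clipping at $b_k$ is excluded symmetrically. Hence $x^*_k - g_k(x^*) \in [a_k, b_k]$ for every $k$, and the fixed-point identity $\Phi(x^*) = x^*$ reduces to $g_k(x^*) = 0$ for all $k$, i.e., $f(x^*) = F$.

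The main obstacle is really just identifying the cleanest route: the reduction of the hypothesis to sign conditions on opposite faces is immediate from the monotonicity of $f_k$, but the Brouwer step must be set up carefully so that the truncated self-map's fixed-point equation \emph{forces} either $g_k(x^*) = 0$ or a direct contradiction with the boundary sign inequalities. A more self-contained but technically heavier alternative would proceed by induction on $n$, solving $f_n = F_n$ along fibers via the one-dimensional IVT and using connectedness of the solution set to continue the construction in the remaining coordinates; I would avoid this route unless Brouwer's theorem is considered out of place in the context.
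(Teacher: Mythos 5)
Your proof is correct, but note that the paper does not actually prove this lemma at all --- it is imported verbatim from \cite[Lemma~3.5]{HKP97} --- so your argument is by construction a different, self-contained route. The reduction is clean: you use the monotonicity of each $f_k$ in all of its arguments only to verify the Poincar\'e--Miranda sign conditions $g_k\le 0$ on the face $\{x\in\mathcal{D}: x_k=a_k\}$ and $g_k\ge 0$ on $\{x\in\mathcal{D}: x_k=b_k\}$, and the truncated self-map $\Phi$ together with Brouwer's fixed point theorem then does the rest; the case analysis at the fixed point is airtight (strict clipping at $a_k$ forces $x_k^*=a_k$ and $g_k(x^*)>0$, contradicting the face inequality, symmetrically at $b_k$, so no clipping occurs and $g_k(x^*)=0$). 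What this buys is the observation that the monotonicity hypothesis is needed only to manufacture the boundary data, so the lemma is a special case of the classical Poincar\'e--Miranda theorem, whereas the argument in \cite{HKP97} is an elementary one tailored to the monotone structure that avoids Brouwer. One small remark: your inequality $f_k(x)\ge F_k^+$ on the face $x_k=b_k$ tacitly reads the last argument in the displayed formula for $F_k^+$ as $a_n$ rather than the printed $b_n$; that is clearly a typo in the statement (with $b_n$ the inequality, and the lemma itself, fails for $k<n$, e.g.\ for $f(x_1,x_2)=(x_1+x_2,\,x_2)$ on $[0,1]^2$ with $F=(2,0)$), and your reading is the intended one, consistent with \cite{HKP97}.
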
\smallskip

\begin{proof}[Proof of Lemma~\ref{lemma:exact:n}] We fix $n\in \N$ and set
$\mathcal{D}=\Pi_{k=1}^{n }[\al_k^-,\al_k^+]$; the points 
in $\mathcal{D}$ will be denoted in the form $(  \al_1,  \al_2,\dots, \al_{n })$.
Now consider the function
$
 f:\mathcal{D}\to\mathbb{R}^{n }$ given by \eqref{f}. As noted above, this function
is continuous and each coordinate function $f_k(\cdot) $ is monotonically increasing in each of its arguments.
Moreover, according to \eqref{cond:mp12} we have
\begin{gather*}
 F_k^-\coloneqq 
s^\be_k[\al_1^+,\dots,\al_{k-1}^+,\al_k^-,\al_{k+1}^+,\dots,\al_{n }^+]\leq s_k-\frac{1}{4}\delta_k
\end{gather*}
and
\begin{gather*}
 F_k^+\coloneqq 
s^\be_k[\al_1^-,\dots,\al_{k-1}^-,\al_k^+,\al_{k+1}^-,\dots,\al_{n }^-]\geq s_k+\frac{1}{4}\delta_k
\end{gather*}
for $k=1,\dots,n$, and hence, in particular,  $F_k^-<s_k<F_k^+$ for $k=1,\dots,n$.
Therefore, by Lemma~\ref{lemma-hempel} there exists $(  \al_1,  \al_2,\dots,  \al_{n })\in\mathcal{D}$ such that
$$f(  \al_1,  \al_2,\dots, \al_{n })=(s_1,s_2,\dots,s_{n});$$
this completes the proof of Lemma~\ref{lemma:exact:n}.
\end{proof}

Combining Lemma~\ref{lemma:n:disc:0} and Lemma~\ref{lemma:exact:n} we immediately arrive at the main result of this subsection.

\begin{theorem}
\label{th:nspec} 
Let $\al^n=(\alpha^n_k)_{k\in \Z}$   be a sequence satisfying Hypothesis~\ref{hypo31},
and assume that $\beta=(\be_k)_{k\in\Z}$ is such that $\be'_k\le\be_k$, $k\in\Z$, where $(\be'_k)_{k\in\Z}$ is a sequence as in Lemma~\ref{lemma:n:disc:0}.
Then for $n\in\N$ the entries $\alpha^n_k\in [\al_k^-,\al_k^+]$, $k=1,\dots,n$, can be chosen such that 
\begin{gather}
\label{th:nspec:1} 
\sigma(\Hk^n_{\al^n,\beta})\cap (T_1,T_2)=
\left\{s^n_{\al^n,\be;\, k} : k\in \mathcal{K}^n\cup \{1,\dots,n\}\right\},
\end{gather}
where $s^n_{\al^n,\be;\, k}$ are simple eigenvalues of $\Hk^n_{\al^n,\beta}$ satisfying 
\begin{gather}
\label{th:nspec:2}  
\begin{array}{ll}
s^n_{\al^n,\be;\, k}\in B_{\delta_k}(s_k),&k\in \mathcal{K}^n ,\\[1mm]
s^n_{\al^n,\be;\, k}=s_k,&k\in \{1,\dots,n\}.
\end{array}
\end{gather}
\end{theorem}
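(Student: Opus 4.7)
The statement is essentially a direct synthesis of Lemma~\ref{lemma:n:disc:0} and Lemma~\ref{lemma:exact:n}, so the plan is to stitch the two together with care about which entries of $\alpha^n$ are free and which are forced by Hypothesis~\ref{hypo31}.

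First, fix the sequence $(\be_k)_{k\in\Z}$ with $\be_k'\le \be_k$ from the hypothesis, and note that for $k\in\Z\setminus\N$ the entries of any sequence $\alpha^n$ satisfying Hypothesis~\ref{hypo31} are forced: $\alpha^n_k=\FF^D_{d_k}(s_k)$. For $k>n$ we simply fix arbitrary values $\alpha^n_k\in[\alpha^-_k,\alpha^+_k]$ (their choice plays no role, as $\Hk^n_{\alpha^n,\be}$ is independent of $\alpha_k^n$ for $k>n$). What remains is to choose $\alpha^n_1,\dots,\alpha^n_n$ in the boxes $[\alpha_k^-,\alpha_k^+]$.

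Next, apply Lemma~\ref{lemma:exact:n} (which is itself the consequence of the multi-dimensional intermediate value theorem, Lemma~\ref{lemma-hempel}, applied to the continuous, coordinatewise monotone map $f$ in \eqref{f}, together with the sign estimates \eqref{cond:mp12} inherited from \eqref{induction:pm}). This yields entries $\alpha^n_1,\dots,\alpha^n_n\in[\alpha_k^-,\alpha_k^+]$ with the property
\begin{gather*}
s^\be_k[\alpha^n_1,\dots,\alpha^n_n]=s_k,\qquad k=1,\dots,n.
\end{gather*}
By definition this means $s^n_{\alpha^n,\be;\,k}=s_k$ for $k=1,\dots,n$, giving the second line of \eqref{th:nspec:2}.

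Finally, invoke Lemma~\ref{lemma:n:disc:0} for the completed sequence $\alpha^n$ (which satisfies Hypothesis~\ref{hypo31} by construction) and the given $\be$. This gives the full description \eqref{induction} of $\sigma(\Hk^n_{\alpha^n,\be})\cap(T_1,T_2)$ as the simple eigenvalues $\{s^n_{\alpha^n,\be;\,k}: k\in\mathcal{K}^n\cup\{1,\dots,n\}\}$, each lying in $B_{\delta_k}(s_k)$. This is exactly \eqref{th:nspec:1}, and for $k\in\mathcal{K}^n$ it yields the first line of \eqref{th:nspec:2}. Combined with the equality $s^n_{\alpha^n,\be;\,k}=s_k$ from the previous step for $k\in\{1,\dots,n\}$, both assertions of the theorem follow.

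There is essentially no new obstacle at the level of the theorem itself; the whole technical effort has been absorbed into the two preparatory lemmas. The only point that needs a brief check is logical consistency: the sequence $(\be'_k)$ in the hypothesis is the \emph{same} sequence from Lemma~\ref{lemma:n:disc:0} that was used to guarantee \eqref{induction:pm}, so the estimates \eqref{cond:mp12} that feed Lemma~\ref{lemma-hempel} are indeed available for the fixed $\be$.
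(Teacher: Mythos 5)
Your proposal is correct and follows exactly the paper's route: the paper gives no separate argument for Theorem~\ref{th:nspec} beyond the remark that it follows immediately by combining Lemma~\ref{lemma:n:disc:0} and Lemma~\ref{lemma:exact:n}, which is precisely what you do (with the additional, harmless bookkeeping about which entries of $\al^n$ are forced by Hypothesis~\ref{hypo31} and which are irrelevant for $k>n$).
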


\begin{remark}\label{rem:aldependonbe}
We mention that the entries $\al^n_k$, $k=1,\dots,n$, in the sequence $\al^n=(\alpha^n_k)_{k\in \Z}$ chosen above 
depend on the choice of the sequence $\beta$. 
\end{remark}

\subsection{Spectrum of the operator $\H_{\al^n,\be}^n$}

Let $n\in\N$ and let $\Hk\aab^n$ be the self-adjoint operator in $\L(x_{-n},x_{n})$ from the previous subsection.
In this subsection we will investigate the self-adjoint operator 
\begin{gather}\label{oplus}
\H\aab^n=
\left(\bigoplus_{ k\leq -n}\Hk^D_{\I_k,\al_k}\right)
\oplus   \Hk\aab^n \oplus
\left(\bigoplus_{ k\geq n+1}\Hk^D_{\I_k,\al_k}\right)
\end{gather}
acting in 
$$\L\interval=\left(\bigoplus_{ k\leq -n}\L(\I_k)\right)
\oplus \L(x_{-n},x_{n}) \oplus
\left(\bigoplus_{ k\geq n+1}\L(\I_k)\right).$$ 
Informally speaking the operator $\H\aab^n$ is obtained from the decoupled operator $\H\aa$ in Section~\ref{decosec} by adding
$\delta$-couplings of the strength $\be_k$ at \emph{finitely} many points $x_k$, $k=-n+1,\dots, n-1$. It is clear that
$\H\aab^n$ (and $\Hk\aab^n$) is independent of $\be_k$ with $k\not\in\{-n+1,\dots,n-1\}$.

It is convenient to strengthen Hypothesis~\ref{hypo41} and from now on to consider sequences  $\be=(\be_k)_{k\in\Z}$ that satisfy the following
condition.

\begin{hypothesis}\label{hypo61}
The sequence 
$\be=(\beta_k)_{k\in\Z}$ satisfies Hypothesis~\ref{hypo41} and, in addition, it is assumed that
\eqref{rho-cond} holds and $\be'_k\le\be_k$, $k\in\Z$,
where $(\be'_k)_{k\in\Z}$ denotes the sequence in Lemma~\ref{lemma:n:disc:0}.
\end{hypothesis}

The following theorem is a consequence of Theorem~\ref{th:nspec} and the considerations in Section~\ref{sec:deco} 
and Section~\ref{sec:ess}.

\begin{theorem}\label{th:n:essdisc} 
Let $\al^n=(\alpha^n_k)_{k\in \Z}$ and $\be=(\be_k)_{k\in\Z}$ be sequences satisfying Hypothesis~\ref{hypo31} and Hypothesis~\ref{hypo61}, and 
assume that $\al^n=(\alpha^n_k)_{k\in \Z}$ is chosen such that \eqref{th:nspec:1}--\eqref{th:nspec:2} hold and 
\begin{gather}\label{alpha:fix22}
\al^n_k=\FF^D_{d_k}(s_k),\quad k\in\N\setminus\{1,\dots,n\}.
\end{gather}
Then one has 
\begin{gather} 
\label{spec:n:ess}
\sigma_{\ess}(\H_{\al^n,\be}^n)=S_\ess\quad\text{and}\quad
\sigma_{\disc}(\H_{\al^n,\be}^n)\cap (T_1,T_2)=
S_\disc,
\end{gather}
and, moreover, each  $s_k$, $k\in\N$, is a simple eigenvalue of $\H_{\al^n,\be}^n$.
\end{theorem}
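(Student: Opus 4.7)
The plan is to exploit the direct sum decomposition \eqref{oplus} and apply the spectral machinery of Appendix~A. By Theorem~\ref{th:A:spec}, the essential spectrum of $\H_{\al^n,\be}^n$ coincides with the set of accumulation points of the combined family of eigenvalues of all summands, while its discrete spectrum inside $(T_1,T_2)$ consists of the isolated finite-multiplicity eigenvalues in that interval that remain after removing the accumulation points. So the whole task reduces to bookkeeping on the three summands of \eqref{oplus}.

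First, the finite middle block $\Hk_{\al^n,\be}^n$ has purely discrete spectrum and, by Theorem~\ref{th:nspec}, contributes inside $(T_1,T_2)$ the simple eigenvalues $s^n_{\al^n,\be;k}$ for $k\in\mathcal{K}^n\cup\{1,\dots,n\}$; these are exactly $s_k$ when $k\in\{1,\dots,n\}$ and lie in $B_{\delta_k}(s_k)$ when $k\in\mathcal{K}^n$. Being a single operator on a bounded interval, this block contributes no accumulation points. For the two decoupled tails, Proposition~\ref{prop:HD:2} together with Hypothesis~\ref{hypo31} and \eqref{alpha:fix22} yields $\lambda_1(\Hk^D_{\I_k,\al^n_k})=s_k$ for every $k\leq -n$ and every $k\geq n+1$, while by \eqref{d:assump1} the second eigenvalue $\lambda_2(\Hk^D_{\I_k,\al^n_k})=(2\pi/d_k)^2$ exceeds $T_2$. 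Hence, up to the finitely many indices handled by the middle block, the combined eigenvalue family agrees with $(s_k)_{k\in\Z}$ plus a family of higher eigenvalues which diverges to $+\infty$ as $|k|\to\infty$ since $d_k\to 0$ by \eqref{dto0}. The accumulation points therefore coincide with those of $(s_k)_{k\in\Z}$: the subfamily $(s_k)_{k\in\Zm}$ has accumulation set $S_\ess$ by \eqref{Sprop1+}, and $(s_k)_{k\in\N}$ has all its accumulation points in $S_\ess$ by \eqref{Sprop5}. This gives $\sigma_\ess(\H_{\al^n,\be}^n)=S_\ess$.

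For the discrete spectrum inside $(T_1,T_2)$ I would separate the eigenvalues by summand. From the middle block, the values $s^n_{\al^n,\be;k}$ with $k\in\mathcal{K}^n$ lie in $B_{\delta_k}(s_k)\subset\opset\subset S_\ess$ by \eqref{delta3}, so they are absorbed by the essential spectrum and do not contribute to $\sigma_\disc$; the values $s_k$ with $k\in\{1,\dots,n\}$ do contribute. From the tails, the eigenvalues $s_k$ with $k\in\mathcal{K}\setminus\mathcal{K}^n$ lie in $\opset\subset S_\ess$ by \eqref{Sprop2+}, those with $k\in\Zm\setminus\mathcal{K}$ fall outside $(T_1,T_2)$ by definition of $\mathcal{K}$, and those with $k\geq n+1$ contribute the remaining $s_k$. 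Summing up, $\sigma_\disc(\H_{\al^n,\be}^n)\cap(T_1,T_2)=\{s_k:k\in\N\}=S_\disc$. Finally, each $s_k$ with $k\in\N$ is a simple eigenvalue of exactly one summand (of $\Hk_{\al^n,\be}^n$ when $k\leq n$ by Theorem~\ref{th:nspec}, of $\Hk^D_{\I_k,\al^n_k}$ when $k\geq n+1$ by Proposition~\ref{prop:HD:2}) and is not an eigenvalue of any other summand: by \eqref{Sprop4} the values $s_l$ with $l\in\N\setminus\{k\}$ differ from $s_k$, by \eqref{Sprop3} and \eqref{Sprop2+} the values $s_l$ with $l\in\mathcal{K}$ sit in $\opset$ while $s_k\in(T_1,T_2)\setminus\overline{\opset}$, and the higher eigenvalues of every summand exceed $T_2$. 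The one point requiring genuine care is confirming that the approximate eigenvalues $s^n_{\al^n,\be;k}$ for $k\in\mathcal{K}^n$ indeed sit inside $\opset$, which is built into the choice of $\delta_k$ through \eqref{delta3}; this is what prevents them from spawning spurious points of the discrete spectrum.
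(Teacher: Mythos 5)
Your proposal is correct, and the discrete-spectrum half follows the paper's own proof essentially verbatim: the same decomposition \eqref{oplus}, the same bookkeeping of which first eigenvalues land in $\opset\subset S_\ess$, which fall outside $[T_1,T_2]$, and which survive as the simple eigenvalues $s_k$, $k\in\N$. Where you genuinely diverge is the essential-spectrum half. The paper observes that $(\H_{\al^n,\be}^n-\lambda \Id)^{-1}-(\H_{\al^n,\infty}-\lambda \Id)^{-1}$ is a finite-rank (hence compact) operator, because the two operators are self-adjoint extensions of a common symmetric operator with finite defect, and then invokes Weyl's theorem together with Theorem~\ref{bigthm1}. You instead apply Theorem~\ref{th:A:spec}(iii) directly to the direct sum \eqref{oplus}, noting that the middle block and the higher eigenvalues of the tails produce no finite accumulation points, so the accumulation set is that of $(s_k)_{|k|\ge n+1}$, which equals $S_\ess$ by \eqref{Sprop1+} and \eqref{Sprop5}. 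Both arguments are sound; the paper's route is shorter because it recycles Theorem~\ref{bigthm1} wholesale and avoids re-examining the eigenvalue family, while yours is more self-contained (it does not need the finite-defect/Weyl step at all) at the cost of redoing the accumulation-point analysis already carried out in the proof of Theorem~\ref{bigthm1}. The one point you flag as delicate --- that $s^n_{\al^n,\be;k}\in B_{\delta_k}(s_k)\subset\opset$ for $k\in\mathcal{K}^n$ --- is indeed exactly the role of \eqref{delta3}, and the paper handles it the same way.
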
 

\begin{proof}
It is not difficult to see that the resolvent difference
$$(\H_{\al^n ,\be}^n-\lambda \Id)^{-1}-(\H_{\al^n,\infty}-\lambda \Id)^{-1}$$
is a finite rank operator for any $\lambda\in\rho(\H_{\al^n ,\be}^n)\cap\rho(\H_{\al^n,\infty})$, and hence, in particular, a compact operator in $\L\interval$;
this follows, e.g., by observing that both operators 
$\H_{\al^n,\be}^n$ and $\H_{\al^n,\infty}$ can be viewed as selfadjoint extensions of {the symmetric operator $\H_{\al^n,\be}^n\cap\H_{\al^n,\infty}$}, which has finite defect.
Hence we have
\begin{gather*} 
\sigma_\ess(\H_{\al^n ,\be}^n)=\sigma_\ess(\H_{\al^n,\infty})=S_\ess
\end{gather*}
by Theorem~\ref{bigthm1} and this shows the first assertion in \eqref{spec:n:ess}. 

Now we study the discrete spectrum of the operator $\H_{\al^n ,\be}^n$ in $(T_1,T_2)$. It is clear from \eqref{oplus} that
\begin{gather*}%\label{opsumi}
 \sigma(\H_{\al^n,\be}^n)=\sigma\left(\bigoplus_{ k\leq -n}\Hk^D_{\I_k,\al_k^n}\right)\cup\sigma(\Hk_{\alpha^n,\beta}^n) \cup 
 \sigma\left(\bigoplus_{ k\geq n+1}\Hk^D_{\I_k,\al_k^n}\right).
\end{gather*}
Recall from Proposition~\ref{prop:HD:2} that 
$\lambda_1(\Hk^D_{\I_k,\al^n_k})$ coincides with the unique solution of the equation
$\al^n_k=\FF^D_{d_k}(\lambda)$ on the interval $(0,(2\pi/ d_k)^2)$. Thus, taking into account  \eqref{alpha:fix22} and the second property in \eqref{al:cond}, we arrive at
\begin{gather}\label{disc:cond:3}
\lambda_1(\Hk^D_{\I_k,\al_k^n})=s_k,\qquad k\in\Z\setminus\{1,\dots,n\}.
\end{gather}
 Furthermore, we have $\lambda_j(\Hk^D_{\I_k,\al_k})>T_2$ for $j\geq 2$ by Proposition~\ref{prop:HD:2} and \eqref{d:assump1}.
Observe that for $k\in\Z\setminus\N$ the eigenvalues in \eqref{disc:cond:3} do not contribute to the discrete spectrum of $\H_{\al^n,\be}^n$ in $(T_1,T_2)$
since either $s_k\in\opset\subset S_\ess$ or $s_k\not\in [T_1,T_2]$; cf. \eqref{delta3} and \eqref{Sprop2}. It follows that
\begin{gather*}
 \sigma\left(\bigoplus_{ k\leq -n}\Hk^D_{\I_k,\al_k^n}\right)\cap (T_1,T_2)\subset S_\ess.
\end{gather*}
The above considerations also show 
\begin{gather*}
 \sigma\left(\bigoplus_{ k\geq n+1}\Hk^D_{\I_k,\al_k^n}\right)\cap (T_1,T_2) = \bigl\{s_k:k=n+1,n+2,\dots\bigr\},
\end{gather*}
and all the eigenvalues $s_k$, $k=n+1,n+2,\dots$, are simple by the assumption \eqref{Sprop4}. Finally, by Theorem~\ref{th:nspec} 
the spectrum of $\Hk\aab^n$ in $(T_1,T_2)$ consists of the simple eigenvalues $s_k$, $k=1,\dots,n$, and 
the eigenvalues $s^n_{\al^n,\be;\, k}\in B_{\delta_k}(s_k)$ for $k\in \mathcal{K}^n$. However, it follows from \eqref{delta3} and \eqref{Sprop2} that 
$s^n_{\al^n,\be;\, k}\subset S_\ess$ for $k\in \mathcal{K}^n$. Summing up we conclude 
$$
\sigma_{\disc}(\H_{\al^n,\be}^n)\cap (T_1,T_2)=\bigl\{s_k:k\in\N\bigr\}=S_\disc.
$$
\end{proof}

\section{Discrete spectrum of the operator $\H\aab$\label{sec:disc}}

In this section we complete the proof of our main result Theorem~\ref{th:main:pre}. Recall that the ultimate aim is to show the existence of sequences 
$\al=(\alpha_k)_{k\in \Z}$ and $\be=(\be_k)_{k\in\Z}$ such that \eqref{main-ess} and \eqref{main-disc} hold. 
We have already shown in Theorem~\ref{th:ess} that
the assertion \eqref{main-ess} 
on the essential spectrum of $\H\aab$ holds for all sequences $\al=(\alpha_k)_{k\in \Z}$ and $\be=(\be_k)_{k\in \Z}$ that 
satisfy Hypothesis~\ref{hypo31} and Hypothesis~\ref{hypo41}, respectively. 

From now on  we \emph{fix} a sequence $\be=(\be_k)_{k\in \Z}$ that satisfies Hypothesis~\ref{hypo61}  
(and hence also Hypothesis~\ref{hypo41}). Now we define a sequence $\al=(\alpha_k)_{k\in \Z}$ such that 
Hypothesis~\ref{hypo31} holds and  the statement \eqref{main-disc} on the discrete
spectrum of $\H\aab$ is valid: 
By Theorem~\ref{th:n:essdisc} there exists for each $n\in\N$ a sequence   
$\al^n=(\alpha^n_k)_{k\in \Z}$ such that Hypothesis~\ref{hypo31}, \eqref{alpha:fix22} and \eqref{spec:n:ess} hold, and, 
in particular, we have
\begin{gather}\label{enclo}
\al^-_k\leq \al_k^n\leq \al^+_k,\qquad k\in \Z.
\end{gather}
A usual diagonal process shows that there exist
$n_m\in\N$ with $n_m< n_{m+1}$ and $\lim_{m\to\infty}n_m=\infty$ 
and a sequence $\al=(\al_k)_{k\in\Z}$ such that
\begin{gather}\label{alpha:lim}
\al_k^{n_m} \to \al_k\text{ as } m\to\infty\qquad k\in\Z.
\end{gather}
It also follows that $\alpha_k\in [\alpha_k^-,\alpha_k^+]$ for $k\in\Z$, moreover,
by the second property in \eqref{al:cond} we have $\al_k=\al_k^n=\FF^D_{d_k}(s_k)$ for $k\in\Z\setminus\N$. In other words, the sequence $\al=(\al_k)_{k\in\Z}$ satisfies Hypothesis~\ref{hypo31}. Note also that $\al$ depends on the sequence $\beta$ fixed above; cf.~Remark~\ref{rem:aldependonbe}.

\begin{lemma}\label{lemma:nrc}
For the sequence   $\al$ defined by \eqref{alpha:lim} one has
$$\|(\H_{  \al^{n_m},\be}^{n_m}-\mu \Id)^{-1}-(\H_{   \al ,\be}-\mu \Id)^{-1}\|\to 0,\ m\to\infty,$$
where $\mu$ is defined by \eqref{mu}.
\end{lemma}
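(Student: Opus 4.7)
The plan is to adapt the integration-by-parts computation from the proof of Theorem~\ref{th:ess}. Let $R^{n_m}=(\H_{\al^{n_m},\be}^{n_m}-\mu\Id)^{-1}$ and $R=(\H_{\al,\be}-\mu\Id)^{-1}$, and for $f,g\in\L\interval$ set $u^m=R^{n_m}f$ and $v=Rg$. Writing
\begin{gather*}
((R^{n_m}-R)f,g)_{\L\interval}=(u^m,\H_{\al,\be}v)_{\L\interval}-(\H_{\al^{n_m},\be}^{n_m}u^m,v)_{\L\interval}
\end{gather*}
and performing the integration by parts that led to \eqref{res-difference0}, the $y_k$ contributions no longer cancel because $u^m$ and $v$ carry different jump strengths $\al_k^{n_m}$ and $\al_k$, and the $x_k$ contributions cancel for $|k|\leq n_m-1$ but not for $|k|\geq n_m$ (where $u^m$ satisfies Dirichlet while $v$ carries the $\delta$-coupling of strength $\be_k$). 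The resulting identity is
\begin{gather*}
((R^{n_m}-R)f,g)=\sum_{k\in\N}(\al_k-\al_k^{n_m})u^m(y_k)\overline{v(y_k)}+\sum_{|k|\geq n_m}\overline{v(x_k)}\bigl[(u^m)'(x_k-0)-(u^m)'(x_k+0)\bigr],
\end{gather*}
the first sum being restricted to $k\in\N$ since $\al_k=\al_k^{n_m}=\FF_{d_k}^D(s_k)$ for $k\in\Z\setminus\N$ by Hypothesis~\ref{hypo31}.

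For the second sum I plan to follow the strategy of Theorem~\ref{th:ess} almost verbatim. Cauchy-Schwarz with weights $D_k^{\pm 3/2}$ produces the bound
\begin{gather*}
\biggl(\sum_{|k|\geq n_m}D_k^{-3}|v(x_k)|^2\biggr)^{1/2}\biggl(\sum_{|k|\geq n_m}D_k^{3}\bigl|(u^m)'(x_k-0)-(u^m)'(x_k+0)\bigr|^2\biggr)^{1/2}.
\end{gather*}
The first factor is a tail of $\Gamma_{\al,\be}g$ in the sense of Lemma~\ref{lemma:gamma}; the estimate \eqref{gaga1++} (applied to $\H_{\al,\be}$ in the $|k|\geq n_m$ index range) shows that this tail is bounded by $\sqrt{C_{n_m}}\,\|g\|$ with $C_{n_m}\to 0$ as $m\to\infty$, thanks to hypothesis~\eqref{rho-cond}. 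The second factor I will bound uniformly in $m$ by a Lemma~\ref{lemma:gamma0}-type estimate for $u^m$ and the operator $\H_{\al^{n_m},\be}^{n_m}$: on every interval $\I_k$ with $|k|\geq n_m$ the function $u^m$ vanishes at the $x_k$-endpoint by construction, so integration by parts against the linear weight $\w_k$ delivers \eqref{vk-0} with $\al_k$ replaced by $\al_k^{n_m}$, and summing yields $C\|f\|^2$ with $C$ independent of $m$.

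For the first sum the main input is $|\al_k-\al_k^{n_m}|\leq c_kd_k$ with $c_k\to 0$ (from \eqref{a-a}), combined with the trace inequality $|w(y_k)|^2\leq(2/d_k)\|w\|^2_{\L(\I_k)}+2d_k\|w'\|^2_{\L(\I_k)}$. I will split $\sum_{k\in\N}=\sum_{k\leq N}+\sum_{k>N}$. For each fixed $k\leq N$, the diagonal construction \eqref{alpha:lim} forces $\al_k^{n_m}\to\al_k$ as $m\to\infty$, while the point values $u^m(y_k)$ and $v(y_k)$ remain bounded by a $k$-dependent constant times $\|f\|$ and $\|g\|$; hence the finite part tends to zero as $m\to\infty$ uniformly in $\|f\|,\|g\|\leq 1$. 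The tail $\sum_{k>N}$ is controlled by $(\sup_{k>N}c_k)\cdot\sum_kd_k|u^m(y_k)v(y_k)|$, and the latter sum will be shown to be $\leq C\|f\|\|g\|$ uniformly in $m$ using Cauchy-Schwarz, the trace inequality, Lemma~\ref{neumannchen}, and the form bound $\h_{\al^{n_m},\be}^{n_m}[u^m,u^m]\leq(1+|\mu|)\|f\|^2$. Taking $N$ large then makes this tail arbitrarily small.

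The main obstacle will be establishing the uniform-in-$m$ bounds just outlined, in particular the Lemma~\ref{lemma:gamma0}-type control of $\sum_{|k|\geq n_m}D_k^{3}|(u^m)'(x_k\pm 0)|^2$ and the bound on $\sum_kd_k|u^m(y_k)|^2$. The subtle point is that the form value $\h_{\al^{n_m},\be}^{n_m}[u^m,u^m]$ controls $\sum_k\|(u^m)'\|^2_{\L(\I_k)}$ only modulo the indefinite boundary terms $\sum_k\al_k^{n_m}|u^m(y_k)|^2$, which are negative (by \eqref{al<0}) but of size up to $\widehat C/d_k$ (by \eqref{alpha-inf}); Lemma~\ref{neumannchen} will be applied repeatedly to absorb these contributions into gradient norms plus lower-order $d_k^{-2}$-weighted $L^2$-terms, and the decay \eqref{dto0} of $d_k$ will ensure summability of the resulting tails.
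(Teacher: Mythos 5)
Your proposal is correct and follows essentially the same route as the paper's proof: the same resolvent-difference identity split into a $y_k$-sum (controlled via \eqref{a-a}, \eqref{alpha:lim}, the Sobolev trace inequality with a $d_k/\widehat{C}$-scaled parameter, and Lemma~\ref{neumannchen}) and an $x_k$-tail sum (controlled by the tail estimate \eqref{gaga1++} from Lemma~\ref{lemma:gamma} together with the Lemma~\ref{lemma:gamma0}-type bound, which indeed does not use the vanishing of the function at the endpoints). The only difference is that you apply the partly coupled resolvent to $f$ and the fully coupled one to $g$, whereas the paper does the opposite; this is immaterial, and the absorption subtlety you flag at the end is handled in the paper exactly as you anticipate.
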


Before we prove the above lemma, we observe that
Theorem~\ref{th:ess}, Theorem~\ref{th:n:essdisc} together with Lemma~\ref{lemma:nrc}
immediately imply the main result of this section.

\begin{theorem}\label{th:disc} 
For the sequence   $\al$ defined by \eqref{alpha:lim} one has
\begin{gather*} 
\sigma_{\disc}(\H_{\al ,\be} )\cap (T_1,T_2)=
S_\disc,
\end{gather*}
moreover, each  $s_k$, $k\in\N$, is a simple eigenvalue.
\end{theorem}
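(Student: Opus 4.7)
The plan is to combine the norm-resolvent convergence from Lemma~\ref{lemma:nrc} with the spectral description of the partly coupled operators given by Theorem~\ref{th:n:essdisc} and the essential spectrum identity $\sigma_{\ess}(\H\aab)=S_\ess$ from Theorem~\ref{th:ess}. The central analytic ingredient I would invoke (see e.g.~\cite[Sec.~VIII.1]{K66}) is the standard fact that norm-resolvent convergence $H_{n}\to H$ of self-adjoint operators entails norm-convergence of spectral projections, $E_{H_{n}}([a,b])\to E_{H}([a,b])$, whenever $a,b\in\rho(H)$; this follows by representing each projection as a Riesz contour integral $-\tfrac{1}{2\pi i}\oint_\Gamma(H-z)^{-1}\d z$ over a contour $\Gamma\subset\rho(H)$ enclosing $[a,b]$ and using that the resolvent difference tends to zero uniformly on $\Gamma$.

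First I would establish the inclusion $S_\disc\subset\sigma_\disc(\H\aab)\cap(T_1,T_2)$. Fix $s_k\in S_\disc$. By Theorem~\ref{th:n:essdisc} and the second line of \eqref{th:nspec:2}, $s_k\in\sigma(\H_{\al^n,\be}^n)$ for every $n\ge k$. If one had $s_k\in\rho(\H\aab)$, then Lemma~\ref{lemma:nrc} would force $s_k\in\rho(\H_{\al^{n_m},\be}^{n_m})$ for all sufficiently large $m$, a contradiction. Hence $s_k\in\sigma(\H\aab)$, and since $\sigma_{\ess}(\H\aab)=S_\ess$ and $s_k\in(T_1,T_2)\setminus\overline{\opset}\subset\R\setminus S_\ess$ by \eqref{Sprop2}--\eqref{Sprop3}, the point $s_k$ is an isolated eigenvalue of finite multiplicity, i.e.\ $s_k\in\sigma_\disc(\H\aab)$.

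Next I would prove the reverse inclusion by contradiction. Suppose there exists $\lambda_0\in\bigl(\sigma_\disc(\H\aab)\cap(T_1,T_2)\bigr)\setminus S_\disc$. Because $\lambda_0\notin\sigma_{\ess}(\H\aab)=S_\ess$, we have $\lambda_0\in(T_1,T_2)\setminus\overline{\opset}$; since $\lambda_0$ is isolated in $\sigma(\H\aab)$ and cannot be an accumulation point of $S_\disc$ (by \eqref{Sprop5}), I can choose $\delta>0$ so small that $[\lambda_0-\delta,\lambda_0+\delta]\subset(T_1,T_2)\setminus\overline{\opset}$, $[\lambda_0-\delta,\lambda_0+\delta]\cap S_\disc=\varnothing$, and $\sigma(\H\aab)\cap[\lambda_0-\delta,\lambda_0+\delta]=\{\lambda_0\}$ (so in particular $\lambda_0\pm\delta\in\rho(\H\aab)$). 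Theorem~\ref{th:n:essdisc} yields $\sigma(\H_{\al^n,\be}^n)\cap(T_1,T_2)\subset\overline{\opset}\cup S_\disc$ for every $n$, hence $\sigma(\H_{\al^n,\be}^n)\cap[\lambda_0-\delta,\lambda_0+\delta]=\varnothing$ for every $n$. Consequently $E_{\H_{\al^{n_m},\be}^{n_m}}([\lambda_0-\delta,\lambda_0+\delta])=0$ for all $m$, and the norm convergence of spectral projections forces $E_{\H\aab}([\lambda_0-\delta,\lambda_0+\delta])=0$, contradicting $\lambda_0\in\sigma(\H\aab)$.

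Finally, I would deduce simplicity by the same rank-count. For fixed $s_k\in S_\disc$, use the identity $\sigma_\disc(\H\aab)\cap(T_1,T_2)=S_\disc$ just proved to pick $\delta>0$ with $[s_k-\delta,s_k+\delta]\subset(T_1,T_2)\setminus\overline{\opset}$, $[s_k-\delta,s_k+\delta]\cap S_\disc=\{s_k\}$, and $s_k\pm\delta\in\rho(\H\aab)$. By Theorem~\ref{th:n:essdisc} and \eqref{th:nspec:2}, for every $n\ge k$ the set $\sigma(\H_{\al^n,\be}^n)\cap[s_k-\delta,s_k+\delta]$ is the single simple eigenvalue $\{s_k\}$, so the spectral projection of $\H_{\al^n,\be}^n$ on this interval has rank one. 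Norm convergence of the projections along the subsequence $n_m$ then forces $E_{\H\aab}([s_k-\delta,s_k+\delta])$ to have rank one as well, yielding $\dim\ker(\H\aab-s_k\Id)=1$. The one place that requires care is the justification of norm-convergence of projections on each chosen interval---this is exactly the point where placing the endpoints in $\rho(\H\aab)$ together with Lemma~\ref{lemma:nrc} enters, and it is the main technical obstacle of the argument, though standard.
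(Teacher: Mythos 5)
Your proposal is correct and takes essentially the same route as the paper: the paper simply states that Theorem~\ref{th:ess}, Theorem~\ref{th:n:essdisc} and Lemma~\ref{lemma:nrc} ``immediately imply'' Theorem~\ref{th:disc}, and your argument is precisely the standard way to make that implication explicit, namely by converting the norm resolvent convergence of $\H_{\al^{n_m},\be}^{n_m}$ into norm convergence (hence rank preservation) of Riesz spectral projections over intervals whose endpoints lie in $\rho(\H\aab)$. The only detail worth recording in a final write-up is the routine verification that the resolvent convergence at the single point $\mu$ from Lemma~\ref{lemma:nrc} propagates to uniform convergence on the chosen contours, which you correctly identify as the standard technical step.
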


\begin{proof}[Proof of Lemma~\ref{lemma:nrc}] 
To simplify the presentation we assume that $n_m=m$ for  $m\in\N$.
Now 
let $f,g\in \L\interval$, and consider $u=(\H\aab - \mu \Id)^{-1}f$ and $v=(\H_{\al^m,\be}^m -\mu \Id)^{-1} g$.
Denote $$T^m =(\H\aab-\mu \Id)^{-1}-(\H_{\al^m,\be}^m -\mu \Id)^{-1}.$$
In the same way as in the proof of Theorem~\ref{th:ess} one computes
\begin{equation}\label{intsqq2}
\begin{split}
&(T^m f,g)_{\L\interval}=(u,\H_{\al^m,\be}^mv)_{\L\interval}-(\H\aab u,v)_{\L\interval}\\
& \qquad=\suml_{k\in\Z} u(y_{k})\overline{\bigl(v'(y_k+0)-v'(y_k-0)\bigr)} - \suml_{k\in\Z} \bigl(u'(y_k+0)-u'(y_k-0)\bigr)\overline{v(y_k)}\\
& \qquad\,\,\,+\suml_{k\in\Z} u(x_{k})\overline{\bigl(v'(x_k+0)-v'(x_k-0)\bigr)} - \suml_{k\in\Z} \bigl(u'(x_k+0)-u'(x_k-0)\bigr)\overline{v(x_k)}.
\end{split}
\end{equation}
Using the boundary conditions for $u\in\dom(\H\aab)$ and $v\in\dom(\H_{\al^m,\be}^m)$ we obtain
\begin{equation}\label{intsqq2+}
 (T^m f,g)_{\L\interval}=\underbrace{\suml_{k\in\N} (\alpha_k^m-\alpha_k) u(y_{k})\overline{v(y_k)}}_{I_1^m\coloneqq } +
 \underbrace{\suml_{\vert k\vert\geq m} u(x_{k})\overline{\bigl(v'(x_k+0)-v'(x_k-0)\bigr)}}_{I_2^m\coloneqq }.
\end{equation}
Indeed, the first two sums on the right hand side in \eqref{intsqq2} reduce to the first sum in \eqref{intsqq2+} since $u$ and $v$ satisfy the $\delta$-jump conditions at 
$y_k$, $k\in\Z$, of the strength $\al_k$ and $\al_k^n$, respectively (recall that $\al_k^n=\al_k=\mathcal{F}^D_{d_k}(s_k)$ as $k\in\Z\setminus\N$).
Also,
since $u$ and $v$ satisfy the same $\delta$-jump conditions at $x_k$, $k=-m+1,\dots,m-1$, and $v(x_k)=0$ for all $k\in\Z\setminus\{-m+1,\dots,m-1\}$
the last two sums on the right hand side in \eqref{intsqq2} reduce to the second sum in \eqref{intsqq2+}.

First we  estimate the term $I_1^m$. Fix $\eps>0$.
It is clear that
\begin{gather}\label{Tm}
|I_1^m|\le
\left(\suml_{k\in\N}| \al^m_k- \al_k|\cdot|u(y_k)|^2\right)^{1/2}
\left(\suml_{k\in\N}| \al^m_k- \al_k|\cdot|v(y_k)|^2\right)^{1/2}.
\end{gather}
Let $(c_k)_{k\in\N}$ be the sequence from \eqref{a-a}. Since $c_k\to 0$ as $k\to\infty$, there exists $K(\eps)\in\ \N$ such that 
\begin{gather}\label{eps1}
c_k\leq \eps\text{ as }k>K(\eps).
\end{gather}
Moreover, due to \eqref{alpha:lim}, there exists $M(\eps)$ such that
\begin{gather}\label{eps2}
\text{for }1\leq k\leq K(\eps):\ |\al^m_k - \al _k|<\eps d_k\text{ as }m\ge M(\eps).
\end{gather}
Combining \eqref{a-a}, \eqref{enclo}, \eqref{eps1}--\eqref{eps2}  we
obtain for $m\ge M(\eps)$:
\begin{multline}\label{sum-est}
\suml_{k\in\N}| \al^m_k- \al_k|\cdot|u(y_k)|^2\\\leq
\suml_{k=1}^{K(\eps)}| \al^m_k- \al_k| \cdot|u(y_k)|^2+
\suml_{k=K(\eps)+1}^\infty (\al_k^+-\al_k^-)\cdot|u(y_k)|^2 \leq 
\eps\suml_{k\in \N}d_k |u(y_k)|^2.
\end{multline}
In what follows, we denote by $\u_k$ and $\vv_k$, $k\in\Z$, the restrictions of the functions $\u$ and $\vv$ to the 
interval $\mathcal{I}_k$.
Recall that $\widehat{C}$ is a positive constant for which \eqref{alpha-inf} holds. Without loss of generality
we may assume that $\widehat{C}\ge 1$.
One has the following standard Sobolev inequality (see, e.g. \cite[Lemma 1.3.8]{BK13}):  
\begin{gather}
\label{Sobolev}
\forall \u\in \W^{1,2}(a,b):\quad |u(a)|\leq L\|\u'\|_{\L(a,b)}^2 + 2L^{-1}\|\u\|^2_{\L(a,b)},
\end{gather}
where $a<b<\infty$,  $L\in (0,b-a]$.
Applying   \eqref{Sobolev} with $(a,b)=(y_k,x_k)\subset\mathcal{I}_k$, $L={d_k\over 2 \widehat{C} }$, we get 
\begin{gather*}
|u(y_k)|^2\leq {d_k\over  2\widehat{C}}\|\u_k'\|^2_{\L(y_k,x_k)}+{4 \widehat{C}\over d_k}\|\u_k\|^2_{\L(y_k,x_k)}\leq {d_k\over  2\widehat{C}}\|\u_k'\|^2_{\L(\I_k)}+{4 \widehat{C}\over d_k}\|\u_k\|^2_{\L(\I_k)}.
\end{gather*}
It is straight forward to check that the above estimate is equivalent to the estimate
\begin{gather*}
d_k|u(y_k)|^2\leq 
\left(1+{d_k \al^m_k\over 2\widehat{C}}\right)^{-1}\left({d_k^2\over 2\widehat{C}}\hk_{\I_k, \al^m_k}[\u_k,\u_k]
+
4\widehat C \|\u_k\|^2_{\L(\I_k)}\right).
\end{gather*}
Therefore, since $\al^m_k\in [-\widehat{C}d_k^{-1},0)$ by \eqref{alpha-inf}, $d_k<\ell_+-\ell_-$, and 
\begin{gather*}
%\label{hh:ineq}
\hk_{\I_k, \al^m_k}[\u_k,\u_k]\leq \hk_{\I_k, \al^m_k,\be_{k-1},\be_k}[\u_k,\u_k]
\end{gather*} 
(this inequality holds since $\be_k\ge 0$), we obtain 
\begin{gather*}
d_k|u(y_k)|^2\leq 
C_1 \hk_{\I_k, \al^m_k,\be_{k-1},\be_k}[\u_k,\u_k] + C_2\|\u_k \|^2_{\L(\I_k)},
\end{gather*}
where $C_1={(\ell_+-\ell_-)^2\over\widehat{C}}$, $C_2=8\widehat{C}$.
From the above estimate and \eqref{sum-est} we conclude
\begin{align}\notag
\suml_{k\in\N}| \al^m_k- \al_k|\cdot |u(y_k)|^2&\leq
C_1\eps \h\aab[u,u] + C_2\eps\|u \|^2_{\L\interval}\\\label{sum-est-u}
&\leq C\eps\|f\|^2_{\L\interval},\ m\ge M(\eps).
\end{align}
Similarly, 
\begin{gather}\label{sum-est-v}
\suml_{k\in\N}| \al^m_k- \al_k|\cdot|v(y_k)|^2\leq 
C\eps\|g\|^2_{\L\interval},\ m\ge M(\eps).
\end{gather}
Combining \eqref{Tm}, \eqref{sum-est-u}, \eqref{sum-est-v}  we conclude that 
\begin{gather}\label{I1}
\forall\eps>0\quad \exists M(\eps)\in\N:\quad
|I_1^m|\leq C\eps\|f\|_{\L\interval}\|g\|_{\L\interval}\text{ as }m\ge M(\eps).
\end{gather}

It remains to estimate the term $I_2^m$. Recall that $D_k=\min\{d_k,d_{k+1}\},\,k\in\Z.$ 
One has
\begin{gather}
\label{I2m}
|I_2^m|\leq
\left(\suml_{ |k|\geq m}D_k^{-3}|u(x_k)|^2\right)^{1/2}
\left(\suml_{  |k|\geq m }D_k^{3}|v'(x_k+0)-v'(x_k-0)|^2\right)^{1/2}.
\end{gather}
Repeating verbatim the arguments of the proofs of \eqref{gaga1++} and \eqref{gamma0est:final}, we obtain
\begin{gather}\label{u-est}
\suml_{  |k|\geq m}D_k^{-3}|u(x_k)|^2\leq C_m\|f\|^2_{\L\interval},\text{ where }C_m\to 0\text{ for }m\to\infty,
\end{gather}
and 
\begin{gather}\label{v-est}
\suml_{k\in\Z}  
D_k^{3}|v'(x_k+0)-v'(x_k-0)|^2\leq C\|g\|^2_{\L\interval}
\end{gather}
(note that the function $v$ in the estimate \eqref{gamma0est:final} vanishes at  $x_k$ for \emph{all} $k\in\Z$, however this property
is not utilized for the proof of \eqref{gamma0est:final}).
Combining \eqref{u-est} and \eqref{v-est} we arrive at
\begin{gather}\label{I2}
|I_2^m|^2\leq C\,C_m\,\|f\|^2_{\L\interval}\|g\|^2_{\L\interval},\text{ where }C_m\to 0\text{ for }m\to\infty.
\end{gather}

The statement of the lemma follows immediately from   \eqref{intsqq2+}, \eqref{I1}, 
\eqref{I2}.
\end{proof}

\appendix

\section{}

For the convenience of the reader we briefly discuss in this appendix infinite orthogonal sums of densely defined 
closed uniformly semibounded forms in Hilbert spaces and the associated self-adjoint operators.

\subsection{Direct sums of Hilbert spaces}
\label{A1}

Let $(\HSk_k)_{k\in \Z}$  be a family of Hilbert spaces and let 
$\prod_{k\in\Z} \HSk_k$ be the Cartesian product of $\HSk_k, k\in\Z$.
The elements of $\prod_{k\in\Z} \HSk_k$ will be denoted by roman letters, while bold letter are used for their components, e.g., 
$u=(\u_k)_{k\in\Z}$, $\u_k\in\HSk_k$. 
The direct sum of  $\HSk_k$, 
$$\HS=\bigoplus_{k\in\Z}\HSk_k,$$
consists of all $u=(\u_k)_{k\in\Z}\in \prod\limits_{k\in\Z} \HSk_k$
such that
\begin{gather}
\label{sum-norm}
\|u\|_{\HS}^2=\suml_{k\in \Z} \|\u_k\|^2_{\HSk_k}<\infty.
\end{gather}
Due to \eqref{sum-norm} one can introduce a scalar product 
on $\HS$ by
\begin{gather}\label{sp}
(u,v)_{\HS}=\suml_{k\in \Z}(u_k,v_k)_{\HSk_k}.
\end{gather}
It then turns out that $V$ is a Hilbert space; cf. \cite[Chapter~1.6,\, Theorem~6.2]{Con85}.

\begin{proposition} 
\label{aprop1}
The space $\HS$ equipped with the scalar product \eqref{sp} is a Hilbert space.
\end{proposition}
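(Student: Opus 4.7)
The plan is to verify first that \eqref{sp} defines an inner product on $\HS$, and then to establish completeness by the standard ``componentwise-limit plus Fatou-type'' argument.

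To check that the series in \eqref{sp} is well defined for every $u,v\in\HS$, I would apply the Cauchy--Schwarz inequality in each component,
\begin{gather*}
|(\u_k,\vv_k)_{\HSk_k}|\le \|\u_k\|_{\HSk_k}\|\vv_k\|_{\HSk_k}\le \tfrac{1}{2}\bigl(\|\u_k\|^2_{\HSk_k}+\|\vv_k\|^2_{\HSk_k}\bigr),
\end{gather*}
and sum in $k\in\Z$: the right-hand side is finite by \eqref{sum-norm}, so the defining series converges absolutely. Sesquilinearity, conjugate symmetry and positive definiteness of $(\cdot,\cdot)_\HS$ are then inherited termwise from the factor inner products $(\cdot,\cdot)_{\HSk_k}$, and the induced norm coincides with the one in \eqref{sum-norm}.

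For completeness, I would take a Cauchy sequence $(u^n)_{n\in\N}\subset\HS$ with $u^n=(\u_k^n)_{k\in\Z}$. The termwise estimate $\|\u_k^n-\u_k^m\|_{\HSk_k}\le\|u^n-u^m\|_{\HS}$ shows that for every fixed $k\in\Z$ the sequence $(\u_k^n)_{n\in\N}$ is Cauchy in the complete Hilbert space $\HSk_k$, hence has a limit $\u_k\in\HSk_k$. Setting $u=(\u_k)_{k\in\Z}$, the task is to show $u\in\HS$ and $u^n\to u$ in $\HS$. For any $N\in\N$ and any $n,m\in\N$,
\begin{gather*}
\suml_{|k|\le N}\|\u_k^n-\u_k^m\|^2_{\HSk_k}\le\|u^n-u^m\|^2_\HS;
\end{gather*}
letting $m\to\infty$ with $n,N$ fixed (using continuity of each $\|\cdot\|_{\HSk_k}$) gives
\begin{gather*}
\suml_{|k|\le N}\|\u_k^n-\u_k\|^2_{\HSk_k}\le\limsup_{m\to\infty}\|u^n-u^m\|^2_\HS,
\end{gather*}
and the right-hand side is independent of $N$, so letting $N\to\infty$ yields the same bound for the full series. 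Since $(u^n)$ is Cauchy, the right-hand side can be made arbitrarily small by choosing $n$ large, so $u^n-u\in\HS$ with $\|u^n-u\|_\HS\to 0$; hence also $u=u^n-(u^n-u)\in\HS$.

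The only mildly delicate point is the exchange of the finite partial sum over $k$ with the $m\to\infty$ limit, which is justified at each fixed $N$ by continuity of finitely many norms, after which the passage $N\to\infty$ is monotone and automatic. Apart from that, the argument is entirely routine and matches the standard proof in \cite[Chapter~1.6,~Theorem~6.2]{Con85}, so no substantive obstacle is expected.
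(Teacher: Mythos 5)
Your proof is correct. The paper does not actually prove this proposition itself but simply defers to \cite[Chapter~1.6, Theorem~6.2]{Con85}; your argument --- absolute convergence of the defining series via Cauchy--Schwarz, followed by the standard componentwise-limit completeness argument with the $\limsup$ bound uniform in the truncation parameter $N$ --- is precisely the textbook proof that citation points to, so you have simply filled in the details the paper omits.
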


\subsection{Direct sums of non-negative forms and associated operators}
\label{A2}

Let $(\HSk_k)_{k\in \Z}$  be a family of Hilbert spaces and let $(\hk_k)_{k\in \Z}$ 
be a family of closed non-negative densely defined sesquilinear forms (for each $k\in\Z$ the form $\hk_k$ acts in the space $\HSk_k$).
By the first representation theorem \cite[$\S$-VI. Theorem~2.1]{K66} there exists a unique 
self-adjoint operator $\Hk_k$ being associated with the form $\hk_k$, i.e. $\dom(\Hk_k)\subset\dom(\hk_k)$ and 
$$\hk_k[\u,\vv ]=(\Hk_k \u,\vv )_{\HSk_k},\qquad \u\in\dom(\Hk_k),\, \vv \in\dom(\hk_k).$$

In the space $\HS$ we define the form $\h$ by
\begin{equation*}
\begin{split}
\h[u,v]&=\suml_{k\in \Z}\hk_k[\u_k,\vv _k],\\
\dom(\h)&=\left\{u=(\u_k)_{k\in\Z}\in\HS: \u_k\in\dom(\hk_k),\ \suml_{k\in \Z}\hk_k[\u_k,\u_k]<\infty\right\}.
\end{split}
\end{equation*}
The form $\h$ is refered to as the direct sum of the forms $\hk_k$; we also use the notation
$$\h=\bigoplus_{k\in\Z}\hk_k.$$

\begin{proposition}\label{aprop2}
The form $\h$ is non-negative, densely defined and closed in $V$.
\end{proposition}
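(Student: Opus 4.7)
Non-negativity is immediate: for $u \in \dom(\h)$ we have $\h[u,u] = \suml_{k\in\Z} \hk_k[\u_k,\u_k] \ge 0$ since each summand is non-negative by assumption. For density, I would fix $u = (\u_k)_{k\in\Z} \in V$ and $\eps > 0$, choose $N \in \N$ so that $\suml_{|k|>N}\|\u_k\|^2_{\HSk_k} < \eps^2$, and for each $|k|\le N$ use the density of $\dom(\hk_k)$ in $\HSk_k$ to pick $\vv_k \in \dom(\hk_k)$ with $\|\u_k - \vv_k\|^2_{\HSk_k} < \eps^2/(2N+1)$, setting $\vv_k = 0$ for $|k|>N$. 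The resulting $v = (\vv_k)_{k\in\Z}$ has finitely many nonzero components, all lying in the relevant form domains, so $v\in\dom(\h)$ and $\|u - v\|_V^2 < 2\eps^2$.

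The substantive part is closedness. I would endow $\dom(\h)$ with the form norm $\|u\|_\h^2 = \h[u,u] + \|u\|_V^2$ and let $(u^n)_{n\in\N}$ be a Cauchy sequence with respect to $\|\cdot\|_\h$. Since $\|\cdot\|_V \le \|\cdot\|_\h$, the sequence is also Cauchy in $V$ and converges to some $u = (\u_k)_{k\in\Z} \in V$ by Proposition~\ref{aprop1}. Moreover, for each fixed $k$ the inequality
\begin{equation*}
 \|\u_k^n - \u_k^m\|^2_{\HSk_k} + \hk_k[\u_k^n - \u_k^m,\, \u_k^n - \u_k^m] \le \|u^n - u^m\|_\h^2
\end{equation*}
shows that $(\u_k^n)_n$ is Cauchy in the form norm of $\hk_k$. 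Closedness of $\hk_k$ then yields $\u_k \in \dom(\hk_k)$ together with $\hk_k[\u_k^n - \u_k,\, \u_k^n - \u_k] \to 0$ as $n\to\infty$.

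It remains to check $u \in \dom(\h)$ and $\|u^n - u\|_\h \to 0$. The Cauchy sequence is bounded, so $M \coloneqq \sup_n \|u^n\|_\h^2 < \infty$. For any finite $F \subset \Z$ one has $\suml_{k\in F}\hk_k[\u_k^n,\u_k^n] \le \h[u^n,u^n] \le M$, and letting $n\to\infty$ in this finite sum of convergent real terms yields $\suml_{k\in F}\hk_k[\u_k,\u_k] \le M$. Taking the supremum over finite $F$ shows $u\in\dom(\h)$ with $\h[u,u]\le M$. The convergence $\|u^n - u\|_\h \to 0$ follows by exactly the same cutoff argument: given $\eps>0$, choose $N$ with $\|u^n-u^m\|_\h^2 < \eps$ for $n,m\ge N$; for any finite $F\subset\Z$,
\begin{equation*}
 \suml_{k\in F}\Bigl(\|\u_k^n-\u_k^m\|^2_{\HSk_k} + \hk_k[\u_k^n-\u_k^m,\,\u_k^n-\u_k^m]\Bigr) < \eps,
\end{equation*}
and passing to the limit $m\to\infty$ using the component-wise form convergence just established gives the same bound with $\u_k^m$ replaced by $\u_k$. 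Supremum over finite $F$ finally yields $\|u^n - u\|_\h^2 \le \eps$ for all $n\ge N$.

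The only delicate point is the interchange of the infinite sum over $k$ with the limit $n\to\infty$, both when proving $u\in\dom(\h)$ and when proving form-norm convergence. The standard way around it, as above, is to cut the sum off at an arbitrary finite $F \subset \Z$ (where closedness of each individual $\hk_k$ provides component-wise form convergence and hence permits exchanging a finite sum with the limit), obtain a bound uniform in $F$, and then let $F$ exhaust $\Z$.
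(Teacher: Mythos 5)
Your proof is correct. It differs from the paper's in both of its nontrivial parts, though the underlying substance is the same. For density, you construct explicit approximants by truncating to finitely many components and approximating each one in its own space; the paper instead takes $v\in V$ orthogonal to all of $\dom(\h)$, tests against elements supported in a single component, and concludes $v=0$. For closedness, the paper's route is to observe that $\dom(\h)$ equipped with the form scalar product \emph{is} the Hilbert-space direct sum $\bigoplus_{k\in\Z}\bigl(\dom(\hk_k),\|\cdot\|_{\hk_k}\bigr)$, and then simply to invoke Proposition~\ref{aprop1}; you instead carry out the completeness verification by hand, via componentwise form-norm convergence followed by a finite-cutoff argument to pass the limit through the infinite sum. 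Your version makes the ``delicate point'' (interchanging $\sum_k$ with $\lim_n$) explicit and self-contained, which is pedagogically valuable; the paper's version is shorter because it delegates exactly that interchange to the already-established completeness of direct sums of Hilbert spaces. Both arguments are complete and correct.
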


\begin{proof}
It is clear that the form $\h$ is non-negative.
In order to prove that $\h$ is densely defined in $V$ fix 
$v=(\vv _k)_{k\in\Z}\in\HS$ and assume that
\begin{gather}
\label{ort}
(u,v)_\HS=0,\qquad u\in \dom(\h).
\end{gather}
For arbitrary $l\in \Z$ and $\w\in\dom(\hk_l)$ we consider
$$
w^l=(\w^l_k)_{k\in\Z}=\begin{cases} \w & \text{if } k=l, \\ 0 & \text{if } k\not=l.\end{cases}
$$
Then $w^l\in\dom(\h)$ and \eqref{ort} holds with $u=w^l$, which implies $(\w,\vv _l)_{\HSk_l}=0$. As the
form $\hk_l$ is densely defined in $\HSk_l$ it follows that $\vv _l=0$. Since $l\in\Z$ is arbitrary we conclude $v=(\vv _l)_{l\in\Z}=0$, which implies that  
$\h$ is densely defined in $\HS$.

Finally, we verify that $\h$ is closed. Let us equip $\dom(\hk_k)$ with the scalar product
\begin{gather}
\label{sp+}
(\u,\vv )_{\dom(\hk_k)}=\hk_k[\u,\vv ]+(u,v)_{\HSk_k},\qquad \u,\vv \in\dom(\hk_k).
\end{gather}
Since $\hk_k$ is closed by assumption $\dom(\hk_k)$ equipped with the scalar product \eqref{sp+} is a Hilbert space. 
On $\dom(\h)$ we consider the scalar product
\begin{gather}
\label{sp++}
(u,v)_{\dom(\h)}:
=\h[u,v]+(u,v)_{\HS}=\suml_{k\in\Z}(\u_k,\vv _k)_{\dom(\hk_k)}
\end{gather}
for $ u=(\u_k)_{k\in\Z}, v=(\vv _k)_{k\in\Z}\in\dom(\h)$.
By Proposition~\ref{aprop1} $\dom(\h)$ together with the scalar product \eqref{sp++} is also a Hilbert space, that is,
the form $\h$ is closed.
\end{proof}

The proposition above implies that there exists a unique self-adjoint and non-negative operator $\H$ 
associated to the form $\h$. We refer to $\H$ as a direct sum of  $\Hk_k$ and use the notation
$$\H=\bigoplus_{k\in\Z}\Hk_k.$$
As a consequence one obtains the following statement.

\begin{proposition}\label{aprop3}
The non-negative self-adjoint operator $\H$ associated to $\h$ in $V$ is given by 
\begin{equation*}
 \begin{split}
  \H u&=(\Hk_k \u_k)_{k\in \Z},\\
  \dom(\H)&=\left\{u=(\u_k)_{k\in\Z}\in V: \u_k\in\dom(\Hk_k),\,\suml_{k\in\Z}\|\Hk_k \u_k\|_{\HSk_k}^2<\infty \right\}.
 \end{split}
\end{equation*}
\end{proposition}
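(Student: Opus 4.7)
\medskip

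\noindent\textbf{Proof plan for Proposition~\ref{aprop3}.}
Let $\widetilde{\H}$ denote the operator with the action and domain described in the statement; the plan is to verify that $\widetilde{\H}$ coincides with the unique operator $\H$ associated to $\h$ via the first representation theorem, by checking both inclusions separately. The key point to keep in mind is the following characterization (a direct consequence of \cite[$\S$-VI. Theorem~2.1]{K66}): $u\in\dom(\H)$ precisely when $u\in\dom(\h)$ and there exists $g\in V$ such that $\h[u,v]=(g,v)_{V}$ for all $v\in\dom(\h)$, in which case $\H u=g$.

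First I would show $\dom(\widetilde\H)\subset\dom(\H)$ with $\H u=\widetilde\H u$. Fix $u=(\u_k)_{k\in\Z}$ with $\u_k\in\dom(\Hk_k)$ and $\suml_{k\in\Z}\|\Hk_k\u_k\|^2_{\HSk_k}<\infty$. Since each $\hk_k$ is non-negative, Cauchy--Schwarz applied to the quadratic form gives $\hk_k[\u_k,\u_k]=(\Hk_k\u_k,\u_k)_{\HSk_k}\leq \|\Hk_k\u_k\|_{\HSk_k}\|\u_k\|_{\HSk_k}$, and summing together with \eqref{sum-norm} shows $u\in\dom(\h)$. Setting $g=(\Hk_k\u_k)_{k\in\Z}\in V$, for any $v=(\vv_k)\in\dom(\h)$ (so $\vv_k\in\dom(\hk_k)$) the identity $\hk_k[\u_k,\vv_k]=(\Hk_k\u_k,\vv_k)_{\HSk_k}$ from the first representation theorem applied to each $\Hk_k$, together with the definitions \eqref{sp} and \eqref{sp++}, yields
\begin{equation*}
\h[u,v]=\suml_{k\in\Z}\hk_k[\u_k,\vv_k]=\suml_{k\in\Z}(\Hk_k\u_k,\vv_k)_{\HSk_k}=(g,v)_{V}.
\end{equation*}
By the characterization above, $u\in\dom(\H)$ and $\H u=g=(\Hk_k\u_k)_{k\in\Z}$.

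Next I would prove $\dom(\H)\subset\dom(\widetilde\H)$. Let $u\in\dom(\H)$ and set $g=\H u\in V$; in particular $u\in\dom(\h)$, so $\u_k\in\dom(\hk_k)$ for every $k\in\Z$. For each fixed $l\in\Z$ and each $\w\in\dom(\hk_l)$ construct the test element $w^l=(\w^l_k)_{k\in\Z}\in\dom(\h)$ by setting $\w^l_l=\w$ and $\w^l_k=0$ for $k\neq l$, exactly as in the proof of Proposition~\ref{aprop2}. Then $\h[u,w^l]=\hk_l[\u_l,\w]$ and $(g,w^l)_{V}=(\g_l,\w)_{\HSk_l}$, so the identity $\h[u,w^l]=(g,w^l)_{V}$ reduces to
\begin{equation*}
\hk_l[\u_l,\w]=(\g_l,\w)_{\HSk_l}\qquad\text{for all }\w\in\dom(\hk_l).
\end{equation*}
This is precisely the defining relation characterizing $\u_l\in\dom(\Hk_l)$ with $\Hk_l\u_l=\g_l$. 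Consequently $\suml_{l\in\Z}\|\Hk_l\u_l\|^2_{\HSk_l}=\suml_{l\in\Z}\|\g_l\|^2_{\HSk_l}=\|g\|^2_V<\infty$, which places $u$ in $\dom(\widetilde\H)$ and shows $\widetilde\H u=g=\H u$.

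The argument is essentially bookkeeping; there is no serious obstacle. The only point that requires attention is the use of the coordinate test vectors $w^l$ to reverse the direct-sum structure, and the verification that $g=(\Hk_k\u_k)_{k\in\Z}$ truly lies in $V$, both of which are ensured by the summability conditions built into the definitions of $\h$ and $V$.
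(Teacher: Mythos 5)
Your argument is correct and is exactly the standard two-inclusion verification via the first representation theorem that the authors implicitly invoke (the paper states Proposition~\ref{aprop3} without proof, as ``a consequence'' of Proposition~\ref{aprop2}, and your use of coordinate test vectors mirrors the technique used elsewhere in the paper, e.g.\ in the proof of Theorem~\ref{th:A:spec}(ii) and the remark preceding Proposition~\ref{prop:HD:1}). Both directions are sound: the Cauchy--Schwarz estimates justify $u\in\dom(\h)$ and the convergence of $\suml_{k}(\Hk_k\u_k,\vv_k)$, and testing against elements supported in a single coordinate correctly identifies the components of $\H u$.
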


\subsection{Direct sums of uniformly semibounded forms and associated operators}
\label{A3}

Let again $(\HSk_k)_{k\in \Z}$  be a family of Hilbert spaces and 
let $(\hk_k)_{k\in \Z}$  be a family of densely defined semibounded closed forms. 
We assume, in addition, that there is a uniform lower bound 
\begin{gather*}%\label{c-inf}
C_{\inf}=\inf_{k\in \Z}\,\,\, \inf_{\u\in\dom(\hk_k):\ \|\u\|_{\HSk_k}=1} \hk_k[\u,\u]>-\infty,
\end{gather*}
and consider the family of densely defined non-negative closed forms
$$\widetilde\hk_k[\u,\vv ]=\hk_k[\u,\vv ]-C_{\inf}(\u,\vv )_{\HSk_k},\quad \dom(\widetilde\hk_k)=\dom(\hk_k).$$
By Proposition~\ref{aprop2} the form $$\widetilde\h=\bigoplus_{k\in \Z}\widetilde\hk_k$$ 
is non-negative, densely defined and closed in $V=(\HSk_k)_{k\in \Z}$.

Now, we  define the direct sum $\h=\bigoplus_{k\in\Z} \hk_k$ in $V$ by
$$\h[u,v]=\widetilde \h[u,v]+C_{\inf}(u,v)_{\HS},\quad \dom(\h)=\dom(\widetilde\h).$$
It is clear that the form $\h$ is densely defined, semibounded and closed in $V$; moreover 
$\dom(\h)$ consists of all $u=(\u_k)_{k\in\Z}\in \HS$ such that
\begin{gather}\label{domsi2}
\u_k\in\dom(\hk_k)\quad \text{and}\quad \sum_{k\in\Z}|\hk_k[\u_k,\u_k]|<\infty.
\end{gather}
As in the non-negative case the self-adjoint operator $\H$ associated to $\h$ is refered to as the direct sum of the operators $\Hk_k$.
Then one obtains the following variant of Proposition~\ref{aprop3}.

\begin{proposition}\label{aprop3+}
The semibounded self-adjoint operator $\H$ associated to $\h$ in $V$ is given by 
\begin{equation*}
 \begin{split}
  \H u&=(\Hk_k \u_k)_{k\in \Z},\\
  \dom(\H)&=\left\{u=(\u_k)_{k\in\Z}\in V: \u_k\in\dom(\Hk_k),\,\suml_{k\in\Z}\|\Hk_k \u_k\|_{\HSk_k}^2<\infty \right\}.
 \end{split}
\end{equation*}
\end{proposition}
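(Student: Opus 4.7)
The plan is to reduce the semibounded case to the non-negative case treated in Proposition~\ref{aprop3}, and then verify Proposition~\ref{aprop3} by a direct two-sided inclusion argument. Since $\h[u,v]=\widetilde\h[u,v]+C_{\inf}(u,v)_{\HS}$ with $\dom(\h)=\dom(\widetilde\h)$, the operator associated with $\h$ via the first representation theorem is $\H=\widetilde\H+C_{\inf}\Id$, where $\widetilde\H$ is associated with $\widetilde\h$. Since $\Hk_k=\widetilde\Hk_k+C_{\inf}\Id_{\HSk_k}$ in each factor, both the action and the domain description in the statement transfer verbatim under this shift, so it suffices to prove Proposition~\ref{aprop3}. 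I therefore assume from now on that every $\hk_k$ is non-negative.

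For the non-negative case, let $\H'$ be the operator defined by the formulas in the statement and let $\H$ be the operator associated with $\h$ by the first representation theorem. I first establish the easy inclusion $\H'\subset\H$. Given $u=(\u_k)_{k\in\Z}\in\dom(\H')$, the elementary bound
$$\hk_k[\u_k,\u_k]=(\Hk_k\u_k,\u_k)_{\HSk_k}\leq \|\Hk_k\u_k\|_{\HSk_k}\|\u_k\|_{\HSk_k},$$
combined with Cauchy--Schwarz in $\ell^2(\Z)$, yields $\suml_{k\in\Z}\hk_k[\u_k,\u_k]<\infty$, so $u\in\dom(\h)$. The same Cauchy--Schwarz estimate $\suml_{k\in\Z}|(\Hk_k\u_k,\vv_k)_{\HSk_k}|\leq \|\H' u\|_{\HS}\|v\|_{\HS}$ ensures that for every $v=(\vv_k)_{k\in\Z}\in\dom(\h)$,
$$\h[u,v]=\suml_{k\in\Z}\hk_k[\u_k,\vv_k]=\suml_{k\in\Z}(\Hk_k\u_k,\vv_k)_{\HSk_k}=(\H' u,v)_{\HS},$$
and the first representation theorem then forces $u\in\dom(\H)$ and $\H u=\H' u$.

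For the reverse inclusion $\H\subset\H'$, I would fix $u\in\dom(\H)$ (so in particular $\u_l\in\dom(\hk_l)$ for every $l$) and extract information on each coordinate via one-coordinate test vectors, in the same spirit as the density argument in Proposition~\ref{aprop2}. For each $l\in\Z$ and every $\vv\in\dom(\hk_l)$, let $v^l\in\HS$ denote the element whose $l$-th coordinate is $\vv$ and whose remaining coordinates vanish; then $v^l\in\dom(\h)$ and
$$((\H u)_l,\vv)_{\HSk_l}=(\H u,v^l)_{\HS}=\h[u,v^l]=\hk_l[\u_l,\vv].$$
Since this holds for all $\vv\in\dom(\hk_l)$, the first representation theorem applied in the single Hilbert space $\HSk_l$ yields $\u_l\in\dom(\Hk_l)$ and $\Hk_l\u_l=(\H u)_l$. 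Summing over $l$ gives $\suml_{l\in\Z}\|\Hk_l\u_l\|_{\HSk_l}^2=\|\H u\|_{\HS}^2<\infty$, so $u\in\dom(\H')$ and $\H'u=\H u$, completing the identification $\H=\H'$. There is no genuine obstacle here; the only care needed is to justify the interchange of sums and inner products by the Cauchy--Schwarz bound above, and to invoke the uniqueness in the first representation theorem both in the ambient space $\HS$ and in each factor $\HSk_l$.
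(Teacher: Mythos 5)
Your proof is correct and follows the same route the paper takes implicitly: Appendix~A.3 is itself organized as a reduction of the semibounded case to the non-negative case via the shift by $C_{\inf}$, and the paper then simply asserts Propositions~\ref{aprop3} and \ref{aprop3+} without proof, so your two-sided inclusion argument (using the characterization of the associated operator in the first representation theorem, both in $\HS$ and in each factor $\HSk_l$) supplies exactly the verification the authors leave to the reader. The only point worth stating a little more carefully is that the two domain conditions $\suml_{k}\|\Hk_k\u_k\|^2_{\HSk_k}<\infty$ and $\suml_{k}\|\widetilde\Hk_k\u_k\|^2_{\HSk_k}<\infty$ transfer not \emph{verbatim} but up to the triangle inequality, using $\Hk_k\u_k=\widetilde\Hk_k\u_k+C_{\inf}\u_k$ and $\suml_k\|\u_k\|^2_{\HSk_k}=\|u\|^2_{\HS}<\infty$.
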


Let us now assume that the spectrum of each semibounded self-adjoint operator $\Hk_k$ is discrete; 
we denote the corresponding eigenvalues (in nondecreasing order with multiplicities taken into accout) 
by $s_{jk}$, $j\in\N$. Furthermore, we introduce the sequence $S=(s_{jk})_{j\in\N,k\in\Z}$.
The next goal is to describe the spectrum of the operator $\H$.

\begin{theorem}
\label{th:A:spec}
Assume that the spectra of all $\Hk_k$ is discrete and let $S=(s_{jk})_{j\in\N,k\in\Z}$ be the set of all eigenvalues. Then the following assertions hold
for the spectrum of the semibounded self-adjoint operator $\H$ in Proposition~\ref{aprop3+}.
\begin{itemize}

\item[{\rm (i)}] $\lambda$ is an eigenvalue of $\H$ if and only if $\lambda\in\sigma(\Hk_k)$ for some $k\in\Z$. More precisely, one has 
\begin{gather}\label{kernis}
\mathrm{ker}(\H-\lambda\Id)=\bigoplus_{k\in\Z}\mathrm{ker}(\H_k-\lambda\Id)
\end{gather}
and, in particular,
\begin{gather}\label{kermult}
\dim\bigl(\mathrm{ker}(\H-\lambda\Id)\bigr)=
\#\left\{(j,k)\in\N\times\Z:\ s_{jk}=\lambda\right\};
\end{gather}

\item[{\rm (ii)}] $\sigma(\H)=\overline{S}$;

\item[{\rm (iii)}] $\sigma_\ess(\H)=\left\{\text{\rm accumulation points of }S\right\}$.

\end{itemize}
\end{theorem}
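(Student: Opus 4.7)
The plan is to prove (i) first by inspecting the description of $\H$ in Proposition~\ref{aprop3+} componentwise, then derive (ii) from (i) together with an explicit direct-sum resolvent construction, and finally combine (i)--(ii) with the standard splitting $\sigma(\H)=\sigma_\ess(\H)\cup\sigma_\disc(\H)$, where $\sigma_\disc(\H)$ denotes isolated eigenvalues of finite multiplicity, to establish (iii). No deep tool is required; the whole argument is essentially bookkeeping for direct sums.

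For (i), I would argue directly: for $u=(\u_k)_{k\in\Z}\in\dom(\H)$, the eigenvalue equation $\H u=\lambda u$ reduces componentwise to $\Hk_k\u_k=\lambda\u_k$, so every nonzero $\u_k$ lies in $\ker(\Hk_k-\lambda\Id)$. Conversely, for any $k\in\Z$ and any $\w\in\ker(\Hk_k-\lambda\Id)$, the vector with $\w$ in the $k$-th slot and zeros elsewhere belongs to $\dom(\H)$ and is an eigenvector of $\H$ at $\lambda$. Assembling such vectors across all slots and closing in $V$ yields the orthogonal decomposition \eqref{kernis}; the identity \eqref{kermult} then follows since each summand $\ker(\Hk_k-\lambda\Id)$ is spanned by eigenfunctions corresponding to precisely those indices $j$ with $s_{jk}=\lambda$.

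For (ii), the inclusion $S\subset\sigma(\H)$ follows from (i), and closedness of the spectrum promotes this to $\overline{S}\subset\sigma(\H)$. For the reverse inclusion, I would suppose $\lambda\notin\overline{S}$ and choose $\delta>0$ with $|\lambda-s_{jk}|\ge\delta$ for all $j,k$. By the spectral theorem applied to each $\Hk_k$, the resolvent $R_k=(\Hk_k-\lambda\Id)^{-1}$ exists and satisfies $\|R_k\|\le 1/\delta$ uniformly in $k$, so the componentwise prescription $(Rf)_k=R_k\mathbf{f}_k$ defines a bounded operator on $V$ with $\|R\|\le 1/\delta$. The one point that needs verification is that $Rf\in\dom(\H)$ for every $f\in V$, which follows from the estimate $\|\Hk_k R_k\mathbf{f}_k\|_{\HSk_k}\le\|\mathbf{f}_k\|_{\HSk_k}+|\lambda|\,\|R_k\mathbf{f}_k\|_{\HSk_k}$ and the characterization in Proposition~\ref{aprop3+}. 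A short direct computation then identifies $R$ with $(\H-\lambda\Id)^{-1}$, so $\lambda\in\rho(\H)$. This resolvent construction is the only mildly delicate step and I regard it as the main (though modest) obstacle.

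For (iii), suppose first that $\lambda$ is an accumulation point of $S$. Either the set of indices with $s_{jk}=\lambda$ is infinite---in which case \eqref{kermult} makes $\lambda$ an eigenvalue of infinite multiplicity, hence in $\sigma_\ess(\H)$---or there are pairwise distinct elements of $S$ converging to $\lambda$, making $\lambda$ a non-isolated point of $\sigma(\H)=\overline{S}$ by (ii), again placing $\lambda$ in $\sigma_\ess(\H)$. Conversely, if $\lambda$ is not an accumulation point of $S$, then some $\eps$-neighbourhood of $\lambda$ meets $S$ at most at $\lambda$ itself and only through finitely many indices. By (ii), $\lambda\in\rho(\H)$ in case $\lambda\notin S$, while otherwise $\lambda$ is an isolated point of $\sigma(\H)$ whose multiplicity is finite by \eqref{kermult}; either way $\lambda\notin\sigma_\ess(\H)$, completing the proof.
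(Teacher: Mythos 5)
Your proposal is correct and follows the same overall route as the paper: part (i) by componentwise inspection of the eigenvalue equation, part (ii) by combining the inclusion $\overline{S}\subset\sigma(\H)$ from (i) with a componentwise construction of the resolvent under the uniform gap $\dist(\lambda,\sigma(\Hk_k))\geq\delta$, and part (iii) by the case distinction between infinite multiplicity and non-isolatedness together with the splitting $\sigma(\H)=\sigma_\ess(\H)\cup\sigma_\disc(\H)$. The only (harmless) deviation is in (ii): you verify $Rf\in\dom(\H)$ directly from the operator-domain characterization in Proposition~\ref{aprop3+} via the bound $\|\Hk_k R_k\mathbf{f}_k\|\leq(1+|\lambda|/\delta)\|\mathbf{f}_k\|$, whereas the paper first checks membership in the form domain $\dom(\h)$ and then invokes the first representation theorem; both arguments are valid and rest on the same componentwise resolvent estimate.
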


\begin{proof}
(i) Let $\lambda$ be an eigenvalue of $\H$ and let $u=(\u_k)_{k\in\Z}\in\HS$ be a corresponding eigenfunction. 
Then one has $\Hk_k \u_k=\lambda\u_k$ for all $k\in\Z$ by Proposition~\ref{aprop3+}.  Moreover, 
since $u\not= 0$ there exists $k\in\Z$ such that $\u_k\not=0$. Therefore, $\lambda$ is an eigenvalue of $\Hk_k$.
Conversely, if $\lambda\in\sigma(\Hk_k)$ for some $k\in\Z$  then $\lambda$ is an eigenvalue of $\Hk_k$. If
$\w$ is a corresponding eigenvector then $\lambda$ is an eigenvalue of $\H$ 
$$u=(\u_k)_{k\in\Z}=\begin{cases} \w & \text{if } l=k,\\ 0 & \text{if } l\not=k,\end{cases}$$
is a corresponding eigenvector. This also shows the equality \eqref{kernis} and the last
statement \eqref{kermult} is obvious.

\medskip
\noindent
(ii) The inclusion $\sigma(\H)\supset {S}$ follows from (i). Since $\sigma(\H)$ is closed we conclude  $\sigma(\H)\supset \overline{S}$. 
To prove the reverse inclusion assume that $\lambda\in \R\setminus\overline{S}$. Then there exists $\delta>0$ such that
\begin{gather*}
%\label{delta-dist}
\dist(\lambda,\sigma(\Hk_k))>\delta,\qquad k\in \Z,
\end{gather*}
and, in particular,
$\lambda$ belongs to the resolvent set of each operator $\Hk_k$.
Now pick some $f=(\mathbf{f}_k)_{k\in\N}\in\HS$ and consider $\u_k=(\Hk_k-\lambda\Id)^{-1}\mathbf{f}_k\in\dom(\Hk_k)\subset\dom(\hk_k)$. Then
$\|\u_k\|_{\HSk_k}\leq \delta^{-1} \|\mathbf{f}_k\|_{\HSk_k}$
and for $u=(\u_k)_{k\in\Z}$ one has $\Hk_k \u_k=\mathbf{f}_k + \lambda\u_k$ and 
\begin{equation*}
\begin{split}
\suml_{k\in\Z}|\hk_k[\u_k,\u_k]|&=\suml_{k\in\Z}\left|(\Hk_k\u_k,\u_k)_{\HSk_k}\right| \\
&=\suml_{k\in\Z}\left|(\mathbf{f}_k,\u_k)_{\HSk_k}+\lambda\|\u_k\|^2_{\HSk_k}\right|\\
&\leq \suml_{k\in\Z}  \left(\frac{1}{\delta}\|\mathbf f_k\|^2_{\HSk_k} + \frac{\vert\lambda\vert}{\delta^2}\|\mathbf f_k\|^2_{\HSk_k}\right) \\
&\leq \left(\frac{1}{\delta}+\frac{|\lambda|}{\delta^2}\right)\|f\|^2_{\HS}.
\end{split}
\end{equation*}
Thus $u\in\dom(\h)$; cf. \eqref{domsi2}. Furthermore, for $v=(\vv _k)_{k\in\Z}\in\dom(\h)$ a similar argument shows 
$$\h[u,v]=\suml_{k\in\Z}\hk_k[\u_k,\vv _k]=\suml_{k\in\Z}\left[(\mathbf{f}_k,\vv _k)_{\HSk_k}+\lambda(\u_k,\vv _k)_{\HSk_k}\right]
=(f+\lambda u,v)_{\HS},$$
and we conclude $u\in\dom(\H)$ and $\H u = f+\lambda u$ from the first representation theorem. Consequently, $\text{ran}(\H-\lambda\Id)=V$ and as $\H$ is self-adjoint this shows that 
$\lambda$ is in the resolvent set of $\H$. Therefore we conclude $\sigma(\H)\subset \overline{S}$.

\medskip
\noindent
(iii) Let $\lambda$ be an accumulation point of $S$. Then 
any open neighborhood of $\lambda$ contains infinitely many elements of $S$.
Therefore, either 
\begin{itemize}
\item [(a)] there is a sequence $(\lambda_l)_{l\in\N}$ such that $\lambda_l\in\sigma(\Hk_{k_l})_{l\in\N}$ with
$\lambda_l\not=\lambda$ and $\lambda_{l}\to \lambda$ as $l\to\infty$, or
\item [(b)] there exists an infinite set $K\subset\Z$ such
$\lambda\in\sigma(\Hk_k)$ for $k\in K$.
\end{itemize}
Using (i) we conclude in the case (a) that each punctured neighborhood of $\lambda$ contains an eigenvalue of $\H$, or
in the case (b) $\lambda$ is an eigenvalue of $\sigma(\H)$ with $\dim(\ker(\H-\lambda\Id))=\infty$.
In both situations we have $\lambda\in\sigma_\ess(\H)$.

Conversely, we have $\sigma_\ess(H)=\sigma(\H)\setminus\sigma_\disc(\H)=\overline{S}\setminus\sigma_\disc(\H)$ by (ii).
One concludes from (i) that the set $\sigma_\disc(\H)$ consists of those $\lambda \in S$ which are isolated and satisfy $$\#\left\{(j,k)\in\N\times\Z:\ s_{jk}=\lambda\right\}<\infty.$$
Now, if $\lambda\in\sigma_\ess(H)$ then it follows that $\lambda\in \overline{ S}$ but $\lambda$ is not isolated or $$\#\left\{(j,k)\in\N\times\Z:\ s_{jk}=\lambda\right\}=\infty.$$
In both cases we conclude that $\lambda$ is an accumulation point of $S$.
\end{proof}

\section*{Acknowledgment\label{sec:ack}}

This research was started when A.K. was a postdoctoral researcher at Graz University of Technology; he gratefully acknowledges financial support of the Austrian Science Fund (FWF) through the project M~2310-N32. 
The work of A.K. is also partly supported by the Czech Science Foundation (GA\v{C}R) through the project 21-07129S. 
J.B. gratefully acknowledges financial support by the Austrian Science Fund (FWF): P 33568-N.
    	This publication is based upon work from COST Action CA 18232 
MAT-DYN-NET, supported by COST (European Cooperation in Science and 
Technology), www.cost.eu.

\end{document}